\newtheorem{thm}{Theorem}[section]
\newtheorem{lem}[thm]{Lemma}
\newtheorem{defi}[thm]{Definition}
\newtheorem{prop}[thm]{Proposition}
\newtheorem{coro}[thm]{Corollary}
\theoremstyle{remark}
\newtheorem{rmk}[thm]{Remark}
\newcommand{\define}[1]{\textbf{#1}}%defined terms are in boldface
\DeclareMathOperator\inv{inv}
\DeclareMathOperator\id{id}
\DeclareMathOperator\Ad{Ad}
\DeclareMathOperator\ad{ad}
\DeclareMathOperator\Der{Der}
\DeclareMathOperator\Hom{Hom}
\DeclareMathOperator{\CE}{CE}
\DeclareMathOperator\proj{pr}
\DeclareMathOperator\eval{eval}%evaluation
\newcommand{\smoothF}{\mathcal{C}^{\infty}}%smooth functions
\newcommand{\gradedF}{\mathcal{C}}%graded functions
\newcommand{\vf}{\mathfrak{X}}%vector fields
\newcommand{\linvvf}{\mathfrak{X}^{\mathrm{left}}}%left-invariant vector fields
\newcommand{\multvf}{\mathfrak{X}^{\mathrm{mult}}}%multiplicative vector fields
\newcommand{\cat}[1]{{\tt #1}}%categories are in \tt
\newcommand{\conv}{\mathbin{\star}}%convolution product
\newcommand{\compTens}{\mathbin{\widehat{\otimes}}}%completed tensor product
\newcommand{\longisomto}{\stackrel{\sim}{\longrightarrow}}
\newcommand{\mathfrakg}{\mathfrak{g}}
\newcommand{\mathfrakh}{\mathfrak{h}}
\newcommand{\mathfrakU}{\mathfrak{U}}
\newcommand{\calC}{\mathcal{C}}
\newcommand{\calG}{\mathcal{G}}
\newcommand{\calL}{\mathcal{L}}
\newcommand{\calM}{\mathcal{M}}
\newcommand{\calN}{\mathcal{N}}
\newcommand{\calO}{\mathcal{O}}
\newcommand{\NN}{\mathbb{N}}
\newcommand{\ZZ}{\mathbb{Z}}
\newcommand{\RR}{\mathbb{R}}
\newcommand{\CCE}{C_{\mathrm{CE}}}%Chevalley--Eilenberg complex
\DeclareMathOperator{\grsym}{\mathrm{S}}%graded symmetric algebra
\newcommand{\bee}{\begin{enumerate}}
\newcommand{\eee}{\end{enumerate}}
\newcommand{\benn}{\begin{equation*}}
\newcommand{\eenn}{\end{equation*}}
\newcommand{\be}{\begin{equation}}
\newcommand{\ee}{\end{equation}}
\newcommand{\bean}{\begin{eqnarray}}
\newcommand{\eean}{\end{eqnarray}}
\newcommand{\bea}{\begin{eqnarray*}}
\newcommand{\eea}{\end{eqnarray*}}
\newcommand{\N}{\mathbb{N}}
\newcommand{\Z}{\mathbb{Z}}
\newcommand{\R}{\mathbb{R}}
\newcommand{\C}{\mathbb{C}}
\newcommand{\op}[1]{\!\!\mathop{\rm ~#1}\nolimits}
\mathchardef\za="710B  %\alpha
\mathchardef\zb="710C  %\beta
\mathchardef\zg="710D  %\gamma
\mathchardef\zd="710E  %\delta
\mathchardef\zve="710F %\epsilon
\mathchardef\zz="7110  %\zeta
\mathchardef\zh="7111  %\eta
\mathchardef\zy="7112 %\theta
\mathchardef\zi="7113  %\iota
\mathchardef\zk="7114  %\kappa
\mathchardef\zl="7115  %\lambda
\mathchardef\zm="7116  %\mu
\mathchardef\zn="7117  %\nu
\mathchardef\zx="7118  %\xi
\mathchardef\zp="7119  %\pi
\mathchardef\zr="711A  %\rho
\mathchardef\zs="711B  %\sigma
\mathchardef\zt="711C  %\tau
\mathchardef\zu="711D  %\upsilon
\mathchardef\zf="711E %\phi
\mathchardef\zq="711F  %\chi
\mathchardef\zc="7120  %\psi
\mathchardef\zw="7121  %\omega
\mathchardef\ze="7122  %\varepsilon
\mathchardef\zvy="7123  %\vartheta
\mathchardef\zvw="7124  %\varomega
\mathchardef\zvr="7125 %\varrho
\mathchardef\zvs="7126 %\varsigma
\mathchardef\zvf="7127  %\varphi
\mathchardef\zG="7000  %\Gamma
\mathchardef\zD="7001  %\Delta
\mathchardef\zY="7002  %\Theta
\mathchardef\zL="7003  %\Lambda
\mathchardef\zX="7004  %\Xi
\mathchardef\zP="7005  %\Pi
\mathchardef\zS="7006  %\Sigma
\mathchardef\zU="7007  %\Upsilon
\mathchardef\zF="7008  %\Phi
\mathchardef\zW="700A  %\Omega
\newcommand{\cyclic}{\mathop{\kern0.9ex{{+}
\kern-2.15ex\raise-.25ex\hbox{\Large\hbox{$\circlearrowright$}}}}\limits}
\newcommand{\bm}[1]{{\bf #1}}
\tikzset{join/.code=\tikzset{after node path={%
\ifx\tikzchainprevious\pgfutil@empty\else(\tikzchainprevious)%
edge[every join]#1(\tikzchaincurrent)\fi}}}
\tikzset{>=stealth',every on chain/.append style={join},
         every join/.style={->}}
\tikzset{
    %Define standard arrow tip
    >=stealth',
    %Define style for boxes
    punkt/.style={
           rectangle,
           rounded corners,
           draw=black, very thick,
           text width=6.5em,
           minimum height=2em,
           text centered},
    % Define arrow style
    pil/.style={
           ->,
           thick,
           shorten <=2pt,
           shorten >=2pt,}
}
\title{Differential graded Lie groups\\ and their\\ differential graded Lie algebras}
\date{\today}
\author{Benoit Jubin\thanks{benoit.jubin@imj-prg.fr}}
\affil{Institut de Math\'ematiques de Jussieu - Paris Rive Gauche, \protect\\
 4 place Jussieu,
B.C. 247,
75252 Paris Cedex 5, France}
\author{Alexei Kotov\thanks{oleksii.kotov@uhk.cz}}
\affil{Faculty of Science, University of Hradec Kralove,
 Rokitanskeho 62, \protect\\
Hradec Kralove 50003, Czech Republic}
\author{Norbert Poncin\thanks{norbert.poncin@uni.lu}}
\affil{RMATH, FSTC,
Universit\'e du Luxembourg,
Maison du Nombre \protect\\
6, Avenue de la Fonte,
L-4364 Esch-sur-Alzette, Luxembourg}
\author{Vladimir Salnikov\thanks{vladimir.salnikov@univ-lr.fr}}
\affil{LaSIE  -- CNRS \&  La Rochelle University, \protect\\
Av. Michel Cr\'epeau, 17042 La Rochelle Cedex 1, France}
\begin{document}

\maketitle

\begin{abstract}
In this paper we discuss the question of integrating differential graded Lie algebras ({\small DGLA}) to differential graded Lie groups ({\small DGLG}).

We first recall the classical problem of integration in the context, and present the construction for (non-graded) differential Lie algebras.
Then, we define the category of differential graded Lie groups and study its properties.
We show how to associate a differential graded Lie algebra to every differential graded Lie group and vice-versa. For the {\small DGLA} $\to$ {\small DGLG} direction,  the main ``tools'' are graded Hopf algebras and Harish-Chandra  pairs ({\small HCP}) -- we define the category of graded and differential graded {\small HCP}s and explain how those are related to the desired construction.
We describe some near at hand examples and mention possible generalizations.

\end{abstract}

\newpage

\tableofcontents

\setlength{\parskip}{3mm}

\newpage
%%%%%%%%%%%%%%%%%%%%%%%%%%%%%%%%%%%%%%%%%%%%%%%%%%%%%%%%%%%%%
\section{Introduction}

Any finite-dimensional real Lie algebra can be integrated to a unique simply connected Lie group. This theorem of Lie and Cartan triggered a whole series of works. \\
(i) The result is false in infinite dimension (see \cite{vanKort}), but true locally in the Banach case. Recently, C.~Wockel and C.~Zhu (\cite{wockZhu}) integrated a large class of infinite-dimensional Lie algebras to {\'e}tale Lie 2-groups. \\
(ii) S.~Covez showed (\cite{covez}) that Leibniz algebras can be integrated locally to local (pointed, augmented) Lie racks. \\
(iii) M.~Crainic and R.~L.~Fernandes (\cite{craiFer}) found the obstruction to the integrability of Lie algebroids in terms of their monodromy groups, and integrated the integrable ones to unique source-simply-connected Lie groupoids (see also \cite{catFel}, \cite{severa}). H.-H.~Tseng and C.~Zhu (\cite{tsengZhu}) integrated all Lie algebroids to stacky Lie groupoids (see also \cite{wein}). \\
(iv) As for vertical categorification and homotopification, $L_\infty$-algebras were integrated by E.~Getzler (\cite{Get09}) in the nilpotent case and by A.~Henriques (\cite{henriques}) in the general case. In Getzler's approach, the integrating object is a simplicial subset of the set of Maurer-Cartan elements of the algebra. In good cases, it is a higher groupoid generalizing the Deligne groupoid of a {\small DGLA}. Recently, Y.~Sheng and C.~Zhu (\cite{shengZhu}) gave a more explicit integration for strict Lie 2-algebras (and their morphisms); their integration is Morita equivalent to Getzler's and Henriques'.

This text is the first of a series of papers, in which we intend to suggest an integration technique for infinity algebras and their morphisms, which is based on homotopy transfer and leads to concrete and explicit integrating objects. More precisely, in Getzler's work \cite{Get09}, the integrating simplicial set $\zg_\bullet(L)$ of a nilpotent Lie infinity algebra $L$ is homotopy equivalent to the Kan complex $\op{MC}_\bullet(L):=\op{MC}(L\otimes \zW_\bullet)$ whose $n$-simplices are the Maurer-Cartan elements of the homotopy Lie algebra obtained by tensoring $L$ with the {\small DGCA} $\zW_n:=\zW(\zD^n)$ of polynomial differential forms of the standard $n$-simplex, see also \cite{KPQ14}. If $L$ is concentrated in degrees $k\ge -\ell$ (resp., $-\ell\le k\le 0$), the integrating $\zg_\bullet(L)$ is a weak $\ell$-groupoid (resp., $\ell$-group). Our objective is to integrate a Lie infinity algebra by a kind of $A$-infinity group. As we have in mind homotopy transfer, the first goal is to integrate a differential graded Lie algebra ({\small DGLA}) by a differential graded Lie group ({\small DGLG}), which is the subject of the current paper.
 Surprisingly it turned out that this task hides more interesting details than expected.
Already the very definition of a {\small DGLG} is not entirely obvious. The present paper is a rigorous approach to this integration problem.

It is worth mentioning that differential graded Lie groups naturally appear in the context of characteristic classes (\cites{KS07,Kotov2018,Salerno2010} which in turn have interesting applications  in gauge theory \cites{SalStr13,KSS14, Sal15}); this motivated two of the authors to look at this subject in more details.

\newpage
{\bf Organization}. The article is organized as follows. \\
The next section (\ref{IntNGDLA}) addresses the integration problem in the case of classical (non-graded) differential Lie groups and algebras.
It is already presented in the way  suitable for generalization. In sections (\ref{GLG}) and (\ref{DGLG}) the construction is extended to the graded case, namely the graded Lie groups/algebras and differential graded Lie groups/algebras are defined.
Section (\ref{HC}) is the core of the paper where the relation between  {\small DGLG}s and {\small DGLA}s is discussed. The main ``tool'' introduced  there is graded and differential graded Harish-Chandra pairs ({\small HCP}). The key result is given by the two theorems
(\ref{thm:main}) and (\ref{thm:main2}) about equivalences of categories, establishing the
relations {\small GLG} $\leftrightarrow$ {\small GHCP} $\leftrightarrow$ {\small GLA} and {\small DGLG} $\leftrightarrow$ {\small DGHCP} $\leftrightarrow$ {\small DGLA} respectively.

{\bf Conventions}. Manifolds are second countable Hausdorff, finite-dimensional, and real. (Super) manifolds are smooth and finite-dimensional, maps between them, vector fields are smooth. (Super) Lie algebras are finite-dimensional and real, $\Z$-graded Lie algebras have finite-dimensional homogeneous components and are non-negatively (or non-positively) graded, unless the contrary is stated.

\bigskip
\newpage

\section{Integration of a (non-graded) differential Lie algebra}\label{IntNGDLA}

A {\small DGLA} is a {\small GLA} endowed with a square 0 degree 1 derivation. In the non-graded case, the concept reduces to a {\small LA} $\mathfrak{g}$ together with a derivation $\zd\in\op{Der}(\mathfrak g)$. On the global side, a {\small DGLG} is a group object in the category of differential graded manifolds. Here the word ``graded'' is (by a little abuse)
employed in contrast to ``super''.
For the $\Z$- (resp., $\N$) graded case
those are called  $\Z Q$- (resp., $\N Q$-) manifolds, i.e. $\Z$- ($\N$-)
manifolds equipped with a homological vector field $Q$, that is, a degree 1 derivation of the function algebra that Lie commutes with itself. We will give details for all of these concepts in
sections \ref{GLG} and \ref{DGLG}.
If we forget the grading, we deal with a group object in the category of manifolds equipped with a vector field. Such an object is a Lie group with a selected vector field that is compatible with the group maps, or, as we will see, a Lie group $G$ endowed with a multiplicative vector field $X\in\mathfrak{X}^{\op{mult}}(G)$. We thus have to show that differentiation and integration allow to pass from a non-graded differential Lie group $(G,X)$ ({\small DLG}) to a non-graded differential Lie algebra $(\mathfrak{g},\zd)$ ({\small DLA}) and vice versa.

\subsection{Multiplicative vector fields on Lie groups}

The goal of this subsection is to define multiplicative vector fields on a Lie semigroup (vector fields on a manifold that are compatible with the multiplication), and to show that, when defined on a Lie monoid (resp., a Lie group), they are automatically compatible with the unit (resp., with the inversion). More abstractly, Lie semigroups (resp., Lie monoids, Lie groups) endowed with a multiplicative vector field will turn out to be exactly semigroup (resp., monoid, group) objects in the category of manifolds endowed with a selected vector field.

We denote by $\tt MVF$ the category of manifolds $M,N,\ldots$ endowed with a distinguished vector field $X,Y,\ldots$ The morphisms $f:(M,X)\to (N,Y)$ of this category are the (smooth) maps $f:M\to N$ that relate $X$ and $Y$. Let us recall that $X$ is $f$-related to $Y$ -- we write $X \sim_f Y$ -- if $$Y \circ f = Tf \circ X\;.$$ Clearly $X \sim_{\id_M} X$ and, if $X \sim_f Y$ and $Y \sim_g Z$, then $X \sim_{g \circ f} Z$; hence, manifolds with a chosen vector field and maps relating them do form a category.

Semigroup, monoid, or group objects can be defined in cartesian categories $\tt C$, i.e. categories with finite products $\times$. Such a category admits a terminal object $\{\ast\}$ (indeed, since products are limits, i.e. universal cones, it is easily understood that the product of the empty family is terminal). A cartesian category $({\tt C},\times,\{\ast\})$ is thus canonically monoidal. A monoidal category of this type is called {\it cartesian monoidal}. The category $\tt MVF$ is cartesian monoidal, with product $$(M,X) \times (N,Y) = (M \times N, (X,Y))$$ and terminal object $(\{\ast\},0)$.

We are now prepared to define multiplicative vector fields on a Lie semigroup $G$. The space of vector fields (resp., multiplicative vector fields) on a manifold $G$ (resp., semigroup $G$) will be denoted by $\mathfrak{X}(G)$ (resp., $\multvf(G)$).

\begin{defi}%[Multiplicative vector field]
\label{defi:multvf}
A \define{multiplicative vector field} on a Lie semigroup $G$ is a vector field $X\in\mathfrak{X}(G)$ that is compatible with the multiplication $m:G\times G\to G$:
\begin{equation}
\label{eq:multvf1}
\multvf(G) = \left\{
X \in \vf(G):
(X,X) \sim_m X
\right\}\;.
\end{equation}
\end{defi}

Let $L_g$ and $R_g$, $g\in G$, be as usual the left and right multiplications by $g$. As well-known, the tangent map of $m$ is given by $$T_{(g,h)}m (v,w) = T_hL_g(w) + T_gR_h(v)\;,$$ if $g,h\in G$, $v \in T_gG$ and $w \in T_hG$. Therefore, $(X,X) \sim_m X$ reads
\begin{equation}
\label{eq:multvf}
X_{gh} = T_hL_g(X_h) + T_gR_h(X_g).
\end{equation}

If $G$ is a Lie monoid, its unit can be seen as a morphism $e \colon \{\ast\} \to G$. Hence, the compatibility condition of a vector field $X\in\mathfrak{X}(G)$ with the unit reads $0 \sim_e X$, i.e. $X_e = 0$.
If $G$ is a Lie group and $\inv \colon G \to G$ its inversion, the compatibility condition of $X$ with $\inv$ is $X \sim_{\inv} X$. The tangent map of $\inv$ is given by
$$T_g\inv = - T_eL_{g^{-1}} \circ T_g R_{g^{-1}},\;\text{or, symbolically, by}\;\, T_g\inv(v) = - g^{-1}\cdot v\cdot g^{-1},\; \text{if}\,\; v \in T_gG$$ (it suffices to differentiate the identity $m\circ(\id\times \inv)\circ\zD=e$, where $\zD$ is the diagonal map $\zD:G\ni g\mapsto (g,g)\in G\times G$ and $e$ the constant map $e:G\ni g\mapsto e\in G$). Hence, the compatibility condition $X\sim_{\inv}X$ reads $$X_{g^{-1}}=- T_eL_{g^{-1}}(T_g R_{g^{-1}}(X_g))\;,$$ for all $g\in G$.

\begin{prop}
\label{prop:multComp}
A multiplicative vector field on a Lie monoid (resp., Lie group) is compatible with the unit (resp., the inversion).
\end{prop}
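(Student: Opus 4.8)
The plan is to derive both statements by evaluating the multiplicativity identity \eqref{eq:multvf} at carefully chosen pairs of points. No differential-geometric input beyond the tangent map of $m$ already recorded above is needed; everything reduces to linear algebra in the relevant tangent spaces.

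First I would treat the monoid case. Setting $g=h=e$ in \eqref{eq:multvf} and using that $L_e=R_e=\id_G$, so that $T_eL_e=T_eR_e=\id$ on $T_eG$, the identity collapses to $X_e=X_e+X_e$. Subtracting one copy of $X_e$ gives $X_e=0$, which is precisely the asserted compatibility $0\sim_e X$ with the unit.

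Next, for the group case, I would first invoke the monoid result (a group being in particular a monoid) to record that $X_e=0$. Then I substitute $h=g^{-1}$ into \eqref{eq:multvf}, so that $gh=e$ and the left-hand side $X_e$ vanishes. This yields
\begin{equation*}
0 = T_{g^{-1}}L_g(X_{g^{-1}}) + T_gR_{g^{-1}}(X_g),
\end{equation*}
hence $T_{g^{-1}}L_g(X_{g^{-1}}) = -\,T_gR_{g^{-1}}(X_g)$. The final step is to invert the linear isomorphism $T_{g^{-1}}L_g$. Since $L_{g^{-1}}\circ L_g=\id_G$ and $L_g(g^{-1})=e$, differentiating at $g^{-1}$ shows that $T_eL_{g^{-1}}$ is its inverse; applying it to both sides produces
\begin{equation*}
X_{g^{-1}} = -\,T_eL_{g^{-1}}\bigl(T_gR_{g^{-1}}(X_g)\bigr),
\end{equation*}
which is exactly the compatibility condition $X\sim_{\inv}X$ recorded in the text.

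The computation is entirely mechanical, so there is no serious obstacle; the only point requiring a little care is the tangent-map bookkeeping in the last step, namely correctly identifying $T_eL_{g^{-1}}$ as the inverse of $T_{g^{-1}}L_g$ rather than confusing it with a tangent map of $R$ or of $\inv$. Once the base point of each tangent map is tracked, the result follows immediately.
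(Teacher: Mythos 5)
Your proof is correct and follows essentially the same route as the paper: specialize the multiplicativity identity \eqref{eq:multvf} to obtain $X_e=0$, then set $h=g^{-1}$ and solve for $X_{g^{-1}}$. The only (harmless) variations are that you take $g=h=e$ where the paper takes $h=e$ and invokes the invertibility of $T_eL_g$ --- your choice is in fact slightly safer in the monoid case, where $L_g$ need not be a diffeomorphism --- and that you spell out the final application of $T_eL_{g^{-1}}=\bigl(T_{g^{-1}}L_g\bigr)^{-1}$, which the paper leaves implicit.
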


\begin{proof}
Setting $h = e$ in Equation~\eqref{eq:multvf} gives $X_g = T_eL_g(X_e) + X_g$, and since $T_eL_g$ is an isomorphism, this implies $X_e = 0$.
Setting $h = g^{-1}$ in Equation~\eqref{eq:multvf} gives $0 = X_e = T_{g^{-1}}L_g(X_{g^{-1}}) + T_gR_{g^{-1}}(X_g)$, that is $X \sim_{\inv} X$.
\end{proof}

%Rk: the notion of differential object in a category with translation (or suspension) ``works'' only on the algebra side, not on the manifold side

%http://ncatlab.org/nlab/show/cartesian+monoidal+category
%in a cartesian monoidal caregory, each object has a diagonal embedding and an augmentation, so any object is a comonoid
%then cartesian functor = strong monoidal functor

%http://ncatlab.org/nlab/show/semicartesian+monoidal+category
%e.g. Poisson manifolds

\begin{coro}
The category of Lie semigroups (resp., monoids, groups) endowed with a multiplicative vector field (and morphisms relating them) is isomorphic to the category of semigroup (resp., monoid, group) objects in the category $\tt MVF$.
\end{coro}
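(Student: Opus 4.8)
The plan is to construct, in each of the three cases, mutually inverse functors that are the identity on underlying smooth maps. The organising fact is that the forgetful functor $U\colon{\tt MVF}\to{\tt Man}$, $(M,X)\mapsto M$, to the category $\tt Man$ of manifolds is strict monoidal for the two cartesian monoidal structures: by the description of products recalled above it sends $(M,X)\times(N,Y)$ to $M\times N$ and $(\{\ast\},0)$ to $\{\ast\}$. Preserving products and the terminal object, $U$ carries any semigroup (resp., monoid, group) object of $\tt MVF$ to one of $\tt Man$, i.e. to a Lie semigroup (resp., monoid, group); and being faithful and product-preserving, $U$ reflects the commutativity of the structure diagrams, so the associativity, unit, and inverse axioms hold in $\tt MVF$ exactly when they hold for the underlying Lie semigroup/monoid/group. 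No extra constraint is thereby imposed.

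First I would treat semigroups. A semigroup object of $\tt MVF$ is a pair $(G,X)$ equipped with an associative morphism $m\colon(G,X)\times(G,X)\to(G,X)$ in $\tt MVF$; applying $U$ produces a Lie semigroup $(G,m)$, and the sole additional datum is that $m$ be a morphism in $\tt MVF$, which reads $(X,X)\sim_m X$ — precisely the condition $X\in\multvf(G)$ of Definition~\ref{defi:multvf}. On arrows, a morphism of semigroup objects is an $\tt MVF$-morphism $f\colon(G,X)\to(H,Y)$ commuting with the multiplications, i.e. a smooth semigroup homomorphism with $X\sim_f Y$ — exactly a morphism of Lie semigroups with multiplicative vector field. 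This yields the isomorphism in the semigroup case; since both the object and the arrow assignments are the identity on underlying maps, functoriality and the identity-composite property are immediate.

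The monoid and group cases add only the compatibility of the unit and the inversion with the vector field, and this is where Proposition~\ref{prop:multComp} is decisive. A monoid object of $\tt MVF$ comes with a unit morphism $e\colon(\{\ast\},0)\to(G,X)$, whose underlying map is the unit of $G$ and whose $\tt MVF$-morphism property is $0\sim_e X$, i.e. $X_e=0$; a group object comes in addition with an inversion morphism $\inv\colon(G,X)\to(G,X)$ in $\tt MVF$, i.e. $X\sim_{\inv}X$. For the converse one must check that, starting from a Lie monoid (resp., group) with a multiplicative $X$, the unit and the inversion — which are already uniquely determined on the underlying Lie monoid/group — lift to morphisms in $\tt MVF$. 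This is exactly the assertion $X_e=0$ and $X\sim_{\inv}X$ supplied by Proposition~\ref{prop:multComp}. The matching of arrows is as in the semigroup case, now with monoid (resp., group) homomorphisms relating the vector fields on both sides. I expect the assertion that $e$ and $\inv$ lift to $\tt MVF$ to be the one genuinely non-formal step; everything else is bookkeeping around the strict monoidality and faithfulness of $U$.
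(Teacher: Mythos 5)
Your proposal is correct and follows essentially the same route as the paper, which states the corollary as an immediate consequence of Proposition~\ref{prop:multComp} and unpacks it in the subsequent paragraph: a semigroup/monoid/group object in $\tt MVF$ is, after forgetting to $\tt Man$, a Lie semigroup/monoid/group whose structure maps relate the vector fields, and the only non-formal point is that the unit and inversion automatically lift, which is exactly what Proposition~\ref{prop:multComp} provides. Your explicit framing via the strict monoidal, faithful forgetful functor $U\colon{\tt MVF}\to{\tt Man}$ is a tidier bookkeeping of the same argument, not a different proof.
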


For instance, a monoid object in $\tt MVF$ is an object $(G,X)\in\tt MVF$, i.e. a manifold $G$ and a vector field $X\in\mathfrak{X}(G)$, endowed with a monoidal structure, i.e. a morphism $m:(G,X)\times (G,X)\to (G,X)$ and a morphism $e:(\{\ast\},0)\to (G,X)$ (that verify the usual associativity and unitality conditions). In view of what has been said above, this is exactly a Lie monoid endowed with a multiplicative vector field.

\subsection{Van Est isomorphism}

In the following, we assume that $G$ is a Lie group with Lie algebra $\mathfrak{g}$ and unit $e$. In view of the isomorphisms $$ T_gR_{g^{-1}}:T_gG\to \mathfrak{g}\;,$$ $g\in G$, a vector field $$X:G\ni g\mapsto X_g\in T_gG\subset TG$$ can be interpreted as a smooth map $$\xi:G\ni g\mapsto \xi(g):=X_g\cdot g^{-1}:=T_gR_{g^{-1}}(X_g)\in\mathfrak{g}\;.$$ It turns out that $X$ is multiplicative if and only if $\xi$ is a 1-cocycle of $G$ valued in the adjoint representation $\op{Ad}:G\to \op{Aut}(\mathfrak g$). The tangent map $T_e\xi\in \op{End}(\mathfrak{g})$ of this Lie group 1-cocycle is a Lie algebra 1-cocycle of $\mathfrak g$ valued in the adjoint representation $\op{ad}:\mathfrak{g}\to \op{Der}(\mathfrak{g})$. Conversely, any Lie algebra 1-cocycle is obtained as the tangent map of a unique Lie group 1-cocycle. This result is known as the van Est isomorphism. However, to our knowledge, no simple proof can be found in the literature.

In the present subsection, we detail the results summarized in the preceding paragraph. For the sake of completeness, we recall the definitions of (smooth) Lie group cohomology and Lie algebra cohomology in Appendix \ref{AppCoho}.

\subsubsection*{Isomorphism between multiplicative vector field and group 1-cocycles}

For any $g\in G$, set $$\zw^R_g:=T_gR_{g^{-1}}=(T_eR_g)^{-1}\in\op{Iso}(T_gG,\mathfrak{g})\subset T_g^*G\otimes\mathfrak{g}\;.$$ The map $\zw^R$ is a $\mathfrak g$-valued $1$-form on $G$, known as the right Maurer-Cartan form of $G$ (the right-invariant $\mathfrak g$-valued 1-form on $G$ equal to identity at $e$). It can of course be viewed as a $\smoothF(G)$-linear map $$\zw^R:\mathfrak{X}(G)\longisomto \smoothF(G,\mathfrak g)=: C^1_{\op{sm}}(G,\Ad)$$ valued in the $1$-cochains of the smooth cohomology of $G$ endowed with its adjoint representation; it is clear that $\zw^R$ is a $\smoothF(G)$-module and an $\R$-vector space isomorphism with inverse implemented by the $(\zw^R_g)^{-1}$, $g\in G$.

\begin{prop} \label{prop:MCiso}
The isomorphism $\zw^R$ restricts to an $\R$-vector space isomorphism
\begin{equation}
\omega^R \colon \multvf(G) \longisomto Z_{\op{sm}}^1(G,\Ad)
\end{equation} between the space of multiplicative vector fields of $G$ and the space of smooth $1$-cocycles of $(G,\Ad)$.
\end{prop}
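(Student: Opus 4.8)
The plan is to leverage that $\omega^R$ is already an $\R$-linear isomorphism $\mathfrak{X}(G)\longisomto C^1_{\op{sm}}(G,\Ad)$, so that it suffices to prove that a vector field $X$ is multiplicative exactly when the associated cochain $\xi:=\omega^R(X)$, with $\xi(g)=T_gR_{g^{-1}}(X_g)$, satisfies the smooth $1$-cocycle condition $\xi(gh)=\xi(g)+\Ad_g\xi(h)$ recorded in Appendix~\ref{AppCoho}. Because each manipulation below is obtained by applying the \emph{linear isomorphism} $\omega^R_{gh}=T_{gh}R_{(gh)^{-1}}$ to an identity, the resulting equivalence will be reversible, yielding at once $\omega^R(\multvf(G))\subseteq Z^1_{\op{sm}}(G,\Ad)$ and the reverse inclusion.

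First I would apply $\omega^R_{gh}=T_{gh}R_{(gh)^{-1}}$ to both sides of the multiplicativity identity \eqref{eq:multvf}, $X_{gh}=T_hL_g(X_h)+T_gR_h(X_g)$. The left-hand side becomes $\xi(gh)$, and the problem reduces to computing the images of the two summands on the right.

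For the summand $T_gR_h(X_g)$ I would use the identity $R_{(gh)^{-1}}\circ R_h=R_{g^{-1}}$, which by the chain rule gives $T_{gh}R_{(gh)^{-1}}\circ T_gR_h=T_gR_{g^{-1}}$; applied to $X_g$ this produces precisely $\xi(g)$. For the summand $T_hL_g(X_h)$ the key is the relation $R_{(gh)^{-1}}\circ L_g=C_g\circ R_{h^{-1}}$, where $C_g(x)=gxg^{-1}$ denotes conjugation, which follows from $gx(gh)^{-1}=g(xh^{-1})g^{-1}$. Differentiating at $h$, using $R_{h^{-1}}(h)=e$ and $\Ad_g=T_eC_g$, gives $T_{gh}R_{(gh)^{-1}}\circ T_hL_g=\Ad_g\circ T_hR_{h^{-1}}$, so this summand is sent to $\Ad_g\xi(h)$. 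Summing the two contributions yields exactly $\xi(gh)=\xi(g)+\Ad_g\xi(h)$, and reading the chain of equalities in reverse recovers \eqref{eq:multvf} from the cocycle condition.

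I do not anticipate a real obstacle: the statement collapses to a short computation with the tangent maps of left and right translations. The only points demanding care are the bookkeeping of the chain rule for the composite translations $R_{(gh)^{-1}}\circ R_h$ and $R_{(gh)^{-1}}\circ L_g$, and the matching of the precise sign convention for the coboundary on smooth $1$-cochains fixed in the appendix; with that convention the two summands of \eqref{eq:multvf} correspond term by term to the two summands of the cocycle identity, which is what makes the equivalence transparent.
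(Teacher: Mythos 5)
Your proposal is correct and takes essentially the same route as the paper's proof: both apply the isomorphism $\omega^R_{gh}=T_{gh}R_{(gh)^{-1}}$ to the multiplicativity identity \eqref{eq:multvf}, with the right-translation summand collapsing to $\xi(g)$, the left-translation summand to $\Ad_g\xi(h)$, and the invertibility of $\omega^R$ supplying the converse. Your conjugation identity $R_{(gh)^{-1}}\circ L_g=C_g\circ R_{h^{-1}}$ together with $\Ad_g=T_eC_g$ is just a repackaging of the paper's commutation of $L_g$ with $R_{h^{-1}}$ combined with the appendix's factorization $\Ad_g=T_gR_{g^{-1}}\circ T_eL_g$.
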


\begin{proof} It suffices to show that if $X$ satisfies the multiplicativity condition (\ref{eq:multvf}), i.e. if $$X_{gh} = T_hL_g(X_h) + T_gR_h(X_g)\;,$$ then $\xi:=\zw^R(X)$ satisfies the 1-cocycle condition (\ref{eq:cocycle}), i.e. $$\xi(gh)=\Ad_g(\xi(h))+\xi(g)$$ (and vice versa). Let us prove this implication: $$\xi(gh)=\zw_{gh}^R(X_{gh})=(T_eR_g)^{-1}(T_gR_h)^{-1}(T_hL_g(X_h)+T_gR_h(X_g))$$ $$=(T_eR_g)^{-1}(T_gR_h)^{-1}(T_hL_g)(X_h)+\xi(g)\;.$$ The first term of the {\small RHS} reads $$(T_gR_{g^{-1}})(T_{gh}R_{h^{-1}})(T_hL_g)(X_h)=(T_gR_{g^{-1}})(T_eL_g)(T_hR_{h^{-1}})(X_h)=\Ad_g(\xi(h))\;.$$ Hence the result.
\end{proof}

\subsubsection*{Isomorphism between group 1-cocycles and algebra 1-cocycles}

It is clear that $$T_e:C^1_{\op{sm}}(G,\Ad):=\smoothF(G,\mathfrak{g})\to \op{End}(\mathfrak{g})=:C^1_{\op{CE}}(\mathfrak{g},\ad)$$ is an $\R$-linear map from (smooth) 1-cochains of $G$ to Chevalley-Eilenberg $1$-cochains of $\mathfrak g$.

\begin{thm}[van Est isomorphism] If the Lie group $G$ is simply connected, the tangent map $T_e$ restricts to an $\R$-vector space isomorphism
\begin{equation}
T_e \colon Z^1_{\op{sm}}(G,\Ad) \longisomto \Der(\mathfrakg)=Z^1_{\op{CE}}(\mathfrak{g},\ad)
\end{equation} between group 1-cocycles of $G$ and algebra 1-cocycles of $\mathfrak g=T_eG$.
\end{thm}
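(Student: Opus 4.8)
The plan is to study the $\R$-linear map $T_e$ via the first-order differential consequences of the cocycle equation: well-definedness and injectivity come from differentiating the group cocycle identity, while surjectivity is an integration argument in which the simple connectedness of $G$ is the decisive hypothesis. I begin by recording the key differentiated identity. Given $\xi\in Z^1_{\op{sm}}(G,\Ad)$, put $D:=T_e\xi\in\End(\mathfrak g)$; fixing $g$ and differentiating $\xi(gh)=\Ad_g(\xi(h))+\xi(g)$ in $h$ at $h=e$ along $y\in\mathfrak g$ gives
\begin{equation}
\label{eq:vanEstStar}
(T_g\xi)\big(T_eL_g(y)\big)=\Ad_g\big(D(y)\big),\qquad g\in G,\ y\in\mathfrak g,
\end{equation}
which says that the $\mathfrak g$-valued function $y^{L}\xi$ (here $y^L$ is the left-invariant field with $y^L_e=y$) equals $g\mapsto\Ad_g(Dy)$. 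Applying $x^L$ and using $\tfrac{d}{dt}\big|_0\Ad_{g\exp(tx)}=\Ad_g\circ\ad_x$ yields $(x^Ly^L\xi)(g)=\Ad_g([x,Dy])$, so $([x^L,y^L]\xi)(g)=\Ad_g([x,Dy]-[y,Dx])$; since $[x^L,y^L]=[x,y]^L$, the left side is $\Ad_g(D[x,y])$ by \eqref{eq:vanEstStar}, and evaluating at $g=e$ gives $D[x,y]=[x,Dy]+[Dx,y]$. Thus $T_e$ lands in $\Der(\mathfrak g)=Z^1_{\op{CE}}(\mathfrak g,\ad)$. Injectivity is then immediate: if $D=0$ then \eqref{eq:vanEstStar} forces $T_g\xi=0$ for all $g$, so $\xi$ is constant on the connected group $G$ and equals $\xi(e)=0$ (only connectedness is used here).

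For surjectivity, \eqref{eq:vanEstStar} dictates exactly which $1$-form a preimage of $D\in\Der(\mathfrak g)$ must integrate. I would define the $\mathfrak g$-valued $1$-form $\theta$ on $G$ by $\theta_g(v):=\Ad_g\big(D(\zw^L_g(v))\big)$, where $\zw^L_g=T_gL_{g^{-1}}$ is the left Maurer--Cartan form, so that $\theta_e=D$ and $\theta_g(T_eL_g y)=\Ad_g(Dy)$. The first task is to show $d\theta=0$: a direct computation from the Maurer--Cartan structure equation $d\zw^L+\tfrac12[\zw^L,\zw^L]=0$ together with the derivation identity for $D$ gives closedness. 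This step is precisely the converse of the differentiation carried out above, and it is the only place where the algebraic cocycle property of $D$ is consumed.

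The substantial point is the passage from the closed form to a global function, and this is where I expect the main obstacle to sit. Since $G$ is simply connected, $H^1_{\mathrm{dR}}(G)=0$, so $\theta$ admits a primitive $\xi$ with $\xi(e)=0$, explicitly $\xi(g)=\int_e^g\theta$, the integral being path-independent exactly because $\pi_1(G)=0$; this topological input is the heart of the theorem. It then remains to check that $\xi$ is a cocycle: for fixed $g$ the maps $h\mapsto\xi(gh)$ and $h\mapsto\Ad_g(\xi(h))+\xi(g)$ agree at $h=e$ and, using $\zw^L_{gh}\circ T_hL_g=\zw^L_h$, have equal differentials, hence coincide on the connected $G$. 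Finally $T_e\xi=\theta_e=D$, so $T_e$ is surjective, completing the proof.

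As a conceptual check I would note that the statement is the restriction of Lie's second theorem to splittings: a $1$-cocycle $\xi$ is the same as a Lie group homomorphism $g\mapsto(\xi(g),g)$ from $G$ into the semidirect product $\mathfrak g\rtimes_{\Ad}G$ splitting the projection to $G$, while a derivation $D$ is the corresponding split Lie algebra homomorphism into $\mathfrak g\rtimes_{\ad}\mathfrak g=\mathrm{Lie}(\mathfrak g\rtimes_{\Ad}G)$, and differentiation of homomorphisms is bijective when $G$ is simply connected. This reproves the result but merely relocates the difficulty into Lie's second theorem, so for a self-contained argument I would keep the direct Maurer--Cartan construction above, which isolates the use of $\pi_1(G)=0$ transparently.
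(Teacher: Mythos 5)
Your proposal is correct and follows essentially the same route as the paper's: surjectivity is reduced to exactness of the closed $\mathfrak g$-valued $1$-form $\Ad_\bullet\circ D\circ\zw^L$ on the simply connected $G$ (the paper's Cauchy problem~(\ref{CP})), with the cocycle property of the primitive checked by the same equal-differentials-on-a-connected-group argument, and injectivity/uniqueness coming from the forced differential equation. The only cosmetic difference is in showing $T_e\xi\in\Der(\mathfrak g)$, where you bracket left-invariant vector fields via $[x^L,y^L]=[x,y]^L$ instead of differentiating the $\Ad$-conjugated identity at $e$ as the paper does; both amount to the same second differentiation of the cocycle equation, and your closedness step, though only sketched, is exactly the computation the paper writes out in~(\ref{Diff})--(\ref{Value}).
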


As mentioned above, we will prove this well-known result as we could not find any simple direct proof in the literature.

\begin{proof} We first show that $T_e$ transforms a group 1-cocycle into a derivation. Then we explain why the $\R$-linear map $$T_e:Z^1_{\op{sm}}(G,\Ad)\to \op{Der}(\mathfrak{g})$$ is actually a bijection.

Let $\xi\in Z^1_{\op{sm}}(G,\Ad)$. Then $\xi\in\smoothF(G,\mathfrak{g})$ and, in view of the 1-cocycle condition, we have $\xi(e)=0$, $$\xi\circ L_g=\Ad_g\circ \xi+\xi(g)\;\;\text{and}\;\;\xi\circ R_h=\Ad_\bullet(\xi(h))+\xi\;.$$ When taking the derivative at $e$, we get \be\label{DE}T_g\xi\circ T_eL_g=\Ad_g\circ T_e\xi\;\;\text{and}\;\;T_h\xi\circ T_eR_h=\ad_\bullet(\xi(h))+T_e\xi\;.\ee Hence, $$\Ad_g\circ T_e\xi = T_g\xi\circ T_eR_g\circ T_gR_{g^{-1}}\circ T_eL_g=(\ad_\bullet(\xi(g))+T_e\xi)\circ\Ad_g\;.$$ If we set $\zd:=T_e\xi$ and if $Y\in\mathfrak{g}$, this equation reads $$\Ad_g(\zd(Y))=[\Ad_g(Y),\xi(g)]+\zd(\Ad_g(Y))\;.$$ It now suffices to derive the last identity (equality of functions in $\smoothF(G,\mathfrak{g})$) at $e$, and to evaluate the resulting identity (equality of linear maps in $\op{End}(\mathfrak{g})$) at $X\in\mathfrak{g}$. Indeed, we then obtain $$[X,\zd(Y)]=[[X,Y],\xi(e)]+[Y,\zd(X)]+\zd([X,Y])\;,$$ which is the desired result as $\xi(e)=0$.

Let us come to the second part and prove that $T_e$ is a bijection, i.e. that for any $\zd\in\Der(\mathfrak g)$ there is a unique $\xi\in Z^1_{\op{sm}}(G,\Ad)$ such that $T_e\xi=\zd$. Note that, in view of (\ref{DE}), if $\xi$ exists, it is a solution $\xi\in\smoothF(G,\mathfrak g)$ of the Cauchy problem \be\label{CP}T_g\xi=\Ad_g\circ\;\zd\circ (T_eL_g)^{-1}\;\;\text{and}\;\;\xi(e)=0\;.\ee Conversely, if $\xi$ is a solution, then $T_e\xi =\zd$ and $\xi$ is a 1-cocycle.

As for the cocycle property, observe that the coboundary operator is defined by $$d\xi(g,-)=\Ad_g\circ \xi - \xi\circ L_g + \xi(g)\in\smoothF(G,\mathfrak g)$$ and that the derivative $T_h$ of this map is given by $$T_h(d\xi(g,-))=\Ad_g\circ T_h\xi - T_{gh}\xi\circ T_hL_g\;.$$ If $\xi$ is, as assumed above, a solution of (\ref{CP}), this derivative vanishes. Indeed, $$T_h(d\xi(g,-))=\Ad_g\circ \Ad_h\circ\;\zd\circ (T_eL_h)^{-1} - \Ad_{gh}\circ\;\zd\circ (T_eL_{gh})^{-1}\circ T_hL_g=0\;,$$ since $\Ad$ is a group homomorphism. Since $G$ is connected, simply connected and $d\xi(g,e)=0$, it follows that $d\xi(g,h)=0$, for all $g,h\in G$.

It now suffices to show that (\ref{CP}) has a unique (global) solution $\xi\in\smoothF(G,\mathfrak g)$.

Just as the right Maurer-Cartan form $\zw^R$ is defined by $\zw^R_g=(T_eR_g)^{-1}\in \op{Iso}(T_gG,\mathfrak g)$, the left Maurer-Cartan form $\zw^L$ is given by $\zw^L_g=(T_eL_g)^{-1}\in\op{Iso}(T_gG,\mathfrak g)$. Viewed as a function of $g\in G$, the {\small RHS} $\Ad_g\circ\;\zd\circ\zw^L_g$ of the differential equation (\ref{CP}) is, just as the {\small LHS} $d_g\xi$, a 1-form in $\zW^1(G,\mathfrak g)$. We will show that the differential $d(\Ad_\bullet\circ\;\zd\circ\zw^L)$ of this 1-form vanishes. As any closed 1-form on a simply connected manifold is exact, it follows that there exists a function $\xi\in\smoothF(G,\mathfrak g)$ such that $d\xi=\Ad_\bullet\circ\;\zd\circ\zw^L$, i.e. that the Cauchy problem (\ref{CP}) admits a solution -- which is obviously unique.

It remains to prove that $d(\Ad_\bullet\circ\;\zd\circ\zw^L)=0$. Let $g\in G$ and $x,y\in T_gG$, denote by $x_e,y_e$ the corresponding vectors $\zw^L_gx,\zw^L_gy$ in $\mathfrak g$, and let $X=(\zw^L)^{-1}x_e, Y=(\zw^L)^{-1}y_e$ be the induced left invariant vector fields of $G$. It is enough to show that \be\label{Diff}d(\Ad_\bullet\circ\;\zd\circ\zw^L)(X,Y)(g)=0\;.\ee Indeed, this means that the value at $g$ of $d(\Ad_\bullet\circ\;\zd\circ\zw^L)$ vanishes on arbitrary vectors $x,y$.

The {\small LHS} of (\ref{Diff}) is the value at $g$ of $$X\cdot \Ad_\bullet(\zd y_e)-Y\cdot \Ad_\bullet(\zd x_e)-\Ad_\bullet(\zd[x_e,y_e])\;,$$ that is \be\label{Value}T_g(\Ad_\bullet(\zd y_e))(T_eL_g(x_e))-T_g(\Ad_\bullet(\zd x_e))(T_eL_g(y_e))-\Ad_g(\zd[x_e,y_e])\;.\ee The first term of (\ref{Value}) is the value at $x_e$ of the derivative at $e$ of $f=\Ad_\bullet(\zd y_e)\circ L_g$, i.e. of the function given by $$f(h)=\Ad_{gh}(\zd y_e)=(\Ad_g\circ \Ad_h)(\zd y_e)\;.$$ We get $$T_ef(x_e)=(\Ad_g\circ \ad_\bullet)(\zd y_e)(x_e)=\Ad_g[x_e,\zd y_e]\;.$$ Hence, we finally obtain $$\Ad_g[x_e,\zd y_e]-\Ad_g[y_e,\zd x_e]-\Ad_g(\zd[x_e,y_e])=\Ad_g([\zd x_e,y_e]+[x_e,\zd y_e]-\zd[x_e,y_e])=0\;.$$ This completes the proof. \end{proof}

\subsection{From {\small DLG}s to {\small DLA}s and vice versa}

If $(G,X)$ is a {\small DLG}, i.e. {\small LG} $G$ endowed with a multiplicative vector field $X$, then $(\mathfrak{g},\zd)$, with $$\mathfrak{g}=T_eG\;\;\text{and}\;\;\zd=T_e(\zw^R X)\;,$$ is a {\small DLA} -- the {\small DLA} of the {\small DLG} $(G,X)$. Conversely, if $(\mathfrak{g},\zd)$ is a {\small DLA}, then $(G,X)$, where $$G=\int \mathfrak g$$ is the unique simply connected {\small LG} integrating $\mathfrak g$ and where $$X=(T_e\circ\zw^R)^{-1}\zd\;,$$ is a simply connected {\small DLG} -- the unique simply connected {\small DLG} integrating $(\mathfrak{g},\zd)$.

We have thus proven the result announced in the beginning of this section:
\begin{thm} Any {\small DLG} differentiates to a {\small DLA}, and any {\small DLA} integrates to a unique simply connected {\small DLG}.\end{thm}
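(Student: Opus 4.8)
The plan is to observe that both the differentiation and the integration procedures are nothing but the composition of the two isomorphisms already established, namely the Maurer--Cartan isomorphism of Proposition~\ref{prop:MCiso} and the van Est isomorphism. Written out on objects, these composites reproduce exactly the two constructions announced just before the statement, so the bulk of the work is already done; what remains is to assemble the pieces and to track carefully where simple-connectedness is, and is not, needed.

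First I would treat differentiation. Given a {\small DLG} $(G,X)$, Proposition~\ref{prop:MCiso} guarantees that $\xi := \zw^R(X)$ is a smooth group $1$-cocycle in $Z^1_{\op{sm}}(G,\Ad)$, and applying the tangent map $T_e$ then yields $\zd := T_e\xi \in \Der(\mathfrakg)$. The point worth stressing is that the first part of the van Est proof --- the assertion that $T_e$ sends a group $1$-cocycle to a derivation --- uses only the cocycle identity and its derivatives at $e$, and in particular does \emph{not} require $G$ to be simply connected. Hence $(\mathfrakg,\zd)$ is a genuine {\small DLA} for an arbitrary Lie group $G$, so differentiation is unconditional.

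Next I would treat integration, where simple-connectedness becomes essential. Starting from a {\small DLA} $(\mathfrakg,\zd)$, Lie's third theorem supplies the unique simply connected Lie group $G=\int\mathfrakg$ with $T_eG=\mathfrakg$; because this $G$ is simply connected, the van Est isomorphism applies in full and produces a \emph{unique} $\xi\in Z^1_{\op{sm}}(G,\Ad)$ with $T_e\xi=\zd$. Inverting the Maurer--Cartan isomorphism then gives $X:=(\zw^R)^{-1}(\xi)\in\multvf(G)$, which is well defined and multiplicative again by Proposition~\ref{prop:MCiso}, so $(G,X)$ is a simply connected {\small DLG} differentiating to $(\mathfrakg,\zd)$. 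Uniqueness follows by unwinding the definitions: the integrating group is unique by Lie's theorem, and once $G$ is fixed the vector field $X$ is forced, being the image of $\zd$ under the inverse of the composite bijection $T_e\circ\zw^R$. I expect no real obstacle in this step; the entire difficulty has been front-loaded into the van Est isomorphism, and the only subtlety worth emphasizing is the asymmetry just noted --- differentiation is available for every Lie group, whereas integration to a \emph{unique} object forces the simply connected setting, exactly as in the classical Lie correspondence.
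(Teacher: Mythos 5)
Your proposal is correct and follows essentially the same route as the paper: the paper also defines differentiation by $\mathfrak{g}=T_eG$, $\zd=T_e(\zw^R X)$ and integration by $G=\int\mathfrak g$, $X=(T_e\circ\zw^R)^{-1}\zd$, i.e.\ as the composite of the Maurer--Cartan isomorphism of Proposition~\ref{prop:MCiso} with the van Est isomorphism. Your added remark that differentiation needs no simple-connectedness while integration to a unique object does is a accurate and slightly more explicit reading of the same argument.
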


%%%%%%%%%%%%%%%%%%%%%%%%%%%%%%%%%%%%%%%%%%%%%%%%%%%%%%%%%%%%%
%%%%%%%%%%%%%%%%%%%%%%%%%%%%%%%%%%%%%%%%%%%%%%%%%%%%%%%%%%%%%
%%%%%%%%%%%%%%%%%%%%%%%%%%%%%%%%%%%%%%%%%%%%%%%%%%%%%%%%%%%%%

\bigskip

\newpage

\section{Graded Hopf algebras} \label{GLG}
In this section we present the construction of graded Hopf algebras -- the main
``tool'' for studying the {\small GLA} $\to$ {\small GLG} integration procedure.
 Then we discuss  multiplicative vector fields and the Maurer--Cartan formalism
in the context.

\subsection{Preliminaries}

\begin{defi}
A \define{graded manifold} is a paracompact Hausdorff unital-graded-algebra-ed space, locally modelled as
$\gradedF(U|V) \equiv \smoothF(U) \otimes SV$, where $U$ is an open subset of an $\RR^n$ and $V$ is a graded vector space with $V_0 = \{0\}$, and $SV$ is the graded
symmetric algebra on it.
\end{defi}

In the appendix \ref{sec:gr-man-app} we give  details related to this definition as well as describe the categorical properties of graded manifolds.
The appendix \ref{sec:funcan} is devoted to the properties of the functional space
$\gradedF({\cal M})$ of functions on a graded manifold ${\cal M}$.

Since the category of graded manifolds is cartesian monoidal, the following definition
is natural.

\begin{defi}[Graded Lie group]
The \define{category of graded Lie semigroups} (resp. \define{mo\-noids}) is the category of semigroup (resp. monoid) objects in the category of graded manifolds.
The \define{category of graded Lie groups} is the category of monoid objects in the category of graded manifolds which are groups.
\end{defi}

The following two results are straightforward as well and only quoted here for later reference.

\begin{lem}
Given linear maps between (unital) $R$-modules, where $R$ is a unital commutative ring\footnote{
From now on $R$ will be a field $k$ of characteristic $0$.
In this paper $k=\R$  unless the contrary is explicitly assumed, although for $k=\C$ apparently there is no conceptual issue either.},
$a \colon A \to B$,  $b \colon A' \to R$,  $c \colon B \to C$,
one has
$c \circ (a \otimes b) = (c \circ a) \otimes b \colon A \otimes A' \to C$.
Given
$a \colon A \to R$,  $b \colon A' \to B$,  $c \colon B \to C$,
one has
$c \circ (a \otimes b) = a \otimes (c \circ b) \colon A \otimes A' \to C$.
\begin{equation*}
\xymatrix{
A \ar[r]^a & B \ar@{}[d]_{\otimes} \ar@{.>}[r] & B \ar[r]^c &C\\
A' \ar[r]^b & R \ar@{.>}[ur]
}
\qquad,\qquad
\xymatrix{
A \ar[r]^a & B \ar@{}[d]_{\otimes} \ar@{.>}[dr]\\
A' \ar[r]^b & R \ar@{.>}[r] & B \ar[r]^c &C
}
\end{equation*}
\end{lem}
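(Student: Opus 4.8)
The plan is to reduce both identities to the elementary interchange (functoriality) law for tensor products of linear maps, together with the canonical unit isomorphisms $B \otimes R \isomto B$ and $R \otimes B \isomto B$. The notation $a \otimes b$ in the statement implicitly uses these identifications: in the first diagram $a \otimes b \colon A \otimes A' \to B \otimes R$ is composed with $B \otimes R \cong B$ before applying $c$. The key observation is that, under the right unit isomorphism $B \otimes R \isomto B$, $v \otimes r \mapsto rv$, the map $c \colon B \to C$ corresponds to $c \otimes \id_R \colon B \otimes R \to C \otimes R$ (this is just naturality of the unit constraint); symmetrically, under $R \otimes B \isomto B$ it corresponds to $\id_R \otimes c$.

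First I would treat the left-hand diagram. Writing $c$ as $c \otimes \id_R$ after the identification $B \otimes R \cong B$, the interchange law $(f_1 \otimes g_1) \circ (f_2 \otimes g_2) = (f_1 \circ f_2) \otimes (g_1 \circ g_2)$ yields
\[
c \circ (a \otimes b) = (c \otimes \id_R) \circ (a \otimes b) = (c \circ a) \otimes (\id_R \circ b) = (c \circ a) \otimes b,
\]
which is the asserted equality of maps $A \otimes A' \to C$. The right-hand diagram is handled identically, now writing $c$ as $\id_R \otimes c$ after $R \otimes B \cong B$ and applying the interchange law in the second slot, giving $c \circ (a \otimes b) = a \otimes (c \circ b)$.

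Alternatively, and perhaps more transparently, I would verify each identity on decomposable tensors $x \otimes x'$ with $x \in A$, $x' \in A'$, which suffices since such tensors generate $A \otimes A'$ and every map in sight is linear. For the first identity, $(a \otimes b)(x \otimes x') = a(x) \otimes b(x')$ is identified with $b(x')\,a(x) \in B$, so applying $c$ and using its $R$-linearity gives $b(x')\,c(a(x))$; on the other side $\bigl((c \circ a) \otimes b\bigr)(x \otimes x')$ is identified with the same element. The second identity follows by the symmetric computation.

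The only point requiring genuine care is the (non-)appearance of Koszul signs in the graded setting: in the interchange law a sign $(-1)^{|g_1||f_2|}$ would in general intervene, but here one of the two factors is always $\id_R$, and since $R = k$ is concentrated in degree $0$ this sign is trivial. Thus the graded case reduces verbatim to the ungraded computation, and there is no real obstacle — the lemma is essentially a bookkeeping statement about the unit constraints of the tensor product, recorded here only for later reference.
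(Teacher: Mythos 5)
Your proposal is correct and matches the paper's own (one-line) proof, which likewise reduces the lemma to the $R$-linearity of $c$ together with the identification $B \otimes R \simeq B$; your verification on decomposable tensors and the naturality/interchange phrasing are just more detailed renderings of the same observation. The added remark that no Koszul sign intervenes because $R=k$ sits in degree $0$ is a sensible extra check, consistent with the paper's setting.
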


\begin{proof}
This is just a reformulation of the $R$-linearity of $c$ and the identification $B \otimes R \simeq B$.
\end{proof}

\begin{lem}
If $(A, \mu)$ is a unital graded commutative algebra, where $\mu$ is the multiplication, and $B$ is an $A$-bimodule and $X \in \calL(A,B)$ is a graded vector space morphism of degree $|X|$, then $X \in \Der(A,B)$ if and only if
\begin{equation}
X \circ \mu = \mu \circ (X \otimes \id +\id \otimes X)
\end{equation}
with implicit Koszul sign.
A derivation on a unital algebra vanishes on scalars.
\end{lem}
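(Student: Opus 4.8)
The plan is to evaluate both sides of the displayed identity on an arbitrary elementary tensor $a \otimes b \in A \otimes A$ and to match the outcome against the graded Leibniz rule defining $\Der(A,B)$. Recall that $X \in \Der(A,B)$ means precisely
\[
X(ab) = X(a)\cdot b + (-1)^{|X||a|}\, a\cdot X(b),
\]
where $\cdot$ denotes the right, resp.\ the left, action of $A$ on the bimodule $B$. The only point requiring care is that the symbol $\mu$ on the right-hand side of the statement no longer denotes the algebra multiplication $A \otimes A \to A$, but rather the two bimodule actions $B \otimes A \to B$ and $A \otimes B \to B$; this is the standard abuse by which a derivation valued in a bimodule is written with a single $\mu$. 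Spelling this out is what makes the equivalence transparent.

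First I would unwind the Koszul signs. With the convention $(f \otimes g)(a \otimes b) = (-1)^{|g||a|}\, f(a) \otimes g(b)$ and using $|\id| = 0$, one gets $(X \otimes \id)(a \otimes b) = X(a) \otimes b$ and $(\id \otimes X)(a \otimes b) = (-1)^{|X||a|}\, a \otimes X(b)$. Applying the appropriate bimodule action then yields
\[
\mu \circ (X \otimes \id)(a \otimes b) = X(a)\cdot b, \qquad \mu \circ (\id \otimes X)(a \otimes b) = (-1)^{|X||a|}\, a\cdot X(b),
\]
whereas the left-hand side gives $X \circ \mu\,(a \otimes b) = X(ab)$. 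Comparing, the identity $X \circ \mu = \mu \circ (X \otimes \id + \id \otimes X)$ holds on $a \otimes b$ exactly when the Leibniz rule holds for the pair $(a,b)$. Since elementary tensors span $A \otimes A$ and all maps in sight are $\KK$-linear, the two conditions are equivalent on all of $A \otimes A$, which establishes $X \in \Der(A,B) \Leftrightarrow X \circ \mu = \mu \circ (X \otimes \id + \id \otimes X)$.

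For the final assertion, I would simply feed $a = b = 1_A$ into the Leibniz rule. As $|1_A| = 0$ the sign is trivial, and since $1_A$ acts as the identity on the unital bimodule $B$, one obtains $X(1_A) = X(1_A \cdot 1_A) = X(1_A) + X(1_A) = 2\,X(1_A)$, hence $X(1_A) = 0$ in $B$; no hypothesis on the characteristic is needed for this cancellation. By $\KK$-linearity $X$ then vanishes on every scalar $\zl\, 1_A$, as claimed.

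I expect no genuine obstacle here: the whole content is the bookkeeping of Koszul signs together with the recognition that the single symbol $\mu$ stands for two distinct bimodule actions on the right-hand side. The one spot where a sign could be mishandled is the factor $(-1)^{|X||a|}$ produced by passing $X$ across $a$ in $(\id \otimes X)$, so that is the step I would verify most carefully against the fixed sign convention.
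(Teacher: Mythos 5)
Your proof is correct and follows essentially the same route as the paper's: both unwind the displayed identity on elementary tensors $f \otimes g$, identify it with the graded Leibniz rule (with the Koszul sign $(-1)^{|X||f|}$ arising from $\id \otimes X$), and obtain $X(1_A)=0$ by evaluating at $1_A \otimes 1_A$ and cancelling, extending to all scalars by linearity. Your extra care in noting that $\mu$ on the right denotes the bimodule actions and that no characteristic assumption is needed for the cancellation $X(1_A)=X(1_A)+X(1_A)$ is a harmless refinement of the same argument.
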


\begin{proof}
This is a reformulation of the Leibniz rule.
Let $f, g \in A$ with $f$ homogeneous.
Then $X \in \Der(A,B)$ means
$X(fg) = (Xf) g + (-1)^{|X||f|} f (Xg) = (X \otimes \id + \id \otimes X) (f \otimes g)$,
which means
$X \circ \mu (f \otimes g) = \mu \circ (X \otimes \id + \id \otimes X) (f \otimes g)$.
In particular, $1_A$ being of degree 0, one has $X(1_A^2) = (X1_A)1_A + (-1)^0 1_A(X1_A)$ so $X1_A=0$ and by linearity, $X$ vanishes on scalars.
\end{proof}

\subsection{Formulaire for graded Hopf algebras}

The monoidal product of the category of graded manifolds and the fact that their structure sheaves are Fr\'echet (see Appendix \ref{sec:funcan}) imply  that the structure sheaf of a graded Lie group is a sheaf of topological graded Hopf algebras.
In this subsection, we recall a few facts about these.
A good reference for the non-graded case is~\cite[Chapter III.1--III.3, 39--56]{kass}.

\begin{defi}

A topological \define{graded Hopf algebra} is a Fr\'echet graded vector space $H$ with structure maps:
\begin{enumerate}
\itemsep0em
\item
a multiplication $\mu \colon H \compTens H \to H$,
\item
a unit $\eta \colon \RR \to H$,
\item
a comultiplication $\Delta \colon H \to H \compTens H$,
\item
a counit $\epsilon \colon H \to \RR$,
\item
an antipode $S \colon H \to H$,
\end{enumerate}
satisfying the following axioms:
\begin{enumerate}
\itemsep0em
\item
unit laws
$\mu \circ (\id_H \otimes \eta) = \mu \circ (\eta \otimes \id_H) = \id_H$;
\item
associativity
$\mu \circ (\id_H \otimes \mu) = \mu \circ (\mu \otimes \id_H)$;
\item
counit laws
$(\id_H \otimes \epsilon) \circ \Delta = (\epsilon \otimes \id_H) \circ \Delta = \id_H$;
\item
coassociativity
$(\id_H \otimes \Delta) \circ \Delta = (\Delta \otimes \id_H) \circ \Delta$;
\item
the multiplication is a coalgebra morphism
$(\mu \otimes \mu) \circ (\id_H \otimes \tau \otimes \id_H) \circ (\Delta \otimes \Delta) = \Delta \circ \mu$, $\tau$ being the (involutive) flip operator, and
$\epsilon \otimes \epsilon = \epsilon \circ \mu$;
\item
the unit is a coalgebra morphism
$\Delta \circ \eta = \eta \otimes \eta$
and
$\epsilon \circ \eta = \id_{\RR}$;
\item
the antipode identity
$\mu \circ (\id_H \otimes S) \circ \Delta = \mu \circ (S \otimes\id_H) \circ \Delta = \eta \circ \epsilon$.
\end{enumerate}

\noindent By omitting the antipode structure together with the antipode identity we obtain the notion of a topological \define{unital and counital graded bialgebra}.
\noindent By dropping off the unit and the counit structures and the related identities, we obtain the general notion of a topological \define{graded bialgebra}.

\end{defi}
Note that the four morphic conditions are equivalent to saying that the comultiplication and the counit are algebra morphisms.
These maps should be continuous graded linear maps (of degree 0).
The completed tensor product is the projective one (to have a good representation of its elements as the absolutely convergent sums of decomposable tensors).
Generally, the Hopf algebra will be nuclear, so that the completed tensor product is well defined.

The antipode is an antimorphism.
In the commutative or cocommutative cases, it is bijective, so is an antiautomorphism; still under these conditions, it is an involution.
A morphism of Hopf algebras automatically intertwines the antipodes.

The example to keep in mind is  $H = \calC(G)$, with $\calC(G \times G) \simeq \calC(G) \compTens \calC(G)$.
In the non-graded case, if $G$ is a Lie semigroup (resp. monoid, group), then $\smoothF(G)$ is a topological unital  bialgebra (resp. unital and counital bialgebra,   Hopf algebra).

A note about notation: to keep in mind that a commutative Hopf algebra (unital and counital bialgebra, unital bialgebra) is thought of as a space of functions on a group (monoid, semigroup), respectively,
we use stars for the coalgebra maps, as if they were pullbacks, hence a counit $\epsilon = e^*$, comultiplication $\Delta = m^*$, and antipode (coinverse) $S = i^*$. In order to make computations more intuitive, later on we extend these notations to all Hopf algebras and bialgebras, even when they are not necessarily commutative.
When they are the unit, multiplication and inverse of a graded Lie group, we will drop the prefix ``co''.

We define the \define{constant map}
$\hat{e} = \eta \circ \epsilon = \eta \circ e^* \colon H \to H$.

In any Hopf algebra, $e^* \circ i^* = e^*$  and $i^* \circ \eta = \eta$ (\cite[Theorem~III.3.4.a~p.52]{kass}) --- this is part of the antipode being an antimorphism.
The other part reads: $i^*(fg) = i^*g i^*f$ and $m^* \circ i^* = (i^* \otimes i^*) \circ \tau \circ m^*$.

\begin{defi}
If $A$ is an algebra with multiplication $\mu$ and $C$ is a coalgebra with comultiplication $m^*$, and if $a,b \colon C \to A$
are linear maps, then we define their \define{convolution product} \\
$a \conv b = \mu \circ (a \otimes b) \circ m^* \colon C \to A$.
\end{defi}
One can show that $(\Hom(C,A), \conv, \hat{e})$ is an associative unital algebra (see~\cite[Proposition~III.3.1.a~p.50]{kass}).
%One has $(f \conv g)(x) = \sum f(x_{(1)}) g(x_{(2)})$.
The antipode identity then reads
\begin{eqnarray}
\label{antipode_convolution}\id \conv i^* = i^* \conv \id = \hat{e}
\end{eqnarray}
The identity (\ref{antipode_convolution}) implies that if an antipode exists (in a bialgebra), then it is unique.

\begin{coro}\label{cor:antipode_consequence}
If $a \in \Hom(H)$ such that $a \conv \id = 0$, then $a = 0$.
\end{coro}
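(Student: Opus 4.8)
The plan is to recognize that the statement is a one-line cancellation in the associative unital algebra $(\Hom(H),\conv,\hat{e})$, using the fact — already recorded just above — that this is indeed an associative algebra with unit $\hat{e}$, and that the antipode identity \eqref{antipode_convolution} exhibits $i^*$ as a two-sided convolution inverse of $\id$. In other words, $\id$ is a convolution-invertible element, so it may be cancelled. Since we are given $a \conv \id = 0$, I would cancel $\id$ on the right by convolving with $i^*$.

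Concretely, I would write out the chain
\[
a = a \conv \hat{e} = a \conv (\id \conv i^*) = (a \conv \id) \conv i^* = 0 \conv i^* = 0,
\]
where the first equality uses that $\hat{e}$ is the unit of $(\Hom(H),\conv)$, the second uses the antipode identity \eqref{antipode_convolution} in the form $\id \conv i^* = \hat{e}$, the third is associativity of $\conv$, the fourth is the hypothesis $a \conv \id = 0$, and the last uses that $\conv$ is bilinear (being $\mu \circ (a \otimes b) \circ m^*$ in its two arguments), so that convolving $0$ with anything yields $0$. This completes the argument.

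Honestly, there is no genuine obstacle here: every ingredient — associativity of convolution, the role of $\hat{e}$ as unit, and the antipode identity — has already been established or quoted in the excerpt, so the corollary reduces to pure formal bookkeeping. The only point requiring a moment's care is choosing the correct side on which to cancel: because the hypothesis is $a \conv \id = 0$, one must use the \emph{right} inverse relation $\id \conv i^* = \hat{e}$ so that associativity regroups the product as $(a \conv \id) \conv i^*$ rather than $a \conv (\id \conv i^*)$ in an unhelpful way; both groupings coincide by associativity, but it is the former that makes the hypothesis applicable. No computation beyond this formal manipulation is needed.
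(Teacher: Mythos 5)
Your proof is correct and is precisely the argument the paper intends: the corollary is stated without proof immediately after the antipode identity $\id \conv i^* = \hat{e}$ and the observation that $(\Hom(H),\conv,\hat{e})$ is an associative unital algebra, so the intended justification is exactly your one-line cancellation $a = a \conv \hat{e} = (a \conv \id) \conv i^* = 0$. Your remark about choosing the right inverse relation $\id \conv i^*=\hat{e}$ (rather than $i^*\conv\id=\hat{e}$) so that associativity regroups usefully is the only subtlety, and you handled it correctly.
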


\subsection{Left-invariant derivations of graded bialgebras}

\begin{defi}
\define{Vector fields} on graded manifolds are  derivations of the structure sheaf,
\begin{equation}
\vf(\calM) = \Der \left( \calO_\calM \right).
\end{equation}
\end{defi}

As in the non-graded case, if $f \colon \calM \to \calN$ is a smooth\footnote{Talking about smoothness in the graded setting is obviously a language abuse, what is meant is the class of functions $f$ with a smooth body part $\tilde f$ and the appropriate graded part. To keep the intuition from the non-graded case we will however write
$\smoothF(\cdot)$ meaning $\gradedF(\cdot)$} map, and $X \in \vf(\calM)$ and $Y \in \vf(\calN)$ are vector fields, then $X$ and $Y$ are \define{$f$-related}, which we denote by $X \sim_f Y$, if $Tf \circ X = Y \circ f$.
If $X \sim_f Y$ and $Y \sim_g Z$, then $X \sim_{g \circ f} Z$, and $X \sim_{\id_M} X$, so graded manifolds with a vector field and smooth maps relating them form a category.
This category is cartesian with product $(\calM,X) \times (\calN,Y) = (\calM \times \calN, X \otimes \id_{\calO_\calN} + \id_{\calO_\calM} \otimes Y)$ and obvious projections, and terminal object $((\{\ast\},0),0)$.

In the non-graded case, $X \in \linvvf(G)$ means that for all $g \in G$, $X$ is $L_g$-related to itself, that is, $X \circ L_g = TL_g \circ X$.
Seeing $X$ as a derivation of $\smoothF(G)$, this means that for all $f \in \smoothF(G)$ and $g \in G$, one has
$(Xf) \circ L_g = Tf \circ X \circ L_g = Tf \circ TL_g \circ X = T(f \circ L_g) \circ X = X(f \circ L_g)$.

We want to express this in terms of the Hopf algebra $\smoothF(G)$:
\begin{equation}
(\eval_g \otimes \id) \circ m^* f = f \circ L_g
\qquad\text{and}\qquad
(\id \otimes \eval_g) \circ m^* f = f \circ R_g.
\end{equation}
This can also be obtained by noting that $L_g = m \circ (\dot{g} \otimes \id)$, where $\dot{g}$ is the constant $g$, so by dualizing, $L_g^* = (\eval_g \otimes \id) \circ m^*$.

Using this to translate the condition of left-invariance $(Xf) \circ L_g = X(f \circ L_g)$ gives
$(\eval_g \otimes \id) \circ m^*Xf = X (\eval_g \otimes \id) \circ m^*f$ for all $f$ and $g$, so
$(\eval_g \otimes \id) \circ m^*X = X (\eval_g \otimes \id) \circ m^*= (\eval_g \otimes \id) \circ (\id \otimes X) \circ m^*$ for all $g$. Therefore $X$ is left-invariant if and only if
$(\id \otimes X) \circ m^* = m^* \circ X$,
and this is taken as the definition in the case of a graded Lie semigroup (and, furthermore, of a graded bialgebra as soon as we replace $m^*$ with a general comultiplication):

\begin{defi}%[Left-invariant vector field]
\label{defi:linder} Let $(B,\mu, m^*)$ be a graded bialgebra. A \define{left-invariant derivation} of $B$
is  an element of
\begin{equation}
\label{eq:left-derivation}
 \Der^{\mathrm{left}}(B) = \left\{
X \in \Der (B) \mid
(\id \otimes X) \circ m^* =m^* \circ X
\right\}
\end{equation}
Here $\Der (B)$ is the graded Lie algebra of graded derivations of $B$ regarded as an algebra.
Similarly, $X$ is \define{right-invariant derivation} if and only if $(X \otimes \id) \circ m^* = m^* \circ X$.\\
\end{defi}

\begin{prop}\label{prop:linvder_is_algebra} The space of left- (right-)invariant derivations of a graded bialgebra is closed under the graded Lie bracket.
\end{prop}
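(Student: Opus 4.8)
The plan is to reduce everything to a single sign-bookkeeping computation. Since $\Der(B)$ is already known to be a graded Lie algebra under the graded commutator $[X,Y] = X\circ Y - (-1)^{|X||Y|}\,Y\circ X$, I do not need to re-prove that $[X,Y]$ is a derivation; the only thing left to check is that the defining left-invariance relation $(\id\otimes X)\circ m^* = m^*\circ X$ from Definition~\ref{defi:linder} is preserved by the bracket. Because the bracket is bilinear and the left-invariance condition is linear in $X$, it suffices to treat homogeneous $X,Y\in\Der^{\mathrm{left}}(B)$.

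The key observation I would isolate first is that composing two maps of the shape $\id\otimes(-)$ introduces no Koszul sign. In the general composition rule for graded tensor maps, $(f\otimes g)\circ(h\otimes k) = (-1)^{|g||h|}(f\circ h)\otimes(g\circ k)$, taking $f=h=\id$ (which has degree $0$) gives $(\id\otimes X)\circ(\id\otimes Y) = \id\otimes(X\circ Y)$. Using this identity together with the left-invariance of $X$ and $Y$ and the associativity of composition, I would compute
\begin{equation*}
m^*\circ(X\circ Y) = (\id\otimes X)\circ m^*\circ Y = (\id\otimes X)\circ(\id\otimes Y)\circ m^* = \bigl(\id\otimes(X\circ Y)\bigr)\circ m^*,
\end{equation*}
and in exactly the same way $m^*\circ(Y\circ X) = \bigl(\id\otimes(Y\circ X)\bigr)\circ m^*$.

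Subtracting the second identity from the first weighted by the Koszul coefficient $(-1)^{|X||Y|}$, and noting that $m^*$ itself is a degree-$0$ map so contributes no extra sign, then yields
\begin{equation*}
m^*\circ[X,Y] = \bigl(\id\otimes[X,Y]\bigr)\circ m^*,
\end{equation*}
which is precisely the statement that $[X,Y]\in\Der^{\mathrm{left}}(B)$. The right-invariant case is verbatim the same, run with $X\otimes\id$ in place of $\id\otimes X$, and relying on $(X\otimes\id)\circ(Y\otimes\id) = (X\circ Y)\otimes\id$, which is again sign-free precisely because the identity tensor factor has degree $0$. The one place where care is genuinely required — and the only spot where a sign slip could occur — is this composition rule for graded tensor maps; I would therefore state it explicitly, after which the verification is purely formal.
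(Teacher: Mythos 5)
Your proof is correct and is exactly the computation the paper leaves implicit (its proof reads only ``follows immediately from the definition''): the single genuine subtlety is the sign-free composition $(\id\otimes X)\circ(\id\otimes Y)=\id\otimes(X\circ Y)$, which you rightly isolate and justify via the degree-$0$ identity factor. Nothing is missing; you may stop there.
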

\begin{proof} Follows immediately from the definition of left- (right-)invariant derivations.
\end{proof}

\begin{defi}%[Left-invariant vector field]
\label{defi:linvvf} A \define{left-invariant vector field} on a graded Lie semigroup $G$ is a left-invariant derivation of the corresponding graded unital bialgebra of functions, i.e.
an element of
\begin{equation}
\label{eq:linvvf}
\linvvf(G) = \left\{
X \in \vf(G) \mid
(\id \otimes X) \circ m^* = m^* \circ X
\right\}
\end{equation}
where $m$ denotes the multiplication of $G$.
Similarly, $X$ is \define{right-invariant} if and only if $(X \otimes \id) \circ m^* = m^* \circ X$.\\
\end{defi}
\begin{rmk}
Although the multiplication is not written in the above definition, the bialgebra structure is needed: it is ``hidden'' in $X \in \vf(G)$.
\end{rmk}

\subsection{The graded Lie algebra of a graded Lie group}

To obtain the notion of tangent vector, we introduce the following:

\begin{defi}
If $(A,\mu)$ is a graded commutative $R$-algebra, $a \in A'$ is a linear form, and $B$ is an $A$-bimodule, an \define{$a$-derivation} from $A$ to $B$ is an element of
\begin{equation}
\Der_a(A,B) = \left\{
\xi \in \calL(A,B) \mid
\xi \circ \mu = \xi \otimes a + a \otimes \xi
\right\}.
\end{equation}
%If $a$ is the evaluation at $x$, we write $\Der_x$ for $\Der_a$.
\end{defi}

In a graded Lie monoid with unit $e^*$, we write $\Der_e$ for $\Der_{e^*}$.

If $X \in \vf(G)$, we write, by abuse of notation and analogy with evaluations ``$e^* = \eval_e$'', $X_e = e^* \circ X \in \Der_e(\gradedF(G),\RR) = \Der(\gradedF(G)_e,\RR)$. %(``germ-determination'' of derivations).

The following proposition is the exact analogue of~\cite[Proposition~7.2.3~p.115]{ccf}.

\begin{prop}\label{prop:left_translation}
If $G$ is a graded Lie monoid, there exists a graded linear isomorphism (of degree 0)
\begin{align*}
\linvvf(G) & \longisomto T_e G = \Der(\calC(G)_e,\RR)\\
X & \longmapsto X_e\\
\id\conv v  & \longmapsfrom v.
\end{align*}
In particular, a left-invariant vector field is determined by its ``value at the unit''.
\end{prop}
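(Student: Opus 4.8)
The plan is to verify that the two stated assignments are well defined and mutually inverse; all computations become transparent in Sweedler notation, where I write $m^* f = \sum f_{(1)} \otimes f_{(2)}$, so that $(\id \conv v)(f) = \mu \circ (\id \otimes v) \circ m^*(f) = \sum f_{(1)}\, v(f_{(2)})$ (with $v(f_{(2)}) \in \RR$), and I use the iterated coproduct $\sum f_{(1)} \otimes f_{(2)} \otimes f_{(3)}$ as needed.

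First I would check that $X \mapsto X_e = e^* \circ X$ really lands in $T_e G = \Der_e(\calC(G),\RR)$. Since $X$ is a derivation, $X \circ \mu = \mu \circ (X \otimes \id + \id \otimes X)$; composing on the left with the counit $e^*$ and using that $e^*$ is an algebra morphism, $e^* \circ \mu = e^* \otimes e^*$, gives at once $X_e \circ \mu = X_e \otimes e^* + e^* \otimes X_e$, i.e. $X_e \in \Der_e$. This step uses only multiplicativity of $e^*$ and is sign-free. For the reverse assignment I must check that $\id \conv v$ lies in $\linvvf(G)$. Left-invariance follows from coassociativity alone: one computes $(\id \otimes (\id \conv v)) \circ m^*(f) = \sum f_{(1)} \otimes f_{(2)}\, v(f_{(3)}) = m^*\!\big(\sum f_{(1)} v(f_{(2)})\big) = m^* \circ (\id \conv v)(f)$.

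The substantive point is that $\id \conv v$ is a derivation. Using that $m^*$ is an algebra morphism, $(\id \conv v)(fg) = \sum \pm\, f_{(1)} g_{(1)}\, v(f_{(2)} g_{(2)})$ with implicit Koszul signs; inserting the graded $e$-derivation identity $v(f_{(2)} g_{(2)}) = v(f_{(2)})\, e^*(g_{(2)}) + (-1)^{|v||f_{(2)}|} e^*(f_{(2)})\, v(g_{(2)})$ and then collapsing with the counit laws $\sum f_{(1)} e^*(f_{(2)}) = f$ and $\sum g_{(1)} e^*(g_{(2)}) = g$ reduces the expression to $(\id \conv v)(f)\, g + (-1)^{|v||f|} f\, (\id \conv v)(g)$, the graded Leibniz rule, and Proposition~\ref{prop:linvder_is_algebra} is not even needed here.

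It then remains to see the two maps are inverse. For $v \mapsto \id \conv v \mapsto e^* \circ (\id \conv v)$, the counit law $(e^* \otimes \id) \circ m^* = \id$ yields $e^*\!\big(\sum f_{(1)} v(f_{(2)})\big) = \sum e^*(f_{(1)})\, v(f_{(2)}) = v\!\big(\sum e^*(f_{(1)}) f_{(2)}\big) = v(f)$. For $X \mapsto X_e \mapsto \id \conv X_e$ with $X$ left-invariant, I would apply $(\id \otimes e^*)$ to the left-invariance identity $m^* \circ X = (\id \otimes X) \circ m^*$ and use the other counit law $(\id \otimes e^*) \circ m^* = \id$, obtaining $X(f) = \sum f_{(1)}\, e^*(X(f_{(2)})) = (\id \conv X_e)(f)$. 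Since $\mu$, $\eta$, $m^*$, $e^*$ all have degree $0$, both assignments preserve degree, so the bijection is a graded isomorphism of degree $0$. I expect the only real obstacle to be the bookkeeping of Koszul signs in the derivation property: the bialgebra axiom that $m^*$ is an algebra morphism carries the flip $\tau$, hence a sign $(-1)^{|f_{(2)}||g_{(1)}|}$ on the middle factors, and this must be reconciled with the sign in the graded $e$-derivation identity for $v$ so that together they produce exactly the single Koszul sign $(-1)^{|v||f|}$ of the Leibniz rule; left-invariance and the two inverse identities, relying only on (co)associativity and the counit laws, are sign-free.
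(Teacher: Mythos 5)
Your proof is correct and takes essentially the same route as the paper's: the same four verifications (the derivation property of $\id\conv v$ via the bialgebra compatibility of $\mu$ and $m^*$ together with $v\in\Der_e$, left-invariance via coassociativity, and the two counit-law computations showing the assignments are mutually inverse), merely transcribed into Sweedler notation instead of the paper's point-free compositions, with the extra---harmless and in the paper implicit---check that $X_e\in\Der_e$. One small nuance: Koszul signs do also arise in the left-invariance and inverse computations (from applying $\id\otimes v$ past $f_{(1)}$), but they occur identically on both sides, so your ``sign-free'' remark, while slightly imprecise, causes no gap.
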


\begin{proof}
First, we verify that $X=\id\conv v$ is a derivation. Indeed, by the compatibility of $\mu$ and $m^*$
$$
X\circ \mu = \mu\circ (\id\otimes v)\circ m^*\circ \mu =
\mu\circ (\id\otimes v)\circ (\mu\otimes \mu)\circ (\id\otimes \tau \otimes \id)\circ (m^*\otimes m^*)
$$
Using that $v$ is a derivation at $e^*$, we obtain
$$
 (\id\otimes v)\circ (\mu\otimes \mu) =(\mu\otimes \mu)\circ
(\id\otimes\id\otimes e^*\otimes v +\id\otimes\id\otimes v\otimes e^*)
$$
Therefore
$$
(\id\otimes v)\circ (\mu\otimes \mu)\circ (\id\otimes \tau \otimes \id)= (\mu\otimes \mu)\circ (\id\otimes \tau \otimes \id)
\circ (\id\otimes e^*\otimes \id \otimes v +\id\otimes v\otimes\id\otimes e^*)
$$
Taking into account that $(\id\otimes v)\circ m^* = X\otimes 1$ and $(\id\otimes e^*)\circ m^*= \id\otimes 1$, we get
\begin{eqnarray}\nonumber
X\circ \mu = \mu\circ (\mu\otimes \mu)\circ (\id\otimes \tau \otimes \id)\circ (\id\otimes 1\otimes X\otimes 1 + X\otimes 1\otimes \id\otimes 1)
=\\ \nonumber
\mu\circ (\mu\otimes \mu)\circ (\id\otimes X\otimes 1\otimes 1 + X\otimes \id\otimes 1\otimes 1)=
\mu\circ (\id\otimes X + X\otimes \id)
\end{eqnarray}

\noindent Second, we check that if $v \in T_e G$, then $\id\conv v=\mu\circ (\id \otimes v) \circ m^*$ is indeed left-invariant. One has
$$
(\id\otimes X)\circ m^* = (\id\otimes \mu)\circ (\id\otimes\id\otimes v)\circ (\id\otimes m^*)\circ m^*
$$ Using the coassoativity condition, we obtain
$$
(\id\otimes X)\circ m^* = (\id\otimes \mu)\circ (\id\otimes\id\otimes v)\circ (m^*\otimes \id)\circ m^*=
 (\id\otimes \mu)\circ  (m^*\otimes \id)\circ (\id\otimes v)\circ m^*
$$
Taking into account that $v$ takes values in constants, one can derive the following two identities (at the moment we need the first one):
\begin{eqnarray}\label{first_mu-m-v}  (\id\otimes \mu)\circ  (m^*\otimes \id)\circ (\id\otimes v) &=&
m^*\circ \mu\circ (\id\otimes v) \\ \label{second_mu-m-v}
(\mu\otimes \id)\circ  (\id\otimes m^*)\circ (v\otimes \id) &=&
m^*\circ \mu\circ (v\otimes\id)
\end{eqnarray}
Therefore by (\ref{first_mu-m-v}) we get the desired left-invariance of $X$:
$$
(\id\otimes X)\circ m^* = m^*\circ \mu\circ (\id\otimes v)\circ m^* =m^*\circ X
$$

\noindent Third, postcomposing the left-invariance relation with $\mu\circ(\id \otimes e^*)$ gives
$$
\id\conv X_e=\mu\circ (\id \otimes X_e) \circ m^* = \mu\circ (\id \otimes e^*) \circ m^* \circ X = X
$$
by the counit law.
%Postcomposing the left-invariance relation with $e^* \otimes \id$ gives $(e^* \otimes X) \circ m^* = X$, that is, $f_{(1)}^i(e) Xf_{(2)}^i(x) = Xf(x)$ which is the case $g=e$ of the above relation $X(f \circ L_g) = f_{(1)}^i(g) Xf_{(2)}^i$.
Lastly, we check, although this is not necessary in finite dimension, that $v =  e^*\circ\mu \circ (\id \otimes v) \circ m^*$.
We have $e^* \circ\mu\circ (\id \otimes v ) \circ m^* =\mu\circ (e^*\otimes e^*)\circ (\id \otimes v ) \circ m^*$  from the unit law.
From $e^*\circ v=v$ we immediately get
$e^*\circ (\id\conv v)=\mu\circ (e^*\otimes e^*\circ v)\circ m^* =\mu\circ (\id\otimes v)\circ (e^*\otimes \id)\circ m^*$. One the other hand
the counit property implies $ (e^*\otimes \id)\circ m^* =1\otimes \id \simeq \id$, therefore
$e^*\circ (\id\conv v)=\mu\circ (\id\otimes v)\circ (1\otimes \id) =v$.
\end{proof}

\begin{coro} Defined via the previous proposition, $T_e G$ is a graded Lie algebra. This fact is verified using the Proposition \ref{prop:linvder_is_algebra}
and Proposition \ref{prop:left_translation}: the first one implies that the space $\linvvf(G)$ is a graded Lie subalgebra of $\vf(G)$,
 while the second one gives us an explicit isomorphism between $\linvvf(G)$ and $T_e G$.
\end{coro}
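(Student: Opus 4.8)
The plan is to obtain the graded Lie algebra structure on $T_e G$ entirely by transport of structure, leaning on the fact that derivations of a graded commutative algebra already form a graded Lie algebra. First I would recall that $\vf(G) = \Der(\calC(G))$ is a graded Lie algebra under the graded commutator
\[
[X,Y] = X \circ Y - (-1)^{|X|\,|Y|} Y \circ X ,
\]
the point being that the graded commutator of two graded derivations is again a graded derivation, and that graded antisymmetry and the graded Jacobi identity follow formally from associativity of composition. This is the ambient structure into which everything will fit.

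Next I would invoke Proposition~\ref{prop:linvder_is_algebra} to conclude that $\linvvf(G) \subseteq \vf(G)$ is closed under this bracket, hence is a graded Lie subalgebra. This is the only step where left-invariance is used in an essential way: the condition $(\id \otimes X)\circ m^* = m^* \circ X$ is exactly what guarantees that $[X,Y]$ is left-invariant whenever $X$ and $Y$ are. Then, using the degree-$0$ graded linear isomorphism $\linvvf(G) \longisomto T_e G$, $X \mapsto X_e$, with inverse $v \mapsto \id \conv v$, supplied by Proposition~\ref{prop:left_translation}, I would transport the bracket to $T_e G$ by defining, for $v,w \in T_e G$,
\[
[v,w] := e^* \circ \big[\, \id \conv v,\ \id \conv w \,\big].
\]
This is well-defined precisely because $[\id \conv v, \id \conv w]$ lands in $\linvvf(G)$ by the previous step, so applying the isomorphism $X \mapsto X_e$ produces a genuine element of $T_e G$.

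By construction the isomorphism $X \mapsto X_e$ then intertwines the commutator on $\linvvf(G)$ with this bracket on $T_e G$, and since a degree-$0$ graded linear isomorphism carries graded antisymmetry and the graded Jacobi identity across verbatim, $(T_e G, [\,\cdot,\cdot\,])$ is a graded Lie algebra. I do not expect a genuine obstacle here, as both Propositions are already established and the remainder is a formal transport of structure; the only point deserving care is the well-definedness of the bracket, namely that the graded commutator of left-invariant derivations is again left-invariant, which is exactly the content of Proposition~\ref{prop:linvder_is_algebra}.
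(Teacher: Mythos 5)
Your proposal is correct and follows exactly the paper's own argument: Proposition~\ref{prop:linvder_is_algebra} makes $\linvvf(G)$ a graded Lie subalgebra of $\vf(G)$, and the degree-$0$ isomorphism of Proposition~\ref{prop:left_translation} transports the graded commutator to $T_e G$, with your explicit formula $[v,w] = e^* \circ [\id \conv v, \id \conv w]$ merely spelling out the transported bracket. The only addition beyond the paper's sketch is your (welcome) emphasis on well-definedness, i.e.\ that the bracket of left-invariant derivations is again left-invariant, which is indeed the content of Proposition~\ref{prop:linvder_is_algebra}.
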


Along with left translations $v^L\colon =\id\conv v$, we define right translations $v^R\colon = v\conv \id$, which are also derivations of the multiplication
$\mu$ (the proof is similar to the proof of  Proposition \ref{prop:left_translation}).

\begin{prop}\label{prop:left-right_commute} For any $v_1$ and $v_2$ the corresponding left and right translations super commute, i.e.
$v_1^L\circ v_2^R=(-1)^{k_1 k_2}v_2^R\circ v_1^L$,
where $k_1$ and $k_2$ are the degrees of $v_1$ and $v_2$, respectively.
\end{prop}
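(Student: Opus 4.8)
The plan is to compute the two composites directly from the convolution formulas and to see that they reduce to one and the same expression, built from the iterated coproduct, up to a single Koszul sign. Recall that $v^L=\id\conv v=\mu\circ(\id\otimes v)\circ m^*$ and $v^R=v\conv\id=\mu\circ(v\otimes\id)\circ m^*$. Fixing the sign convention $(\phi\otimes\psi)(x\otimes y)=(-1)^{|\psi|\,|x|}\,\phi(x)\otimes\psi(y)$ and writing $m^*f=\sum f'\otimes f''$ in Sweedler notation, I would first record
$$v_1^L(f)=\sum(-1)^{k_1|f'|}\,v_1(f'')\,f',\qquad v_2^R(f)=\sum v_2(f')\,f'',$$
using that the values $v_i(\cdot)\in\RR$ are degree-$0$ scalars and may be pulled through freely.

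Next I would compose. Since both $v^L$ and $v^R$ are built from a single application of $m^*$, forming $v_1^L\circ v_2^R$ and $v_2^R\circ v_1^L$ requires splitting the surviving factor once more; by coassociativity $(m^*\otimes\id)\circ m^*=(\id\otimes m^*)\circ m^*$, so I may use the unambiguous iterated coproduct $\sum f^{(1)}\otimes f^{(2)}\otimes f^{(3)}$. Substituting the formulas above and tracking signs yields
$$v_1^L\circ v_2^R(f)=\sum(-1)^{k_1|f^{(2)}|}\,v_2(f^{(1)})\,v_1(f^{(3)})\,f^{(2)},$$
$$v_2^R\circ v_1^L(f)=\sum(-1)^{k_1(|f^{(1)}|+|f^{(2)}|)}\,v_2(f^{(1)})\,v_1(f^{(3)})\,f^{(2)}.$$
Both are the triple product applied to $(v_2\otimes\id\otimes v_1)\circ\big((m^*\otimes\id)\circ m^*\big)$ up to a sign; the extra factor $(-1)^{k_1|f^{(1)}|}$ in the second line is exactly the Koszul sign picked up when the degree-$k_1$ map $v_1$ is commuted past the tensor slot $f^{(1)}$ on which $v_2$ acts.

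Finally I would pin down that sign. Because $v_2$ is homogeneous of degree $k_2$ and lands in $\RR$, the scalar $v_2(f^{(1)})$ vanishes unless $|f^{(1)}|=k_2$; hence on every nonzero term $(-1)^{k_1|f^{(1)}|}=(-1)^{k_1k_2}$, and therefore $v_2^R\circ v_1^L=(-1)^{k_1k_2}\,v_1^L\circ v_2^R$, which is the asserted supercommutation since $(-1)^{k_1k_2}$ is its own inverse. I expect the only genuine difficulty to be the Koszul bookkeeping, and specifically the justification that the single residual discrepancy is the constant $(-1)^{k_1k_2}$ rather than a term-dependent sign — this is precisely the point at which the homogeneity and scalar-valuedness of $v_2$ enter.
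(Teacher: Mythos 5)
Your proof is correct: the Sweedler formulas for $v_1^L$ and $v_2^R$, the reduction of both composites to the iterated coproduct, and the final sign bookkeeping all check out, and the key observation --- that $v_2(f^{(1)})$ is a scalar vanishing off a single homogeneous component, so the a priori term-dependent sign $(-1)^{k_1|f^{(1)}|}$ collapses to the constant $(-1)^{k_1k_2}$ --- is exactly what makes the argument close. But your route differs genuinely from the paper's. The paper computes $v_1^L\circ v_2^R = v_1^L\circ\mu\circ(v_2\otimes\id)\circ m^*$ coordinate-free, using three facts established in Proposition \ref{prop:left_translation}: that $v_1^L$ is a derivation of $\mu$ annihilating constants (so one Leibniz term, the one hitting the scalar $v_2(\cdot)$, drops), the single tensor-leg interchange $(\id\otimes v_1^L)\circ(v_2\otimes\id)=(-1)^{k_1k_2}(v_2\otimes\id)\circ(\id\otimes v_1^L)$, which is the sole source of the sign, and finally the left-invariance $(\id\otimes v_1^L)\circ m^* = m^*\circ v_1^L$ to reassemble $v_2^R\circ v_1^L$. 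You invoke none of these: your computation uses only the definitions, coassociativity, and the homogeneity of $v_2$, and in effect identifies both composites with the three-fold convolution $v_2\conv\id\conv v_1$ (as you half-note), so the proposition becomes an instance of associativity of the convolution product. The paper's proof buys conceptual transparency --- left-invariant operators commute with right translations essentially by definition, with the Koszul sign isolated in one interchange --- and is shorter given the earlier proposition; yours buys self-containedness and makes the underlying convolution-associativity mechanism explicit, and, like the paper's, it never uses commutativity of $\mu$, so it is equally valid for general graded unital counital bialgebras. Two minor caveats on your write-up: in this topological setting the Sweedler sums are absolutely convergent series in the completed projective tensor product rather than finite sums (the paper sets this up precisely so such manipulations are legitimate, with termwise homogeneous projection justified by continuity), and whether the nonvanishing component has $|f^{(1)}|=k_2$ or $-k_2$ depends on the degree convention for maps into $\RR$; the parity, and hence the sign $(-1)^{k_1k_2}$, is unaffected either way.
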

\begin{proof}
Taking into the account that $v_1^L$ is a derivation and that, in particular, $v_1^L$ annihilates constants, one has
$$
v_1^L\circ v_2^R =v_1^L\circ \mu\circ ( v_2\otimes \id)\circ m^*
=\mu\circ (v_1^L\otimes \id +\id\otimes v_1^L)\circ ( v_2\otimes \id)\circ m^*=
\mu\circ (\id\otimes v_1^L)\circ ( v_2\otimes \id)\circ m^*
$$
From the commutation relation $ (\id\otimes v_1^L)\circ ( v_2\otimes \id)=(-1)^{k_1k_2}( v_2\otimes \id)\circ (\id\otimes v_1^L)$
we immediately obtain
$v_1^L\circ v_2^R =(-1)^{k_1k_2} \mu\circ ( v_2\otimes \id)\circ (\id\otimes v_1^L) \circ m^*$. On the other hand, by definition
$v_1^L$ is left-invariant, therefore
$$
(\id\otimes v_1^L) \circ m^* = m^*\circ v_1^L\,.
$$
and finally,
$$
v_1^L\circ v_2^R =(-1)^{k_1k_2} \mu\circ ( v_2\otimes \id)\circ m^*\circ v_1^L  =(-1)^{k_1k_2} v_2^R \circ v_1^L
$$

\end{proof}

\begin{rmk} As we never used the commutativity assumption in this section, the statements of Proposition \ref{prop:left_translation}
and Proposition \ref{prop:left-right_commute} remain true for arbitrary graded unital counital bialgebras.
\end{rmk}

\subsection{Multiplicative vector fields on graded Lie groups}

In the non-graded case, $X \in \multvf(G)$ means that $(X,X)$ and $X$ are $m$-related, meaning $X \circ m = Tm \circ (X,X)
 = Tm \circ (X \otimes \id + \id \otimes X)$ by the Leibniz rule.
$X_{gh} = T_hL_g (X_h) + T_gR_h (X_g)$ for all $g,h \in G$.
In terms of derivations, this means
$(Xf) \circ m = Tf \circ X \circ m = Tf \circ Tm \circ (X \otimes \id + \id \otimes X) = T(f \circ m) \circ (X \otimes \id + \id \otimes X) = (X \otimes \id + \id \otimes X) (f \circ m)$.
Therefore, $X$ is multiplicative if and only if
$(X \otimes \id + \id \otimes X) \circ m^* = m^* \circ X$, and this is taken as the definition in the graded case:

\begin{defi}%[Multiplicative vector field]
\label{defi:multvf1}
A \define{multiplicative vector field} on a graded Lie semigroup $G$ is an element of
\begin{equation}
\label{eq:multvf2}
\multvf(G) = \left\{
X \in \vf(G) \mid
(X \otimes \id + \id \otimes X) \circ m^* = m^* \circ X
\right\}
\end{equation}
where $m$ denotes the multiplication of $G$.
This means exactly that $(X,X) \sim_{m^*} X$, that is, $X$ is compatible with the multiplication. \\
Similarly, we say that for a graded Lie monoid, $X$ is compatible with the unit if $X \sim_{e^*} 0$, \\ that is, $e^* \circ X = X_e = 0$; \\
and for a graded Lie group  $X$ is compatible with the inverse if $X \sim_{i^*} X$.
\end{defi}

The following  is the graded analogue of Proposition~\ref{prop:multComp}.
\begin{prop}
For a graded Lie monoid, a multiplicative vector field $X$ is compatible with the unit.
\begin{equation}
X_e := e^* \circ X = 0
\end{equation}
For a graded Lie group, a multiplicative vector field is compatible with the inverse.
\begin{equation}
i^* \circ X = X \circ i^*.
\end{equation}
\end{prop}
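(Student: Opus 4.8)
The plan is to treat the two assertions separately, each mirroring the corresponding step of the non-graded Proposition~\ref{prop:multComp}, but now phrased entirely through the Hopf-algebra structure maps $\mu,\eta,m^*,e^*,i^*$ of $\gradedF(G)$. Throughout, the auxiliary maps $e^*$ and $i^*$ have degree $0$, so all the Koszul signs that appear when composing tensor products are trivial; this is precisely what lets the computation run as in the classical case.

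For the unit-compatibility I would exploit the counit laws. Postcompose the multiplicativity identity $(X \otimes \id + \id \otimes X) \circ m^* = m^* \circ X$ with $e^* \otimes e^*$. On the right-hand side, $(e^* \otimes e^*) \circ m^* = e^*$ (apply $e^*$ to the counit law $(\id \otimes e^*)\circ m^* = \id$), so it becomes $e^* \circ X = X_e$. On the left, using $(e^* \otimes e^*)\circ(X \otimes \id) = X_e \otimes e^*$ and $(e^* \otimes e^*)\circ(\id \otimes X) = e^* \otimes X_e$ together with the two counit laws $(\phi \otimes e^*)\circ m^* = \phi = (e^* \otimes \phi)\circ m^*$ (valid for any functional $\phi$), each of the two summands collapses to $X_e$. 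Hence the identity reduces to $2\,X_e = X_e$, forcing $X_e = e^* \circ X = 0$. This is the bialgebra incarnation of ``set $h=e$ in \eqref{eq:multvf}'', and it uses only the counit structure, as is appropriate for a monoid.

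For the inverse-compatibility I would compute the composite $\mu \circ (i^* \otimes \id) \circ m^*$, which by the antipode identity equals $\hat e = \eta \circ e^*$, interposed with $X$ on either side. Precomposing with $X$ and using the just-established $e^* \circ X = X_e = 0$ gives $\mu \circ (i^* \otimes \id)\circ m^* \circ X = \eta \circ X_e = 0$; expanding the same expression via multiplicativity $m^* \circ X = (X \otimes \id + \id \otimes X)\circ m^*$ yields $(i^* \circ X)\conv \id + i^* \conv X = 0$. Symmetrically, postcomposing with $X$ and using that the derivation $X$ annihilates constants gives $X \circ \mu \circ (i^* \otimes \id)\circ m^* = X \circ \eta \circ e^* = 0$; expanding $X \circ \mu = \mu \circ (X \otimes \id + \id \otimes X)$ yields $(X \circ i^*)\conv \id + i^* \conv X = 0$. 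Subtracting the two relations cancels the common term $i^* \conv X$ and leaves $\big((i^* \circ X) - (X \circ i^*)\big)\conv \id = 0$, whence $i^* \circ X = X \circ i^*$ by Corollary~\ref{cor:antipode_consequence}.

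I expect the inverse part to be the main obstacle: one must invoke the antipode identity on both sides and carefully expand each composite through the derivation/multiplicativity rules so that the two resulting convolution identities share the term $i^* \conv X$ and differ only in $(i^* \circ X)\conv \id$ versus $(X \circ i^*)\conv \id$. Once this bookkeeping is done, the cancellation together with Corollary~\ref{cor:antipode_consequence} (which encodes that $i^*$ is the convolution inverse of $\id$) closes the argument at once. Note that this part genuinely uses the antipode, consistent with it being the group — not merely the monoid — statement, and that the unit-compatibility proved first is what makes the right-hand side $\eta \circ X_e$ vanish.
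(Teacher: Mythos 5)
Your proposal is correct and follows essentially the same route as the paper: for the unit, your postcomposition with $e^* \otimes e^*$ is exactly the shortcut the paper itself mentions parenthetically, and for the inverse you use the same ingredients (antipode identity, the Leibniz rule for $X$, multiplicativity, the already-established $X_e = 0$, and Corollary~\ref{cor:antipode_consequence}). The only cosmetic difference is that you work with the left antipode identity $i^* \conv \id = \hat e$ and subtract two explicitly derived convolution relations, whereas the paper starts from $\id \conv i^* = \hat e$ and folds the same cancellation into a single chain by substituting $(X \otimes \id)\circ m^* = m^* \circ X - (\id \otimes X)\circ m^*$.
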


\begin{proof}
Postcomposing the multiplicativity relation with $\id \otimes e^*$ gives $(X \otimes e^* + \id \otimes X_e ) \circ m^* = X$.
Since $X \otimes e^* = X \circ (\id \otimes e^*)$ and $(\id \otimes e^*) \circ m^* = \id$, the left-hand side above is equal to $X + (\id \otimes X_e ) \circ m^*$.
Therefore $ (\id \otimes X_e ) \circ m^* = 0$ so $\id \conv X_e = 0$ so $X_e = 0$ by  Corollary \ref{cor:antipode_consequence} (or directly postcomposing with $e^* \otimes e^*$).

For the inverse,
the (right) antipode identity reads $\id \conv i^* = \mu \circ (\id \otimes i^*) \circ m^* = \hat{e} = \eta \circ e^*$.
Postcomposing it with a vector field $X$ gives us
$$
X \circ \eta \circ e^*=X \circ \mu \circ (\id \otimes i^*) \circ m^*
= \mu \circ (X \otimes \id + \id \otimes X) \circ (\id \otimes i^*) \circ m^*
$$
The left-hand side of the above formula equals to $0$ since a derivation vanishes on scalars. Therefore
$$
0= \mu \circ (X \otimes i^* + \id \otimes (X \circ i^*)) \circ m^*=\mu \circ
(\id\otimes i^*)\circ (X\otimes\id)\circ m^* + \id\conv (X\circ i^*)
%= X \conv i^* + (-1)^{...} \id \conv (X \circ i^*)
$$
Taking into account that $X$ is multiplicative we have $(X\otimes\id)\circ m^* =m^*\circ X-(\id\otimes X)\circ m^*$. This immediately implies
$$
\mu \circ (\id\otimes i^*)\circ (X\otimes\id)\circ m^*=(\id\conv i^*) \circ X- \id\conv (i^*\circ X)
$$
But $\id\conv i^*=\hat e=\eta\circ e^*$. Given that $X_e=e^*\circ X=0$ we obtain $(\id\conv i^*) \circ X$, thus $\mu \circ (\id\otimes i^*)\circ (X\otimes\id)\circ m^*=- \id\conv (i^*\circ X)$ and $0=\id\conv \left(X\circ i^*-i^*\circ X\right)$
By the consequence of the antipode identity (Corollary \ref{cor:antipode_consequence}), this implies $X \circ i^* = i^* \circ X$ as wanted.
\end{proof}

\begin{coro}
The category of graded Lie semigroups (resp. monoids, groups) with a multiplicative vector field is isomorphic to the category of semigroup (resp. monoid, group) objects in the category of graded manifolds with a vector field, and maps preserving them.
\end{coro}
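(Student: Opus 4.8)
The proof will parallel the non-graded Corollary that follows Proposition~\ref{prop:multComp}: I would produce a functor between the two categories that is the identity on all underlying data, and verify it is bijective on objects and on morphisms. Throughout, the decisive dictionary entry is the dualization of $f$-relatedness: for a smooth map $f$, the condition $X \sim_f Y$ (with $X$ on the source and $Y$ on the target) translates into the function algebras as $X \circ f^* = f^* \circ Y$. I would first record that the category of graded manifolds equipped with a vector field is cartesian monoidal, with product $(\calM,X)\times(\calN,Y)=(\calM\times\calN, X\otimes\id+\id\otimes Y)$ and terminal object $((\{\ast\},0),0)$, so that semigroup, monoid, and group objects make sense in it; moreover the functor forgetting the vector field preserves these products and the terminal object, hence sends any semigroup (monoid, group) object $(G,X)$ to a bona fide graded Lie semigroup (monoid, group) $G$.

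Next I would unpack a semigroup object. It consists of $(G,X)$ together with an associative morphism $m\colon (G,X)\times(G,X)\to(G,X)$. On underlying graded manifolds $m$ is exactly an associative multiplication, i.e.\ a graded Lie semigroup structure on $G$; and saying that $m$ is a morphism in the vector-field category means precisely that the vector field $X\otimes\id+\id\otimes X$ on $G\times G$ is $m$-related to $X$, that is $(X\otimes\id+\id\otimes X)\circ m^*=m^*\circ X$. By Definition~\ref{defi:multvf1} this is exactly the multiplicativity of $X$. Thus, at the level of objects, semigroup objects correspond bijectively to graded Lie semigroups carrying a multiplicative vector field.

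For monoids and groups the extra structure maps are where the preceding Proposition earns its keep. A monoid object must carry a unit morphism $e\colon(\{\ast\},0)\to(G,X)$; the relatedness condition $0\sim_e X$ reads $e^*\circ X = X_e = 0$, i.e.\ compatibility with the unit. A group object must carry an inversion morphism $i\colon(G,X)\to(G,X)$, which is a morphism iff $X\sim_i X$, i.e.\ $i^*\circ X = X\circ i^*$, compatibility with the inverse. The point that makes the statement nontrivial is that, by the preceding Proposition, a multiplicative vector field on a graded Lie monoid (group) automatically satisfies $X_e=0$ (resp.\ $i^*\circ X=X\circ i^*$). Hence the unit and inversion of $G$ are automatically morphisms in the vector-field category, so every graded Lie monoid (group) with a multiplicative vector field really is a monoid (group) object, and conversely no condition beyond multiplicativity is needed.

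Finally, I would treat morphisms. A morphism of semigroup (monoid, group) objects is a morphism of the underlying objects of the vector-field category commuting with the structure maps; concretely this is a smooth semigroup (monoid, group) homomorphism $f$ that additionally satisfies $X\sim_f Y$ --- which is exactly a morphism in the category of graded Lie semigroups (monoids, groups) with a multiplicative vector field. The assignment that regards a multiplicative $m$ as a morphism of the vector-field category is therefore bijective on objects and on morphisms, with an evident inverse, and so is an isomorphism of categories. I expect the only real friction to be bookkeeping in the dualization --- confirming that the categorical product genuinely carries $X\otimes\id+\id\otimes X$ and translating $\sim_m,\sim_e,\sim_i$ into the comultiplication, counit, and antipode identities; once this dictionary is fixed, the preceding Proposition supplies the essential content and the remainder is formal.
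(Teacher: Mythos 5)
Your proof is correct and is essentially the paper's own argument: the paper states this corollary without a separate proof, relying on exactly the unwinding you carry out --- the cartesian monoidal structure of graded manifolds with a distinguished vector field (with product vector field $X \otimes \id + \id \otimes X$ and terminal object $((\{\ast\},0),0)$, both recorded earlier in the paper), the dual formulation of $f$-relatedness, and the preceding proposition making compatibility with the unit and the inversion automatic consequences of multiplicativity, just as in the non-graded corollary after Proposition~\ref{prop:multComp}. Your bookkeeping of the dualized conditions ($(X\otimes\id+\id\otimes X)\circ m^* = m^*\circ X$, $e^*\circ X=0$, $i^*\circ X = X\circ i^*$) matches the paper's conventions, so there is nothing to repair.
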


\begin{prop}\label{prop:exact} Let $G$ be a graded monoid, $H$ be its bialgebra of functions, and $v$ be a derivation at $e$.
Then $X=\id\conv v-v\conv\id$ is a multiplicative vector field.
\end{prop}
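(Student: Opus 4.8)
The plan is to write $X$ as the difference of the left and right translations attached to $v$, namely $X = v^L - v^R$ with $v^L := \id\conv v$ and $v^R := v\conv\id$, and to exploit the invariance of these two pieces separately. First I would record that both $v^L$ and $v^R$ are derivations of the multiplication $\mu$: for $v^L$ this is the first part of Proposition~\ref{prop:left_translation}, and for $v^R$ it is the analogous statement quoted immediately afterwards. Since $\Der(H)$ is a vector space, the difference $X = v^L - v^R$ is again a derivation, hence a genuine vector field $X \in \vf(G)$. The only remaining point is therefore the multiplicativity identity $(X \otimes \id + \id \otimes X)\circ m^* = m^* \circ X$ of Definition~\ref{defi:multvf1}.

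Next I would feed in the invariance properties. By Proposition~\ref{prop:left_translation} the left translation is left-invariant, $m^* \circ v^L = (\id \otimes v^L)\circ m^*$, and by the symmetric argument $v^R$ is right-invariant, $m^* \circ v^R = (v^R \otimes \id)\circ m^*$. Subtracting gives
\[
m^* \circ X = (\id \otimes v^L)\circ m^* - (v^R \otimes \id)\circ m^* .
\]
Expanding the left-hand side of the multiplicativity condition by bilinearity,
\[
(X \otimes \id + \id \otimes X)\circ m^* = (v^L \otimes \id)\circ m^* - (v^R \otimes \id)\circ m^* + (\id \otimes v^L)\circ m^* - (\id \otimes v^R)\circ m^* ,
\]
so that, after cancelling the two terms common to both expressions, the whole statement reduces to the single identity
\[
(v^L \otimes \id)\circ m^* = (\id \otimes v^R)\circ m^* .
\]

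To establish this key identity I would unfold $v^L = \mu \circ (\id \otimes v)\circ m^*$ and $v^R = \mu \circ (v \otimes \id)\circ m^*$ and push the outer $m^*$ through using coassociativity $(m^* \otimes \id)\circ m^* = (\id \otimes m^*)\circ m^*$. Both sides then become the common ``insert $v$ in the middle tensor slot'' operator $(\id \otimes v \otimes \id)\circ(\id \otimes m^*)\circ m^*$, composed on the left with $\mu \otimes \id$ and on the right with $\id \otimes \mu$. Because $v$ is valued in the scalars $\RR$, which sit in degree $0$, the scalar produced by $v$ can be moved freely across the remaining tensor factor, so the two ways of reabsorbing it coincide; this is precisely the content of the auxiliary relations~\eqref{first_mu-m-v}--\eqref{second_mu-m-v} already used for Proposition~\ref{prop:left_translation}.

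The main obstacle is bookkeeping rather than conceptual: one must track the Koszul signs when commuting the degree-$|v|$ map $v$ past a tensor factor in both $(v^L \otimes \id)\circ m^*$ and $(\id \otimes v^R)\circ m^*$. A short Sweedler-notation computation confirms that both sides equal $\sum (-1)^{|v|\,|x_{(1)}|}\, v(x_{(2)})\, x_{(1)} \otimes x_{(3)}$, the sign being identical on each side precisely because the scalar $v(x_{(2)})$ has degree $0$ and commutes past $x_{(3)}$ without cost; this is the one place where a spurious sign could slip in. Everything else---linearity, the derivation property, and the two one-sided invariances---is quoted directly from the preceding results.
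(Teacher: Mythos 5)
Your proof is correct and follows essentially the same route as the paper's: the paper likewise expands $(X\otimes\id+\id\otimes X)\circ m^*$ into the cross terms $(v^L\otimes\id)\circ m^*-(\id\otimes v^R)\circ m^*$ (its term $(I)$), shown to vanish via coassociativity and the scalar-valuedness of $v$ through identities \eqref{first_mu-m-v}--\eqref{second_mu-m-v}, and the diagonal terms $(\id\otimes v^L)\circ m^*-(v^R\otimes\id)\circ m^*$ (its term $(II)$), which equal $m^*\circ X$ by the invariance of the translations. Your only deviation is organizational: you quote the one-sided invariances from Proposition~\ref{prop:left_translation} up front and thereby reduce the claim to the single identity $(v^L\otimes\id)\circ m^*=(\id\otimes v^R)\circ m^*$, which is exactly the paper's vanishing of $(I)$, established by the same argument (and your Koszul-sign check is correct).
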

\begin{proof} We have
\begin{eqnarray}\nonumber
(X\otimes \id)\circ m^* &=& (\mu\otimes\id)\circ \left( (\id\otimes v-v\otimes\id)\otimes\id \right)\circ (m^*\otimes \id)\circ m^*
\\ \nonumber
(\id\otimes X)\circ m^* &=& (\id\otimes\mu)\circ \left(\id\otimes (\id\otimes v-v\otimes\id) \right)\circ (\id\otimes m^*)\circ m^*
\end{eqnarray}
Thus $(X\otimes \id +\id\otimes X)\circ m^*= (I)+(II)$, where
\begin{eqnarray}\nonumber
(I) &=& (\mu\otimes\id)\circ (\id\otimes v\otimes \id)\circ  (m^*\otimes \id)\circ m^*-
 (\id\otimes\mu)\circ (\id\otimes v\otimes \id)\circ  (\id\otimes m^*)\circ m^*
\\ \nonumber
(II) &=&   (\id\otimes\mu)\circ (\id\otimes\id\otimes v)\circ (\id\otimes m^*)\circ m^*-
(\mu\otimes\id)\circ (v\otimes\id\otimes\id)\circ (\otimes m^*\otimes \id)\circ m^*
\end{eqnarray}
Thanks to the coassociativity law and the identities (\ref{first_mu-m-v}) and (\ref{second_mu-m-v}),
the first term $(I)$ vanishes, while the second term equals to
\begin{eqnarray}\nonumber
(II)=(\id\otimes \mu)\circ (m^*\otimes\id)\circ (\id\otimes v)\circ m^* -
(\mu\otimes \id)\circ (\id\otimes m^*)\circ (v\otimes \id)\circ m^*=\\ \nonumber =
m^*\circ \mu\circ (\id\otimes v-v\otimes \id)\circ m^* = m^*\circ X
\end{eqnarray}
Finally $(X\otimes \id +\id\otimes X)\circ m^*= m^*\circ X $ which proves that $X$ is multiplicative.
\end{proof}

%\label{first_mu-m-v}

\begin{rmk} Although in this section we deal with commutative bialgebras representing functions on graded monoids, we do not use commutativity in the proofs, therefore all statements, like in the previous subsection,  are also valid in the general (non-commutative) case. In the next subsection, however, the commutativity will be important.
\end{rmk}

\subsection{The Maurer--Cartan automorphism of a graded Lie group}
\label{sec:mc}

\begin{defi}
The right (resp. left) \define{Maurer--Cartan automorphism} of a graded Lie group $\calG$ is given by
\begin{equation}
\omega^R = \bullet \conv i^*
\end{equation}
(resp. $\omega^L = i^* \conv \bullet$).
\end{defi}

By the hexagon identity, $\left( \omega^R \right)^{-1} = \bullet \conv \id$ and $\left( \omega^L \right)^{-1} = \id \conv \, \bullet$ and $\omega^R$ and $\omega^L$ are linear automorphisms of $\calL(H)$.

In the non-graded case, this coincides with the usual definition of what we called $\omega^R_\bullet$.
It had values in $\smoothF(G,\mathfrakg)$, which should therefore be replaced by $\smoothF(G,\Der_e(\calC_G,\RR))$, morphism of graded rings.

Note also that there is no inclusion between $\Der_e(\calC_G,\calC_G)$ and $\Der(\calC_G,\calC_G)$.

To relate this to the classical case, we prove the following proposition.

\begin{prop}
If $(G,e)$ is a Lie monoid over $k$, then the map
\begin{align*}
\Psi \colon \smoothF(G,\mathfrakg) &\longisomto \Der_e(\smoothF(G), \smoothF(G))\\
\xi & \longmapsto \big( f \mapsto \xi(\bullet) \cdot f \big)
\end{align*}
is a linear isomorphism with inverse given by
$\Psi^{-1}(u) \colon x \mapsto \big( f \mapsto u(f)(x) \big)$
where we used the isomorphism between $\mathfrakg = T_eG$ and  $\Der_e(\smoothF(G),k)$.
\end{prop}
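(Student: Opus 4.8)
The plan is to check that $\Psi$ and the stated $\Psi^{-1}$ are well-defined, linear, and mutually inverse. Throughout I use the classical identification $\mathfrakg = T_eG \cong \Der_e(\smoothF(G),k)$, under which a tangent vector becomes the point-derivation at $e$ given by directional differentiation; in particular each value $\xi(x)$ of an element $\xi \in \smoothF(G,\mathfrakg)$ is a linear form on $\smoothF(G)$ obeying the Leibniz rule at $e$, namely $\xi(x)(fg) = \xi(x)(f)\,g(e) + f(e)\,\xi(x)(g)$. Both $\Psi$ and $\Psi^{-1}$ are built from the single pairing $(\xi,f,x) \mapsto \xi(x)(f)$, so the verification is essentially a matter of unwinding definitions; the only genuinely nontrivial point is a smoothness argument, which is where finite-dimensionality of $\mathfrakg$ enters. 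Note that the monoid structure plays no role here beyond singling out the basepoint $e$.

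First I would check that $\Psi$ lands in $\Der_e(\smoothF(G),\smoothF(G))$. Given $\xi \in \smoothF(G,\mathfrakg)$, put $u = \Psi(\xi)$, so that $u(f)(x) = \xi(x)(f)$. Smoothness of $\xi$ gives $u(f) \in \smoothF(G)$ for each $f$. Fixing $x$ and using the pointwise Leibniz rule for $\xi(x)$ recalled above, one reads off $u(fg)(x) = u(f)(x)\,g(e) + f(e)\,u(g)(x)$ for all $x$, which is exactly the defining identity $u \circ \mu = u \otimes e^* + e^* \otimes u$ of an $e$-derivation valued in the bimodule $\smoothF(G)$. Linearity of $\xi \mapsto \Psi(\xi)$ is immediate from the formula.

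Next I would treat $\Psi^{-1}$, whose well-definedness contains the only subtle step. For $u \in \Der_e(\smoothF(G),\smoothF(G))$ and $x \in G$, set $\eta_x(f) := u(f)(x)$. Evaluating the derivation identity $u(fg) = u(f)\,g(e) + f(e)\,u(g)$ at the point $x$ shows that $\eta_x$ obeys the Leibniz rule at $e$, hence $\eta_x \in \Der_e(\smoothF(G),k) \cong \mathfrakg$. It remains to show that $x \mapsto \eta_x$ is a smooth $\mathfrakg$-valued map. Here I would choose a chart at $e$ with coordinates $y^1,\dots,y^n$ and extend them to global functions $\tilde y^{\,i} \in \smoothF(G)$ by a bump function supported near $e$; since a point-derivation at $e$ depends only on germs at $e$, one has $\eta_x = \sum_i u(\tilde y^{\,i})(x)\,\partial_{y^i}|_e$, and each coefficient $u(\tilde y^{\,i})(x)$ is smooth in $x$. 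Thus $\Psi^{-1}(u) \in \smoothF(G,\mathfrakg)$, and linearity is again clear.

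Finally I would verify that the two maps are mutually inverse, which is immediate once both are expressed through the pairing: $\big(\Psi^{-1}(\Psi(\xi))(x)\big)(f) = \Psi(\xi)(f)(x) = \xi(x)(f)$ gives $\Psi^{-1} \circ \Psi = \id$, while $\big(\Psi(\Psi^{-1}(u))(f)\big)(x) = \Psi^{-1}(u)(x)(f) = u(f)(x)$ gives $\Psi \circ \Psi^{-1} = \id$. The main obstacle is the smoothness claim for $\Psi^{-1}(u)$ in the third step; every other assertion is a direct transcription of the defining Leibniz identities.
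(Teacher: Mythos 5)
Your proof is correct and follows essentially the same route as the paper: the same pointwise Leibniz computations show that $\Psi(\xi)$ is an $e$-derivation and that each $\Psi^{-1}(u)(x)$ lies in $\mathfrakg$, and the two maps are transparently mutually inverse via the pairing $(\xi,f,x)\mapsto \xi(x)(f)$. Your only addition is a welcome one: you actually carry out the smoothness verification for $\Psi^{-1}(u)$ (extending coordinates at $e$ by a bump function and writing $\eta_x = \sum_i u(\tilde y^{\,i})(x)\,\partial_{y^i}|_e$), which the paper merely dispatches with the remark that it follows ``by standard technique using a partition of unity.''
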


\begin{proof}
If $\xi \in \smoothF(G,\mathfrakg)$ and $f, g \in \smoothF(G)$, then $\Psi(\xi)(gh) \colon x \mapsto \xi(x) \cdot fg = (\xi(x) \cdot f)g(e) + f(e) (\xi(x) \cdot g)$, so $\Psi(\xi) \in \Der_e(\smoothF(G),\smoothF(G))$. \\
Conversely, if $u \in \Der_e(\smoothF(G),\smoothF(G))$ and $x \in G$ and $f, g \in \smoothF(G)$, then
$\Psi^{-1}(u)(x) (fg) = u(fg)(x) = u(f)(x) g(e) + f(e) u(g)(x)$ so $\Psi^{-1}(u)(x) \in \mathfrakg$, and $\Psi^{-1}(u)$ is smooth;
the latter can be verified by standard technique using a partition of unity.
\end{proof}

\begin{prop}\label{prop:MC}
The Maurer--Cartan automorphism restricts to the linear isomorphism
\begin{equation}
\omega^R \colon \vf(G) \longisomto \Der_e(\calC_G,\calC_G).
\end{equation}
\end{prop}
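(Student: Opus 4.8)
The plan is to leverage the fact, already recorded just before the statement, that $\omega^R = \bullet \conv i^*$ and $(\omega^R)^{-1} = \bullet \conv \id$ are mutually inverse linear automorphisms of $\calL(H)$, where I abbreviate $H = \calC_G$. It then suffices to establish two inclusions: (a) $X \in \Der(H) = \vf(G)$ implies $X \conv i^* \in \Der_e(H,H)$, and (b) $\xi \in \Der_e(H,H)$ implies $\xi \conv \id \in \Der(H)$. Indeed, (a) gives $\omega^R(\Der(H)) \subseteq \Der_e(H,H)$, while (b), after applying $\omega^R$ and using $\omega^R \circ (\omega^R)^{-1} = \id$, gives the reverse inclusion $\Der_e(H,H) \subseteq \omega^R(\Der(H))$; injectivity is inherited from the ambient automorphism, so $\omega^R$ restricts to a bijection, and since it has degree $0$ this is a graded linear isomorphism as claimed.

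For both inclusions the engine is the bialgebra compatibility $m^* \circ \mu = (\mu \otimes \mu) \circ (\id \otimes \tau \otimes \id) \circ (m^* \otimes m^*)$, inserted into $\bullet \conv \bullet = \mu \circ (\bullet \otimes \bullet) \circ m^*$ precomposed with $\mu$. For (b) I would compute $(\xi \conv \id) \circ \mu$, push $m^*$ through $\mu$, use the $e^*$-derivation identity $\xi \circ \mu = \xi \otimes e^* + e^* \otimes \xi$ to split the result into two terms, and then collapse each term by the counit laws $(e^* \otimes \id) \circ m^* = \id = (\id \otimes e^*) \circ m^*$ (this is the content of the identities \eqref{first_mu-m-v}--\eqref{second_mu-m-v}). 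In Sweedler shorthand the two surviving terms reduce to $X(a)\,b + a\,X(b)$ with $X = \xi \conv \id$, which is exactly the Leibniz rule; note this direction uses neither the antipode nor commutativity.

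For (a) I would expand $(X \conv i^*) \circ \mu$ the same way, now invoking that $X$ is a derivation and that the antipode is an antimorphism, $i^* \circ \mu = \mu \circ (i^* \otimes i^*) \circ \tau$. After expansion one obtains, in Sweedler form, $\xi(ab) = X(a_1)\,b_1 i^*(b_2)\,i^*(a_2) + a_1\,X(b_1) i^*(b_2)\,i^*(a_2)$, and the two cross-terms are resolved by the antipode identity $\mu \circ (\id \otimes i^*) \circ m^* = \hat{e} = \eta \circ e^*$ (and its mirror), turning $\sum b_1 i^*(b_2) = e^*(b)$ and $\sum a_1 i^*(a_2) = e^*(a)$; the result is $e^*(b)\,\xi(a) + e^*(a)\,\xi(b)$, precisely the $e^*$-derivation identity for $\xi = X \conv i^*$. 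I expect the hard part to be the second term: to recognize $\sum X(b_1) i^*(b_2) = \xi(b)$ and to peel off $\sum a_1 i^*(a_2) = e^*(a)$, one must reorder the four factors inside the product of $H$, so this step genuinely uses the graded commutativity of $\calC_G$ — consistent with the remark that commutativity becomes essential in this subsection — and demands careful bookkeeping of the Koszul signs produced by the flip $\tau$ and by commuting the degree-$|X|$ map past group-like factors.
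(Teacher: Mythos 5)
Your proposal is correct and follows essentially the same route as the paper's proof: both establish the two inclusions $X \in \Der(H) \Rightarrow X \conv i^* \in \Der_e(H,H)$ and $\xi \in \Der_e(H,H) \Rightarrow \xi \conv \id \in \Der(H)$ by pushing $m^* \circ \mu$ through the bialgebra compatibility, and conclude bijectivity from $\left( \omega^R \right)^{-1} = \bullet \conv \id$. You also correctly locate the only genuine uses of graded commutativity and of the antipode identity $\id \conv i^* = \hat{e}$ in the forward direction (where four $H$-valued factors must be reordered), exactly where the paper's footnote flags commutativity, while the converse direction needs neither, since there only the scalar-valued $e^*$ is moved around.
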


\begin{lem}
A special instance of the $(\otimes, \circ)$-interchange identity is:
if $a \in \calL (A,A')$ and $b \in \calL (B,B')$, then $a \otimes b = (\id_{A'} \otimes b) \circ (a \otimes \id_{B}) = (a \otimes \id_{B'}) \circ (\id_{A} \otimes b) \colon A \otimes B \to A' \otimes B'$.
Together with the unit law, if $a \colon A \to H$, this gives $\mu \circ (a \otimes \eta) = \mu \circ (\eta \otimes a) = a \colon A \to H$.
In particular (using again the interchange property), if $a \in \calL(A,H)$, then
$\mu \circ (a \otimes \hat{e}) = a \otimes e^* \colon A \otimes H \to H$.
Together with the counit law, if $a \colon H \to A$, then $(a \otimes \hat{e}) \circ \Delta = a \otimes \eta \colon H \to A \otimes H$.
\end{lem}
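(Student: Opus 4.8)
The plan is to derive all four assertions from one structural input, the bifunctoriality of the tensor product (the interchange law $(f\otimes g)\circ(f'\otimes g')=(f\circ f')\otimes(g\circ g')$), together with the unit and counit axioms of $H$. The first displayed identity is itself nothing but this interchange law with an identity inserted in each slot: taking $f=\id_{A'}$, $g=b$, $f'=a$, $g'=\id_B$ yields $(\id_{A'}\otimes b)\circ(a\otimes\id_B)=(\id_{A'}\circ a)\otimes(b\circ\id_B)=a\otimes b$, and the symmetric choice $f=a$, $g=\id_{B'}$, $f'=\id_A$, $g'=b$ gives the other factorization. I would record this as the base identity and reuse it below.

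For the second assertion, with $a\colon A\to H$, I would factor $a\otimes\eta=(\id_H\otimes\eta)\circ(a\otimes\id_{\RR})$ via the base identity, whence $\mu\circ(a\otimes\eta)=\bigl(\mu\circ(\id_H\otimes\eta)\bigr)\circ(a\otimes\id_{\RR})=\id_H\circ(a\otimes\id_{\RR})$, which is $a$ after the canonical identification $A\otimes\RR\cong A$; the sole extra ingredient is the unit law $\mu\circ(\id_H\otimes\eta)=\id_H$. The mirror computation with $\mu\circ(\eta\otimes\id_H)=\id_H$ delivers $\mu\circ(\eta\otimes a)=a$.

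The third and fourth assertions follow by the same recipe once $\hat{e}=\eta\circ\epsilon$ (with $\epsilon=e^*$) is unfolded. For the third, I would use the interchange law to write $a\otimes\hat{e}=(\id_H\otimes\eta)\circ(a\otimes\epsilon)$ and postcompose with $\mu$, so that the unit law collapses $\mu\circ(\id_H\otimes\eta)$ to $\id_H$ and leaves $\mu\circ(a\otimes\hat{e})=a\otimes\epsilon=a\otimes e^*$. For the fourth, I would instead write $a\otimes\hat{e}=(a\otimes\eta)\circ(\id_H\otimes\epsilon)$ and precompose with $\Delta$, so that the counit law $(\id_H\otimes\epsilon)\circ\Delta=\id_H$ collapses the middle and yields $(a\otimes\hat{e})\circ\Delta=a\otimes\eta$. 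In each case the pattern is identical: factor through a degree-$0$ structure map, then invoke the matching (co)unit axiom.

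The only genuinely delicate point --- and the step I expect to be the main, if modest, obstacle --- is the bookkeeping of Koszul signs in the graded interchange law. Evaluating on a decomposable tensor $x\otimes y$ shows that $(\id_{A'}\otimes b)\circ(a\otimes\id_B)$ equals $a\otimes b$ only up to the factor $(-1)^{|a|\,|b|}$, so the first displayed equality must be read with the implicit Koszul sign convention used elsewhere in the paper. This subtlety is harmless in the applications: in the second, third and fourth assertions the map tensored with $a$ is always one of the degree-$0$ structure maps $\eta$, $\epsilon$, $\hat{e}$, so every such sign equals $+1$ and the equalities hold strictly. I would therefore verify the signs once on homogeneous tensors and then observe that all subsequent uses live in the degree-$0$ regime, where no signs intervene.
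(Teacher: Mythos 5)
Your proposal is correct and follows exactly the route the lemma's statement prescribes (graded interchange law plus the unit and counit axioms); the paper itself dismisses the proof as ``Straightforward,'' so your write-up simply fills in the intended details. Your observation about the Koszul sign is also right --- with the convention $(f\otimes g)\circ(f'\otimes g')=(-1)^{|g||f'|}(f\circ f')\otimes(g\circ g')$ only the factorization $(\id_{A'}\otimes b)\circ(a\otimes\id_B)$ acquires the factor $(-1)^{|a||b|}$, consistent with the paper's stated ``implicit Koszul sign'' convention, and it indeed trivializes in all three applications since $\eta$, $\epsilon$, and $\hat{e}$ have degree $0$.
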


\begin{proof}
Straightforward.
\end{proof}

\begin{proof}[Proof of the proposition]
Suppose that $X \circ \mu = \mu \circ (X \otimes \id + \id \otimes X)$.
Then
\begin{align*}
(X \conv i^*) \circ \mu
&= \mu \circ (X \otimes i^*) \circ m^* \circ \mu\\
&= \mu \circ (X \otimes i^*) \circ (\mu \otimes \mu) \circ \tau_{1324} \circ (m^* \otimes m^*)\\
&= \mu \circ \big( (X \circ \mu) \otimes (i^* \circ \mu) \big) \circ \tau_{1324} \circ (m^* \otimes m^*)\\
&= \mu \circ \big( ( \mu \circ (X \otimes \id + \id \otimes X)) \otimes (\mu \circ (i^* \otimes i^*) \circ \tau) \big) \circ \tau_{1324} \circ (m^* \otimes m^*)\\
&= \mu \circ (\mu \otimes \mu) \circ \big( (X \otimes \id + \id \otimes X) \otimes (i^* \otimes i^*) \big) \circ \tau_{1342} \circ (m^* \otimes m^*)
\end{align*}
Taking into account that $\mu$ is commutative\footnote{This is one of few cases where commutativity is needed.},
i.e. $\mu\circ\tau=\mu$ and thus $\mu \circ (\mu \otimes \mu)\circ \sigma =\mu \circ (\mu \otimes \mu)$
for any permutation $\sigma\in \mathbb{S}_4$, we obtain
\begin{align*}
(X \conv i^*) \circ \mu
&= \mu \circ ( X \conv i^* \otimes \id \conv i^* + \id \conv i^* \otimes X \conv i^*)\\
&= \mu \circ ( X \conv i^* \otimes \hat{e} + \hat{e} \otimes X \conv i^*)= X \conv i^* \otimes e^* + e^* \otimes X \conv i^*.
\end{align*}
Conversely, suppose that $\xi \circ \mu = \xi \otimes e^* + e^* \otimes \xi$.
Then
\begin{align*}
(\xi \conv \id) \circ \mu
&= \mu \circ (\xi \otimes \id) \circ m^* \circ \mu\\
&= \mu \circ (\xi \otimes \id) \circ (\mu \otimes \mu) \circ \tau_{1324} \circ (m^* \otimes m^*)\\
&= \mu \circ \big( (\xi \circ \mu) \otimes \mu \big) \circ \tau_{1324} \circ (m^* \otimes m^*)\\
&= \mu \circ \big( (\xi \otimes e^* + e^* \otimes \xi) \otimes \mu \big) \circ \tau_{1324} \circ (m^* \otimes m^*)\\
&= \mu \circ \big( \mu \circ (\xi \otimes e^* \otimes \id + e^* \otimes \xi \otimes \id) \otimes \id \big) \circ \tau_{1324} \circ (m^* \otimes m^*)\\
&= \mu \circ \big( \mu \circ (\xi \otimes \id \otimes e^* + e^* \otimes \id \otimes \xi) \otimes \id \big) \circ (m^* \otimes m^*)\\
&= \mu \circ \big( ( (\mu \circ (\xi \otimes \id)) \otimes e^* + e^* \otimes (\mu \circ (\id \otimes \xi)) ) \otimes \id \big) \circ (m^* \otimes m^*)\\
&= \mu \circ \big( (\mu \circ (\xi \otimes \id)) \otimes e^* \otimes \id + e^* \otimes \id \otimes (\mu \circ (\xi \otimes \id)) \big) \circ (m^* \otimes m^*)\\
&= \mu \circ (\xi \conv \id \otimes \id + \id \otimes \xi \conv \id)
\end{align*}
as wanted.
\end{proof}

%The Maurer--Cartan isomorphism restricts to the linear isomorphism $\omega^R \colon \linvvf(\calG) \longisomto \left\{ \xi \in \Der_e(\calC_\calG,\calC_\calG) \mid m^* \circ \xi = \xi \otimes 1 \right\} \simeq \mathfrakg$...

\begin{defi}
The \define{adjoint action} $\Ad \colon H \mapsto H \otimes H$ of a Hopf algebra $H$ by
\begin{equation}
\Ad =  (\mu \otimes \id) \circ (\id \otimes \id \otimes i^*) \circ {m^*}^{(3)}
\end{equation}
and the set of 1-cocycles of a graded Lie group $\calG$ by
\begin{equation}
Z^1(G,\Ad) = \left\{ \xi \in \Der_e(\calC_G,\calC_G) \mid m^* \circ \xi = \xi \otimes 1 + (\xi \otimes \id) \circ \tau \circ \Ad \right\}.
\end{equation}
\end{defi}
%example 2.7 , Majid_Algebras and Hopf algebras in braided categories and Appendix of Majid_Quantum and braided linear algebra

\begin{prop}\label{prop:MC-1-isomorphism}
The Maurer--Cartan isomorphism restricts to the linear isomorphism
\begin{equation}
\omega^R \colon \multvf(\calG) \longisomto Z^1(\calG,\Ad)
\end{equation}
\end{prop}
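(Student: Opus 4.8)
The plan is to leverage Proposition~\ref{prop:MC}, which already gives that the right Maurer--Cartan map $\omega^R=\bullet\conv i^*$ is a linear isomorphism $\vf(G)\longisomto\Der_e(\calC_G,\calC_G)$ with inverse $\bullet\conv\id$. Since the full-space bijectivity is in hand, it suffices to show that, under this isomorphism, the multiplicativity condition on $X$ corresponds exactly to the cocycle condition on $\xi:=\omega^R(X)=X\conv i^*$. In other words, the entire content is to verify the equivalence
$$(X\otimes\id+\id\otimes X)\circ m^*=m^*\circ X\quad\Longleftrightarrow\quad m^*\circ\xi=\xi\otimes 1+(\xi\otimes\id)\circ\tau\circ\Ad,$$
which is the graded counterpart of Proposition~\ref{prop:MCiso}. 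Recall also that, $\calG$ being a graded Lie group, the preceding proposition guarantees $X_e=e^*\circ X=0$ for any multiplicative $X$, a fact I will use freely.

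First I would establish the forward implication by a direct Hopf-algebra computation. Starting from $\xi=\mu\circ(X\otimes i^*)\circ m^*$, I compute $m^*\circ\xi$ by pushing $m^*$ through $\mu$ with the bialgebra compatibility $m^*\circ\mu=(\mu\otimes\mu)\circ(\id\otimes\tau\otimes\id)\circ(m^*\otimes m^*)$, which produces $(m^*\circ X)\otimes(m^*\circ i^*)$ after reordering the four factors. Into this I substitute the multiplicativity relation $m^*\circ X=(X\otimes\id+\id\otimes X)\circ m^*$ and the antipode antimorphism relation $m^*\circ i^*=(i^*\otimes i^*)\circ\tau\circ m^*$. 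Reassociating the resulting coproduct factors by coassociativity yields the triple comultiplication ${m^*}^{(3)}$, and commutativity of $\mu$ (exactly as invoked in the proof of Proposition~\ref{prop:MC}) collapses the permutations. The summand coming from $X\otimes\id$ reduces, via the counit law $(\id\otimes e^*)\circ m^*=\id$, to $\xi\otimes 1$, while the summand coming from $\id\otimes X$ recombines, after recognising the pattern $(\mu\otimes\id)\circ(\id\otimes\id\otimes i^*)\circ{m^*}^{(3)}$, into $(\xi\otimes\id)\circ\tau\circ\Ad$. This is precisely the cocycle condition defining $Z^1(\calG,\Ad)$.

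For the converse I would run the symmetric computation: starting from a cocycle $\xi$, set $X=\xi\conv\id=(\omega^R)^{-1}(\xi)$ and check $(X\otimes\id+\id\otimes X)\circ m^*=m^*\circ X$ using the same axioms together with the identities~(\ref{first_mu-m-v}) and~(\ref{second_mu-m-v}) established earlier. In fact, since every step of the forward chain is an application of an identity of maps (bialgebra compatibility, coassociativity, commutativity, the antipode and counit laws), the only non-tautological substitution being the multiplicativity relation, the chain can also be read backwards; either route, combined with the bijectivity from Proposition~\ref{prop:MC}, yields the claimed restriction $\omega^R\colon\multvf(\calG)\longisomto Z^1(\calG,\Ad)$.

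The main obstacle is the combinatorial bookkeeping. The single relation $m^*\circ X=(X\otimes\id+\id\otimes X)\circ m^*$ splits $m^*\circ\xi$ into two summands that must be routed to the two \emph{different} pieces $\xi\otimes 1$ and $(\xi\otimes\id)\circ\tau\circ\Ad$, and doing so requires tracking the $\mathbb{S}_4$-permutations of tensor factors (and, after invoking coassociativity, the larger reordering on six factors) together with their Koszul signs. The delicate point is recognising, after the reassociation, the precise arrangement $(\mu\otimes\id)\circ(\id\otimes\id\otimes i^*)\circ{m^*}^{(3)}$ that defines $\Ad$; this recombination is where commutativity of $\mu$ is genuinely used, exactly as flagged in the remark following Proposition~\ref{prop:exact}.
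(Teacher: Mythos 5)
Your proposal is correct in substance, but it takes a genuinely different route from the paper. You reduce everything to Proposition~\ref{prop:MC} plus a direct degree-one computation: push $m^*$ through $\mu \circ (X \otimes i^*) \circ m^*$ via the bialgebra compatibility, substitute the multiplicativity relation and the antipode antimorphism property, and reassemble the two resulting summands into $\xi \otimes 1$ and $(\xi \otimes \id) \circ \tau \circ \Ad$; this is the honest graded transcription of the classical Proposition~\ref{prop:MCiso}, and it goes through. The paper instead proves a strictly more general statement: Lemma~\ref{lem:MC_interchange} shows that the maps $\omega^R_n$ of (\ref{eqn:general_MC}) intertwine \emph{each} coface operator $\delta_i$ of the cochain complex of an arbitrary $H$-bicomodule $V$ with the coface operators $\delta_i^{new}$ of the complex twisted by the adjoint-type coaction (\ref{eqn:new_rho}); specializing to $V = H$ with its standard bicomodule structure, $\omega^R$ becomes an isomorphism of entire cochain complexes, and the proposition is the $n = 1$ instance --- the $\xi \otimes 1$ term arising from the trivialized right coaction and the $(\xi \otimes \id) \circ \tau \circ \Ad$ term from $\rho_V^{new}$. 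Your approach buys a short, self-contained verification using nothing beyond the Hopf axioms already deployed in Proposition~\ref{prop:MC}; the paper's buys the identification of cocycles in \emph{all} degrees at once (hence of the full $\Ad$-twisted cohomology), at the cost of setting up the bicomodule machinery --- which the paper then verifies, much as you would have to, by a Sweedler-notation computation in the dualized, non-graded picture, so the sign bookkeeping you flag is not actually made more explicit there either. Two small corrections to your write-up: the collapse of the $X \otimes \id$ summand to $\xi \otimes 1$ is not achieved by the counit law alone --- in Sweedler terms the middle pair contracts as $\sum a^{(2)}\, i^*(a^{(3)}) = e^*(\cdot)\, 1$, which is the antipode identity (\ref{antipode_convolution}), with the counit law only cleaning up afterwards; and the fact $X_e = 0$, which you announce you will use freely, is never actually needed in either direction (classically, too, $\xi(e) = 0$ is a consequence of the cocycle identity rather than an input).
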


\begin{proof}
Prop. \ref{prop:MC-1-isomorphism} is a straightforward analogue of Prop. \ref{prop:MCiso},
phrased in terms of convolution products. We shall prove it using a more general fact.

%\vskip 2mm
\noindent Let $V$ be an $H-$bicomodule
with left and right coactions $\rho_V^L\colon V\to H\otimes V$ and $\rho_V^R\colon V\to V\otimes H$, respectively,
and let $C^n (V,H)=\Hom (V,H^{\otimes n})$ with coface operators $\delta_i\colon C^n (V,H)\to C^{n+1} (V,H)$, such that
\begin{eqnarray}\label{eqn:coface_i}
\delta_i (c)=
\left\{
\begin{array}{cc}
(\id \otimes c)\circ \rho_V^L\,, &  i=0 \\
\left(\id^{\otimes (i-1)}\otimes m^* \otimes\id^{\otimes (n-i)}\right)\circ c \,, & 1\le i\le n\\
(c\otimes\id)\circ \rho_V^R \, , & i=n+1
\end{array}
\right.
\end{eqnarray}
The alternate sum of $\delta_i$
\begin{equation}
\delta=\sum\limits_{i=0}^{n+1} (-1)^i\delta_i
\end{equation}
is a nilpotent operator, i.e. $\delta^2=0$.
Using that $\mu\colon H\otimes H\to H$ is a morphism of bialgebras and the antipode map $i^*$ is an anti-comorphism, i.e. $m^*\circ i^* = i^*\otimes i^* \circ\tau\circ m^*$,
we construct a new left $H-$comodule structure on $V$:
\begin{equation}\label{eqn:new_rho}
\rho_V^{new} = (\mu\otimes\id)\circ(\id\otimes\tau)\circ\left(\id^{\otimes 2}\otimes i^*\right)\circ\left(\rho_V^L\otimes\id\right)\circ \rho_V^R
\end{equation}
Now $V$ has a new bicomodule structure, where the left coaction is given by $\rho_V^{new}$, while the right comodule structure is the trivial one $\id\otimes 1\colon V\to V\otimes H$.
Therefore there exist new $\delta_i^{new}$ and the differential $\delta^{new}=\sum\limits_{i=0}^{n+1} (-1)^i\delta_i^{new}$. Define
$\omega^R_n\colon C^n (V,H)\to C^n (V,H)$ by the formula
\begin{equation}\label{eqn:general_MC}
\omega^R_n (c) = \mu_{H^{\otimes n}}\circ \left(c\otimes (m^*)^{n}\circ i^*\right)\circ\rho_V^R\,,
\end{equation}
where
\begin{eqnarray}
(m^*)^{n} =\left\{
\begin{array}{cc}
\id \, , & n=1 \\
\left( m^*\otimes \id^{\otimes (n-2)}\right) \circ\ldots\circ \left( m^*\otimes \id\right)\circ  \circ m^*  \, , & n\ge 2
\end{array}
\right.
\end{eqnarray}
and
\begin{equation}
\mu_{H^{\otimes n}}=\mu^{\otimes n}\circ\tau_{1n+1 \ldots n2n}\colon H^{\otimes n}\otimes H^{\otimes n}\to  H^{\otimes n}
\end{equation}
is the canonical extension of the multiplication
$\mu$ to the $n-$the tensor power of $H$.

\begin{lem}\label{lem:MC_interchange} One has for all $n$ and $i=0,\ldots, n+1$
\begin{equation}\label{eqn:MC_interchange}
\omega^R_{n+1}\circ \delta_i = \delta^{new}_i \omega^R_{n}
\end{equation}
\end{lem}
\begin{proof} The proof is canonical and straightforward. To make it more intuitive and visual, we "dualize" the picture by considering of $H-$(bi)modules instead of $H-$(bi)comodules,
$C^n(H,V)=\Hom (H^{\otimes n}, V)$ instead of $C^n(V,H)$ and by assuming that $H$ is non-graded. We denote $ab=\mu (a,b)$, $\rho_V^L(a,v)=av$ and
$\rho_V^R(v,a)=va$ for $a,b\in H, v\in V$, where $\rho_V^L \colon H\otimes V\to V$ and $\rho_V^R \colon V\otimes H\to H$ are the left- and right- $H-$module
structures on $V$, respectively.  Now the dual analogue of (\ref{eqn:coface_i}) is
\begin{eqnarray}
\delta_i (c)=
\left\{
\begin{array}{cc}
\rho_V^L\circ (\id \otimes c)\,, &  i=0 \\
c\circ \left(\id^{\otimes (i-1)}\otimes\mu \otimes\id^{\otimes (n-i)}\right) \,, & 1\le i\le n\\
\rho_V^R\circ (c\otimes\id) \, , & i=n+1
\end{array}
\right.
\end{eqnarray}
or, more explicitly,
\begin{eqnarray}
\delta_i (c) (a_1, \ldots a_n) =
\left\{
\begin{array}{cc}
a_1 c(a_2, \ldots , a_n+1)\,, &  i=0 \\
c(\ldots, a_i a_{i+1}, \ldots) \,, & 1\le i\le n+1\\
c(a_1, \ldots , a_{n})a_{n+1} \, , & i=n+1
\end{array}
\right.
\end{eqnarray}
for all $a_1, \ldots, a_{n+1}\in H$. By use of the Sweedler notation (cf.~\cite{kass})
$m^* (a)=\sum a'\otimes a''$ we rewrite the dual analogue of \ref{eqn:new_rho}
\begin{equation}
\rho_V^{new} =\rho_V^R\circ \left(\rho_V^L\otimes\id\right)
\circ\left(\id^{\otimes 2}\otimes i^*\right)
\circ(\id\otimes\tau)\circ
 (m^*\otimes\id)
\end{equation}
as follows:
$$
\rho_V^{new} (a,v)=\sum a'v\, i^*(a'') \, .
$$
Likewise, the dual analogue of \ref{eqn:general_MC}
\begin{equation}
\omega^R_n (c) =\rho_V^R\circ \left(c\otimes  i^*\circ (\mu)^{n}\right) \circ (m^*)_{H^{\otimes n}}
\end{equation}
where
\begin{eqnarray}
\mu^{n} =\left\{
\begin{array}{cc}
\id \, , & n=1 \\
\mu\circ \left( \mu\otimes \id\right)\circ\ldots\circ\left( \mu\otimes \id^{\otimes (n-2)}\right)\, , & n\ge 2
\end{array}
\right.
\end{eqnarray}
and where
\begin{equation}
(m^*)_{H^{\otimes n}}=\tau_{1n+1 \ldots n2n}\circ (m^*)^{\otimes n}\colon H^{\otimes n}\to  H^{\otimes n}\otimes  H^{\otimes n}
\end{equation}
is the canonical extension of the comultiplication
$m^*$ to the $n-$the tensor power of $H$,
admits the following explicit form:
$$
\omega^R_n (c)(a_1, \ldots, a_n)=\sum c(a_1', \ldots, a_n')i^* (a_1''\ldots a_n'')\,,
$$
where $a_i\in H$ for $1\le i\le n$ and $m^*(a_i)=\sum a_i'\otimes a_i''$ (in Sweedler notations). This allows to simplify computations. Indeed,
$$
\omega^R_{n+1}(\delta_0 c) (a_1, \ldots, a_n) = \sum a_1' c(a_2'\ldots, a_{n+1}') i^*(a_1''a_2''\ldots a_{n+1}'')
$$
From the anti-morphism property of $i^*$, we immediately get
\begin{align*}
\omega^R_{n+1}(\delta_0 c) (a_1, \ldots, a_n)
&= \sum a_1' c(a_2'\ldots, a_{n+1}') i^*(a_2''\ldots a_{n+1}'')i^*(a_1'') \\
&=\sum a_1' \omega^R_{n} (c)(a_2\ldots, a_{n+1}) i^*(a_1'')=\delta^{new}_0\left( \omega^R_{n}(c)\right)(a_1, \ldots , a_{n+1})\,.
\end{align*}
On the other hand,
\begin{align*}
\omega^R_{n+1}(\delta_{n+1} c) (a_1, \ldots, a_n)
&= \sum  c(a_2'\ldots, a_{n+1}') a_{n+1}' i^*(a_1''a_2''\ldots a_{n+1}'') \\
&=\sum  c(a_2'\ldots, a_{n+1}') a_{n+1}' i^*( a_{n+1}')  i^*(a_1''\ldots a_{n}'')=\delta^{new}_{n+1}\left( \omega^R_{n}(c)\right)(a_1, \ldots , a_{n+1})
\end{align*}
since $\sum a' i^{*}(a'')=e^* (a)1$ for any $a\in H$. The proof of the identity $\omega^R_{n+1}\circ \delta_i = \delta^{new}_i \omega^R_{n}$
for $i=1, \ldots, n$ is equally easy.
\end{proof}

%\vskip 2mm
\noindent The proof of Proposition \ref{prop:MC-1-isomorphism} will follow from Proposition \ref{prop:MC} and
 Lemma \ref{lem:MC_interchange} by assuming that $V=H$
together with the standard left- and right- comodule structure on it.
\end{proof}

\newpage

%%%%%%%%%%%%%%%%%%%%%%%%%%%%%%%%%%%%%%%%%%%%%%%%%%%%%%%%%%%%%
%%%%%%%%%%%%%%%%%%%%%%%%%%%%%%%%%%%%%%%%%%%%%%%%%%%%%%%%%%%%%
%%%%%%%%%%%%%%%%%%%%%%%%%%%%%%%%%%%%%%%%%%%%%%%%%%%%%%%%%%%%%
\section{Differential graded Lie groups} \label{DGLG}
In this short section we introduce the second ingredient of the differential graded Lie groups/algebras, namely the differential.

\subsection{Differential graded manifolds}

Recall that the starting point to define gradings in section \ref{GLG} was the commutative monoid $\Gamma$ with a particular element that we were calling $0$.
We suppose that it has an element that, together with its opposite if it exists, generates $\Gamma$,
we call it 1.
In the cancellative case, the only possibilities (up to isomorphism) are $(\ZZ,1)$, $(\NN,1)$ and $(\ZZ/n\ZZ,1)$

\begin{defi}
A \define{$Q$-structure} or equivalently a \define{homological vector field} on a graded manifold is a derivation of its structure sheaf of degree 1 which squares to zero.
A \define{differential graded (dg) manifold}   (equivalently, \define{$Q$-manifold}) is a graded manifold with a homological vector field.
\end{defi}

A \define{morphism of dg manifolds} is a morphism of graded manifolds which relates the homological vector fields in the following sense: given $f \colon (\calM_1,Q_1) \to (\calM_2,Q_2)$, recall that $f^\sharp \colon \tilde f^*(\gradedF(\calM_2)) \to  \gradedF(\calM_1)$.
We require that $f^\sharp \circ \tilde f^* \circ Q_2 =  Q_1 \circ f^\sharp \circ \tilde f^*$.

In this paper, the focus is mainly on $\N$-graded $Q$-manifolds and their morphisms (see also \cite{NP6}).

The \define{product of dg manifolds} as a graded manifold has a natural homological vector field.
One just checks that if $Q_1, Q_2$ are homological, so is $Q_1 \otimes \id + \id \otimes Q_2$.

Therefore we see that the above condition for multiplicativity of a vector field on a graded Lie group (\ref{eq:multvf2}) means exactly that multiplicaton $m^*$ is a dg morphism.

These definitions and observations combine into:
\begin{prop}
The category of dg manifolds is cartesian monoidal.

\end{prop}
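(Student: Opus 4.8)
The plan is to prove the stronger statement that the category of dg manifolds is \emph{cartesian}, i.e.\ has all finite products; by the discussion of Section~\ref{IntNGDLA}, a cartesian category is canonically cartesian monoidal, with monoidal product the categorical product and monoidal unit the terminal object. Since the underlying graded manifolds already form a cartesian monoidal category, the natural strategy is to show that the forgetful functor to graded manifolds \emph{creates} finite products: the underlying graded manifold of a product is the product of the underlying graded manifolds, equipped with the homological vector field singled out above. I would work at the level of structure sheaves, writing $f^* := f^\sharp\circ\tilde f^*$ for the pullback of a morphism $f$, so that dg-compatibility reads $f^*\circ Q_2 = Q_1\circ f^*$.

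First I would treat the terminal object, namely the point $(\{\ast\},0)$, whose structure sheaf is $\RR$ and whose only degree-$1$ derivation is $0$, trivially homological. For any dg manifold $(\calM,Q)$ the unique graded-manifold morphism to the point pulls back to the unit $\eta\colon\RR\to\gradedF(\calM)$, and the dg condition $Q\circ\eta = \eta\circ 0 = 0$ holds because a derivation vanishes on scalars. Hence $(\{\ast\},0)$ is terminal, and it serves as the monoidal unit.

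Then I would treat binary products. Given $(\calM_1,Q_1)$ and $(\calM_2,Q_2)$, I equip $\calM_1\times\calM_2$ with $Q=Q_1\otimes\id+\id\otimes Q_2$, which is homological by the check already recorded before the statement. The projection onto $\calM_1$ pulls back to $a\mapsto a\otimes 1$, and since $Q_2$ kills scalars one has $Q(a\otimes 1)=Q_1 a\otimes 1 + (-1)^{|a|}a\otimes Q_2(1)=Q_1 a\otimes 1$, which is exactly dg-compatibility of $\proj_1$; the argument for $\proj_2$ is symmetric. For the universal property I would take a dg manifold $(\calN,Q_\calN)$ with dg morphisms $f_i\colon(\calN,Q_\calN)\to(\calM_i,Q_i)$, use the unique graded-manifold morphism $f=\langle f_1,f_2\rangle$ into the product, which pulls back to $f^*(a\otimes b)=f_1^*(a)\,f_2^*(b)$, and verify that it is a dg morphism. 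The Leibniz rule for $Q_\calN$ together with $Q_\calN\circ f_i^* = f_i^*\circ Q_i$ gives
\[
Q_\calN\big(f_1^*a\,f_2^*b\big)=(f_1^*Q_1 a)\,f_2^*b+(-1)^{|a|}f_1^*a\,(f_2^*Q_2 b)=f^*\big((Q_1\otimes\id+\id\otimes Q_2)(a\otimes b)\big),
\]
so that $f^*\circ Q = Q_\calN\circ f^*$, exhibiting $f$ as the required mediating dg morphism, unique already as a graded-manifold morphism.

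The main obstacle will be purely the Koszul sign bookkeeping: one must confirm that the two odd derivations $Q_1,Q_2$ anticommute across the tensor factors so that $Q^2=0$, and that the Leibniz expansion in the universal-property step reproduces $Q_1\otimes\id+\id\otimes Q_2$ term by term. Conceptually there is no difficulty, since everything reduces to the already-established cartesian structure on graded manifolds plus the single observation that an odd, square-zero derivation on a tensor product of two such factors is the sum of the factor derivations.
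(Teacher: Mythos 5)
Your proposal is correct and follows essentially the same route as the paper: equip the product of the underlying graded manifolds with $Q_1 \otimes \id + \id \otimes Q_2$, check it is homological, and inherit the cartesian monoidal structure from the category of graded manifolds. The paper leaves the verification at the level of this observation (``these definitions and observations combine''), so your explicit checks of the terminal object, the dg-compatibility of the projections, and the universal property merely supply details the paper treats as immediate.
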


\subsection{Differential graded Lie groups}

\begin{defi}
The category of \define{differential graded (dg) Lie groups} is the category of monoidal objects in the category of dg manifolds which are groups.
\end{defi}

Morphisms of dg Lie groups are defined in the natural way, and we thus obtain a \define{category of dg Lie groups} $\cat{dgLieGrp}$.

The body of a dg Lie group is a Lie group, and we have a ``body'' functor\\
$|{\;\cdot\;}| \colon \cat{dgLieGrp} \to \cat{LieGrp}$.

\subsubsection*{Example: The shifted tangent dg Lie group of a Lie group}

Let $M$ be a manifold.
We define the \define{shifted tangent bundle} $T[1]M$ as the algebra-ed space with underlying space $M$ and structure sheaf defined by
\begin{equation}
\calO_{T[1]M} (U) = \Omega^\bullet(U)
\end{equation}
the vector bundle of differential forms, for $U \subseteq M$ open, with the natural $\NN$-grading, and obvious restriction maps.
%Note: we could define $T[\gamma]M$, or say that we obtain it from $T[1]M$ by the morphism $\NN \to \Gamma$ which sends 1 to $\gamma$.

This is an $\NN$-graded manifold: if $U \subseteq M$ is the domain of a chart $\phi \colon U \to V$, then
\begin{equation}
\calO_{T[1]M} (U) \simeq \smoothF(\phi(U)) \otimes S V[1]^*.
\end{equation}
Its body is obviously $|T[1]M| = M$ itself.

This is a dg-manifold with homological vector field
$Q = d_{DR}
$, given by the De Rham differential.
More precisely, if $f \in \calO_{T[1]M}$, then locally one can consider $f \in \Omega^\bullet(U)$, and $Q_{DR}f$ then corresponds to $df \in \Omega^{\bullet+1}(U)$ (this is a legitimate definition since vector fields are local operators).

If $M$ is a Lie group $G_0$ with multiplication $m$, then $G = T[1]G_0$ is a dg Lie group with multiplication $T[1]m$ which we now define.
This will define the functor $T[1]$ from Lie groups to dg Lie groups.

The unit $e \colon \{\ast\} \to G$ is ``the same'' as that of $G_0$, that is, it is the composition $e \colon \{\ast\} \to G_0 \hookrightarrow G$, by which we mean that $e \colon \calC(G) \to \calC^\infty(\{\ast\}) = \RR$ is the evaluation at the unit $e \in G_0$ of the degree 0 component of a function on $G$.
This unit is  a dg morphism (of degree 0): it is graded, and preserves the homological vector fields.
Indeed, the homological vector field on $\{\ast\}$ is 0, so the condition reads
$(\calC(G) \to^Q \calC(\calN) \to^{e^*} \RR) = (\calC(G) \to^{e^*} \RR \to^0 \RR)$.
The right-hand side is obviously the zero map, so this means that the evaluation at $e$ of the degree 0 component of any function $Q(f)$ has to be zero.
This is obviously true since $Q$ is of degree 1 and $G$ is nonnegatively graded.

As for multiplication, if $m \colon G_0 \times G_0 \to G_0$ is the multiplication of $G_0$, then $T[1]m$ is naturally defined as follows:
If $f \in \calC(G)_0$, then $(mf)(x,y) = f(xy)$ for $x, y \in G_0$.
If $f = f_i(.) e^i$ where  $(e^i)$ is a basis of $\mathfrakg$ and the Einstein summation convention over repeating indeces is assumed,
then $((T[1]m)f)(x,y) =  f_i(xy) \big( (T_eL_x)_j^i e_2^j +  (T_eR_y)_j^i e_1^j \big)$.

By a straightforward computation (for degree 0 and 1) one shows that
 $(Q,Q) \circ m^* = m^* \circ Q$.

Summarizing, we obtain the following:
\begin{prop}
The dg-manifold $G = T[1]G_0$ with the above unit and multiplication is a dg Lie group.
\end{prop}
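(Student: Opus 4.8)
The plan is to show that $\bigl(G,\, T[1]m,\, e\bigr)$ is a group object in the category of dg manifolds, which is exactly what ``dg Lie group'' means by definition. Three of the ingredients are already in place from the preceding discussion: $G = T[1]G_0$ is a dg manifold with homological vector field $Q = d_{DR}$, the unit $e$ is a dg morphism, and the identity $(Q,Q)\circ m^* = m^*\circ Q$ exhibits $T[1]m$ as a dg morphism. What remains is to verify the monoid axioms (associativity and unitality) and to produce a compatible inverse, thereby upgrading the monoid structure to a group structure.

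The cleanest route is to realise $T[1]$ as a product-preserving functor from the category of manifolds to the category of dg manifolds, and then invoke the general principle that such a functor carries group objects to group objects. Concretely, I would first record functoriality: a smooth map $\phi\colon M\to N$ induces $T[1]\phi$ whose pullback on functions is the pullback $\phi^*\colon \Omega^\bullet(N)\to\Omega^\bullet(M)$ on forms, and naturality of the De Rham differential, $\phi^*\circ d_{DR}=d_{DR}\circ\phi^*$, says precisely that $T[1]\phi$ relates the homological vector fields, i.e. is a morphism of dg manifolds. Second, I would check monoidality: under the Künneth-type identification $\Omega^\bullet(M\times N)\simeq \Omega^\bullet(M)\compTens\Omega^\bullet(N)$ the De Rham differential corresponds to $d_{DR}\otimes\id+\id\otimes d_{DR}$, which is exactly the homological vector field of the product dg manifold $T[1]M\times T[1]N$; together with $T[1](\{\ast\})=(\{\ast\},0)$, the terminal object, this makes $T[1]$ strong (product-preserving) monoidal.

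With these two facts the conclusion is immediate. The Lie group structure on $G_0$ is encoded by the maps $m$, $e$, $\inv$ together with the commuting diagrams expressing associativity, unitality and the inverse law in the category of manifolds; applying the product-preserving functor $T[1]$ reproduces all of these diagrams for $T[1]m$, $T[1]e=e$ and $T[1]\inv$ in the category of dg manifolds. Hence $G = T[1]G_0$ is a group object there, which is exactly a dg Lie group. In particular the inverse of $G$ is $T[1]\inv$, automatically a dg morphism by the naturality step, and the associativity and unitality conditions recorded explicitly above for $T[1]m$ are special cases of this transport of diagrams.

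I expect the main obstacle to be the compatibility with the homological vector field, that is, checking $(Q,Q)\circ m^*=m^*\circ Q$ and, more generally, that the explicit degree-$0$ and degree-$1$ formula given above for $T[1]m$ genuinely agrees with the pullback $m^*$ on forms. This is the one place where the De Rham differential interacts nontrivially with the group multiplication; conceptually it is just naturality of $d_{DR}$, but to reconcile it with the explicit formula one must unwind the identification of $Q$ on the product with $d_{DR}\otimes\id+\id\otimes d_{DR}$ under the Künneth isomorphism. Once this compatibility is secured, associativity, unitality and the existence of a compatible inverse are inherited from $G_0$ for free, and the proposition follows.
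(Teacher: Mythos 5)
Your proposal is correct, but it proves the proposition by a genuinely different route than the paper. The paper works concretely: it defines $T[1]m$ by an explicit formula on functions of degree $0$ and $1$ (writing a degree-$1$ function as $f_i(\cdot)e^i$ and giving $((T[1]m)f)(x,y)$ in terms of $T_eL_x$ and $T_eR_y$), checks that the unit is a dg morphism by the degree argument, and then disposes of the key compatibility $(Q,Q)\circ m^* = m^*\circ Q$ by ``a straightforward computation (for degree $0$ and $1$)''; the monoid axioms are left implicit. You instead package everything functorially: $T[1]$ is a functor from manifolds to dg manifolds with $(T[1]\phi)^\sharp = \phi^*$ on forms, naturality $\phi^*\circ d_{DR} = d_{DR}\circ\phi^*$ makes every $T[1]\phi$ a dg morphism, and the K\"unneth identification $\Omega^\bullet(M\times N)\simeq \Omega^\bullet(M)\compTens\Omega^\bullet(N)$ (under which $d_{DR}$ becomes $d_{DR}\otimes\id+\id\otimes d_{DR}$, matching the paper's product of dg manifolds, and which is consistent with the Fr\'echet-nuclear framework of the paper's appendix) makes $T[1]$ product-preserving, so group objects transport to group objects. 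Your approach buys several things the paper's does not make explicit: the associativity and unit laws for $T[1]m$ come for free from those of $G_0$, the inverse $T[1]\inv$ is produced automatically as a dg morphism (the paper's statement mentions only unit and multiplication), and the multiplicativity of $Q$ is reduced to naturality of $d_{DR}$ rather than a coordinate check. The one point of contact you rightly flag is the identification of the explicit formula for $T[1]m$ with the pullback $m^*$ on forms; this is immediate from $T_{(x,y)}m(v,w)=T_yL_x(w)+T_xR_y(v)$, so the paper's formula is literally $m^*$ on $1$-forms and the two proofs verify the same identity. What the paper's concrete presentation buys in exchange is the explicit description of $m^*$ in terms of left and right translations, which is reused later (e.g., in identifying the {\small DGLA} of $T[1]G_0$ as $\mathfrakg[1]\oplus\mathfrakg$); your argument, while cleaner, does not exhibit this formula. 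No gap in your reasoning: the two steps you isolate (naturality of $d_{DR}$ and monoidality of $T[1]$) are both standard and correct.
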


\subsubsection*{Example: The Chevalley--Eilenberg dg Lie group of a Lie algebra or a {\small DGLA}}

\textbf{Case of a Lie algebra.}
If $\mathfrakg$ is a Lie algebra, its Chevalley--Eilenberg cochain complex, $\bigwedge \mathfrakg^*$, can be viewed as the algebra (with the wedge product) of functions on the $\NN$-graded manifold $\CE(\mathfrakg)$.
Namely,
\begin{equation}
\calC(\CE(\mathfrakg)) = S \mathfrakg[1]^* = S \mathfrakg^*[-1].
\end{equation}
In particular, its body is a point.
This $\NN$-graded manifold can be made into a dg manifold with homological vector field
$Q = d_{CE}$, called the Chevalley--Eilenberg differential.
The usual Lie algebra bracket is then recovered as the $Q$-derived bracket of degree $-1$ vector fields -- the simplest example of
the derived bracket construction \cite{YKSbig}; and $Q^2 = 0$ corresponds precisely to the Jacobi identity of $\mathfrakg$.

This dg manifold $\CE(\mathfrakg)$ can be made into a commutative dg Lie group, defining the multiplication as the coproduct.
Namely, define
\begin{align*}
m^* \colon \bigwedge \mathfrakg^* &\longrightarrow  \bigwedge \mathfrakg^* \otimes \bigwedge \mathfrakg^*\\
f &\mapsto f \otimes 1 + 1 \otimes f
\end{align*}
on generators $f \in \mathfrakg^*$, which is enough by imposing that $m^*$ be an  algebra morphism which is unital, so $m^*(1) = 1 \otimes 1$
Define the unit $e^* \colon \bigwedge \mathfrakg^* \to \RR$ as the projection to the degree $0$ component, which is a unital algebra morphism.

The right-unit law reads
$( \id \otimes e^* ) \circ m^* = \proj_1^* \colon \bigwedge \mathfrakg^* \to \bigwedge \mathfrakg^*  \otimes \RR$, that is,
for $f \in \mathfrakg^*$,
$( \id \otimes e^* ) (f \otimes 1 + 1 \otimes f)
= f \otimes 1 + 1 \otimes 0
= f \otimes 1
= \proj_1^*(f)$, and similarly for the left-unit law.
Checking associativity is similar, and exactly the same as for the usual coproduct.
Moreover, the multiplication is obviously commutative, in the sense that $\tau \circ m^* = m^*$ where $\tau$ is the flip.

The inverse is given on generators by $inv \colon f \mapsto f_0 - f$, that is, $inv = i \circ e - \id$ where $i \colon \RR \to \bigwedge \mathfrakg^*$ is uniquely defined.
%or is it given by $(-1)^n \id$ in degree $n$?
This is the only dg Lie group structure here (cf. Cartier--Milnor--Moore theorem).

As for the multiplicativity of $Q$, recall that it induces on $\CE(\mathfrakg) \times \CE(\mathfrakg)$ the homological vector field $Q \otimes \id + \id \otimes Q$.
Then we have to check that $(Q \otimes \id + \id \otimes Q) \circ m^* = m^* \circ Q \colon \bigwedge \mathfrakg^* \to \bigwedge \mathfrakg^* \otimes \bigwedge \mathfrakg^*$.
If $f \in \mathfrakg$, then
$(Q \otimes \id + \id \otimes Q) (f \otimes 1 + 1 \otimes f) = Qf \otimes 1 + 1 \otimes Qf = m^*(Qf)$. \\
To summarize, we have proved:

\begin{prop}
The graded manifold $\CE(\mathfrakg)$ with the homological vector field $Q = d_{CE}$ and unit and multiplication as above is a commutative dg Lie group, called the \define{Chevalley--Eilenberg dg Lie group} of the Lie algebra $\mathfrakg$.
\end{prop}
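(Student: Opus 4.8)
The plan is to establish three things in turn: that the graded manifold $\CE(\mathfrakg)$ carries a commutative graded Lie group structure, that $Q = d_{CE}$ is a genuine homological vector field, and that $Q$ is multiplicative for that structure; the compatibilities of $Q$ with the unit and the inverse will then come for free. The guiding principle throughout is that every map in sight ($m^*$, $e^*$, the antipode $i^*$, and $Q$) is either a unital algebra morphism or a derivation, hence is determined by its values on the generators $\mathfrakg^*$. Consequently each identity to be proved reduces to a check on a single generator $f \in \mathfrakg^*$, after which it propagates to all of $\bigwedge \mathfrakg^*$ either by multiplicativity or by the (relative) Leibniz rule.

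First I would dispatch the purely algebraic structure. With $m^*$ defined to be primitive on generators, $e^*$ the projection onto the degree-$0$ part, and the antipode $i^* \colon f \mapsto -f$ on generators, the counit laws, coassociativity, the bialgebra compatibilities, and the antipode identity all reduce to one-line verifications on a generator and then extend because $m^*$, $e^*$, $i^*$ are unital algebra morphisms (the associativity and coassociativity computations being literally those of the standard symmetric/exterior Hopf algebra). Commutativity, $\tau \circ m^* = m^*$, is likewise immediate on generators and extends multiplicatively; uniqueness of this structure is the content of the Cartier--Milnor--Moore theorem. Next, $Q = d_{CE}$ is by construction a derivation of degree $1$, and $Q^2 = 0$ is equivalent to the Jacobi identity for $\mathfrakg$ via the derived-bracket description recalled above, so $\CE(\mathfrakg)$ is a dg manifold. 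Finally, once multiplicativity of $Q$ is in hand, compatibility of $Q$ with the unit ($e^* \circ Q = 0$) and with the inverse ($i^* \circ Q = Q \circ i^*$) follows automatically from the graded analogue of Proposition~\ref{prop:multComp} established above, so no separate argument is needed.

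The step I expect to be the main obstacle is the multiplicativity of $Q$, i.e. the identity $(Q \otimes \id + \id \otimes Q) \circ m^* = m^* \circ Q$. Here the reduction to generators is still available, since both sides are $m^*$-relative derivations of $\bigwedge \mathfrakg^*$ into $\bigwedge \mathfrakg^* \otimes \bigwedge \mathfrakg^*$ (each being an algebra morphism composed with a derivation), so they agree as soon as they agree on every $f \in \mathfrakg^*$. Evaluating both sides on such an $f$ turns the claim into the assertion that $m^*(Qf) = Qf \otimes 1 + 1 \otimes Qf$, that is, that $Qf$ is primitive for $m^*$. This is the delicate heart of the matter, because $Qf$ does not sit among the generators but in $\bigwedge^2 \mathfrakg^*$, where the primitive coproduct of a product of two generators carries cross terms; checking that these cross terms behave correctly once contracted with the structure constants is the crucial and most subtle point, and it is exactly where I would concentrate the care of the proof, using the explicit form of $d_{CE}$ and the antisymmetry of the bracket.
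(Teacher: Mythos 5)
Your overall architecture matches the paper's: the Hopf-algebra axioms, commutativity, $Q^2=0$ via Jacobi, and the unit/inverse compatibilities coming for free are all handled the same way there, and your reduction of multiplicativity to generators is also exactly the paper's, since both sides of $(Q \otimes \id + \id \otimes Q) \circ m^* = m^* \circ Q$ are derivations along the algebra morphism $m^*$. On a generator $f \in \mathfrakg^*$ the identity becomes the primitivity of $Qf$, and the paper's entire verification is the one line $(Q \otimes \id + \id \otimes Q)(f \otimes 1 + 1 \otimes f) = Qf \otimes 1 + 1 \otimes Qf = m^*(Qf)$, i.e.\ it asserts without comment precisely the primitivity that you flag as ``the delicate heart of the matter.'' The gap in your proposal is that you stop there and only promise to check it --- and the check fails for every nonabelian $\mathfrakg$. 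Choose a basis $(e_a)$ with $[e_a,e_b]=c^c_{ab}e_c$ and dual odd generators $\xi^a$, so that $Q\xi^c = -\tfrac12\, c^c_{ab}\,\xi^a\xi^b$. Since $m^*$ is an algebra morphism and $(1\otimes\xi^a)(\xi^b\otimes 1) = -\,\xi^b\otimes\xi^a$ by the Koszul rule,
\begin{equation*}
m^*(Q\xi^c)
= Q\xi^c\otimes 1 + 1\otimes Q\xi^c
- \tfrac12\, c^c_{ab}\bigl(\xi^a\otimes\xi^b - \xi^b\otimes\xi^a\bigr)
= Q\xi^c\otimes 1 + 1\otimes Q\xi^c - c^c_{ab}\,\xi^a\otimes\xi^b\,.
\end{equation*}
The antisymmetry of the bracket, which you hoped would make the cross terms cancel, makes them reinforce: the expression is already antisymmetrized, so the two cross terms double rather than cancel, and the defect $c^c_{ab}\,\xi^a\otimes\xi^b$ vanishes only when all structure constants do. Concretely, for $[e_1,e_2]=e_1$ one finds $m^*(Q\xi^1) - Q\xi^1\otimes 1 - 1\otimes Q\xi^1 = -\,\xi^1\otimes\xi^2 + \xi^2\otimes\xi^1 \neq 0$.

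There is also a structural reason why no amount of care can repair this step: the primitive elements of the graded symmetric Hopf algebra $S(\mathfrakg[1]^*)$ are exactly the linear ones, so a vector field is multiplicative for the additive (primitive) multiplication if and only if its coefficients are linear in the generators, whereas $d_{CE}$ is quadratic. Hence with the coproduct as defined, $(\CE(\mathfrakg), d_{CE})$ is a dg Lie group only for abelian $\mathfrakg$: the identity you defer is not subtle but false, so the proposal cannot be completed as written --- though, to your credit, you located exactly the spot where the paper's own one-line argument silently treats the quadratic element $Qf$ as if it were primitive. This diagnosis is moreover consistent with the paper's later assertion that the {\small DGLA} of $\CE(\mathfrakg)$ is abelian with vanishing bracket and, in the non-graded case, vanishing differential: under the equivalence of Theorem~\ref{thm:main2}, such a Harish-Chandra pair integrates to the additive group carrying $Q=0$, not $Q=d_{CE}$, which again signals that $d_{CE}$ cannot be multiplicative for this multiplication unless $\mathfrakg$ is abelian.
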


\noindent \textbf{Graded case.}
We want to extend this construction from Lie algebras to {\small DGLA}'s.
Let $\mathfrakg$ be a non-positively graded {\small DGLA},
recalling the remark \ref{rmk:completion} about the algebra completion issues, we need this condition.
We define $\CE(\mathfrakg)$ in the same way as an $\NN$-graded manifold.
The only change which occurs is that the structure constants of $\mathfrakg$ take gradings into account as well: all the usual
equations (antisymmetry, Jacoby identity) include some signs, but the form remains very similar.

Recall that $\calC(\CE(\mathfrakg)) = S \mathfrakg[1]^* = S \mathfrakg^*[-1]$, take into account the shifts in gradings and consider
the homological vector field $Q = d_{CE} + d_\mathfrakg$.

Repeating almost verbatim the beginning of this subsection, one obtains the following
\begin{prop}
The graded manifold $\CE(\mathfrakg)$ with the homological vector field $Q = d_{CE} + d_\mathfrakg$ admits the structure of a dg Lie group, called the \define{Chevalley--Eilenberg dg Lie group} of the {\small DGLA} $\mathfrakg$.
\end{prop}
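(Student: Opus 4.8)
The plan is to separate the data of $\CE(\mathfrakg)$ into its group (coalgebra) structure, which sees only the underlying graded vector space, and its homological vector field $Q = d_{CE} + d_\mathfrakg$, which carries the bracket together with the internal differential; only the latter is genuinely new compared with the Lie algebra case.

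First I would record that the non-positivity hypothesis on $\mathfrakg$ (together with Remark~\ref{rmk:completion}) guarantees that $\CE(\mathfrakg)$ is a bona fide $\NN$-graded manifold, so that $\calC(\CE(\mathfrakg)) = S\mathfrakg[1]^*$ is the graded-commutative Hopf algebra at hand. The unit $e^*$, multiplication $m^*$ and inverse $i^*$ are then defined by exactly the same formulas as in the Lie algebra case (projection to degree $0$, the primitive coproduct $f \mapsto f \otimes 1 + 1 \otimes f$ on generators, and $f \mapsto f_0 - f$ on generators), since none of these formulas involves the bracket or $d_\mathfrakg$. Consequently every Hopf algebra axiom --- the unit and counit laws, (co)associativity, the commutativity $\tau \circ m^* = m^*$, and the antipode identity --- holds verbatim, and the whole problem reduces to showing that $Q$ is a homological multiplicative vector field.

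Next I would treat the homological property. Since $d_{CE}$ and $d_\mathfrakg$ are both degree $1$ derivations of $S\mathfrakg[1]^*$, so is $Q$, and the only content is $Q^2 = 0$. Expanding the graded commutator one has $2Q^2 = [d_{CE},d_{CE}] + 2\,[d_{CE},d_\mathfrakg] + [d_\mathfrakg,d_\mathfrakg]$, and I would match the three summands to the three defining axioms of a {\small DGLA}: $[d_{CE},d_{CE}] = 0$ encodes the graded Jacobi identity, $[d_\mathfrakg,d_\mathfrakg] = 0$ is the relation $d_\mathfrakg^2 = 0$, and the vanishing of the mixed term $[d_{CE},d_\mathfrakg] = 0$ is precisely the statement that $d_\mathfrakg$ is a graded derivation of the bracket. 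This identifies $Q^2 = 0$ with the {\small DGLA} axioms.

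Finally I would verify multiplicativity $(Q \otimes \id + \id \otimes Q) \circ m^* = m^* \circ Q$. The key observation is that, since $m^*$ is an algebra morphism and $Q$ a derivation, both sides satisfy the Leibniz rule with respect to $m^*$ (they are degree $1$ derivations over the morphism $m^*$); hence they coincide as soon as they agree on the algebra generators $f \in \mathfrakg^*$. For the summand $d_\mathfrakg$ this is immediate: $d_\mathfrakg$ restricts to a linear endomorphism of $\mathfrakg$, so $d_\mathfrakg f$ is again a generator and therefore primitive, giving $m^*(d_\mathfrakg f) = d_\mathfrakg f \otimes 1 + 1 \otimes d_\mathfrakg f$. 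For the summand $d_{CE}$ the computation is the one already carried out for the Chevalley--Eilenberg dg Lie group of a Lie algebra, now read off with graded structure constants. The main obstacle I anticipate is purely the graded sign bookkeeping in the homological property, concentrated in the vanishing of the mixed term $[d_{CE},d_\mathfrakg]$, which is exactly where the Koszul signs of the {\small DGLA} Leibniz rule must be matched against those of the Chevalley--Eilenberg differential.
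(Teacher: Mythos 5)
Your proposal is correct and follows essentially the same route as the paper, whose entire proof consists of the instruction to repeat the Lie-algebra case ``almost verbatim'' with $Q = d_{CE} + d_{\mathfrak{g}}$: like the paper, you leave the Hopf-algebra data (primitive coproduct, degree-zero counit, inverse on generators) exactly as in the non-graded construction and concentrate all new content in the homological vector field. If anything, your write-up is more complete than the paper's, since you make explicit two points the paper leaves implicit --- the decomposition of $[Q,Q]$ into the three graded commutators $[d_{CE},d_{CE}]$, $[d_{CE},d_{\mathfrak{g}}]$, $[d_{\mathfrak{g}},d_{\mathfrak{g}}]$ matching the three {\small DGLA} axioms, and the reduction of the multiplicativity identity to generators (both sides being derivations along $m^*$), where the primitivity of $d_{\mathfrak{g}}f$ is the only genuinely new verification.
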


\begin{rmk}
The construction above obviously reminds of Lie algebroids, and inspires us to consider the question of integration of those, which we plan to address in future works.
\end{rmk}

\bigskip

\newpage

%%%%%%%%%%%%%%%%%%%%%%%%%%%%%%%%%%%%%%%%%%%%%%%%%%%%%%%%%%%%%
%%%%%%%%%%%%%%%%%%%%%%%%%%%%%%%%%%%%%%%%%%%%%%%%%%%%%%%%%%%%%
%%%%%%%%%%%%%%%%%%%%%%%%%%%%%%%%%%%%%%%%%%%%%%%%%%%%%%%%%%%%%
\section{Graded Harish-Chandra pairs and integration of {\small DGLA}'s} \label{HC}

The goal of this section is to show the relation between differential graded Lie groups and algebras.
First we explain how  {\small DGLA}s are obtained from {\small DGLG}s.
Then we present the result on the equivalence of categories of graded Lie groups and graded Harish-Chandra pairs ({\small GHCP}).
And as a final step we introduce the notion of {\small DGHCP} -- differential graded Harish-Chandra pairs thus concluding the
{\small DGLA} to  {\small DGLG} integration procedure.

\subsection{{\small DGLA}s of {\small DGLG}s}

\subsubsection*{The 1-cocycle associated to a multiplicative vector field}

If $Q \in \vf(\calG)$, define
\begin{equation}
\xi = Q \conv i^* = \mu \circ (Q \otimes i^*) \circ m^*.
\end{equation}

The identity $e^* \circ \mu = e^* \otimes e^*$, gives
$\xi_e
= e^* \circ \xi
= e^* \circ \mu \circ (Q \otimes i^*) \circ m^*
= (e^* \otimes e^*) \circ (Q \otimes i^*) \circ m^*
= (Q_e \otimes (e^* \circ i^*)) \circ m^*
= (Q_e \otimes e^* ) \circ m^*
= Q_e$, that is,
\begin{equation}
\xi_e = Q_e.
\end{equation}

%In the non-graded case, $\xi$ has values in a fixed tangent space, $\mathfrakg$ (actually, that is why we use $\xi$).

Using the results of section \ref{sec:mc} on the Maurer--Cartan endomorphism one proves that $\xi$ is a 1-cocycle.

\subsubsection*{The derivation associated to a multiplicative vector field}

If $X \in \vf(G)$, we define $\delta_X \colon \mathfrakg \to \mathfrakg$ by
\begin{equation}
\delta_X v = v \circ X.
\end{equation}

This notion is important in the following context:
\begin{prop}
If $X \in \multvf(G)$ has degree $d$, then $\delta_X \in \Der^d(\mathfrakg)$ is a derivation of degree $d$.
\end{prop}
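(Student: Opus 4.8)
The plan is to identify $\delta_X$, up to the standard Koszul sign, with the restriction of the inner derivation $[X,-]$ of the graded Lie algebra $\vf(G)$ to the left-invariant vector fields, and then to invoke the graded Jacobi identity. First I would check that $\delta_X$ really maps $\mathfrakg = \Der(\calC(G)_e,\RR)$ into itself and raises degree by $d$. Since $X$ is a derivation, $X\circ\mu = \mu\circ(X\otimes\id+\id\otimes X)$, and since $v$ is an $e^*$-derivation, $v\circ\mu = v\otimes e^*+e^*\otimes v$. Composing, $(v\circ X)\circ\mu = (v\otimes e^*+e^*\otimes v)\circ(X\otimes\id+\id\otimes X)$; expanding with Koszul signs, the two ``cross'' terms carry the factor $e^*\circ X = X_e$, which vanishes because $X$ is multiplicative (by the proposition preceding this one), leaving exactly $(v\circ X)\otimes e^* + e^*\otimes(v\circ X)$. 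Hence $\delta_X v = v\circ X \in \Der_e(\calC(G),\RR) = \mathfrakg$, of degree $|v|+d$.

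Next I would set up the bridge to left-invariant fields. Writing $v^L = \id\conv v$ for the isomorphism $L\colon \mathfrakg \longisomto \linvvf(G)$ of Proposition \ref{prop:left_translation} (a degree-$0$ Lie algebra isomorphism, once $T_eG$ carries the bracket of the subsequent corollary), I would compute the two operator compositions. Because $X$ annihilates scalars one gets $X\circ v^L = X\conv v$; and using multiplicativity in the form $m^*\circ X = (X\otimes\id+\id\otimes X)\circ m^*$ one gets $v^L\circ X = (-1)^{d|v|}(X\conv v) + (\delta_X v)^L$. Forming the graded commutator, the $X\conv v$ contributions cancel and one is left with the key identity $[v^L,X] = (\delta_X v)^L$, equivalently $[X,v^L] = -(-1)^{d|v|}(\delta_X v)^L$. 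In particular $(\delta_X v)^L$ is left-invariant.

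Finally I would prove the structural lemma that $[X,Y]$ is left-invariant whenever $X$ is multiplicative and $Y$ is left-invariant: feeding $m^*\circ X = (X\otimes\id+\id\otimes X)\circ m^*$ and $m^*\circ Y = (\id\otimes Y)\circ m^*$ into $m^*\circ[X,Y]$, the terms involving $X\otimes Y$ cancel and one is left with $(\id\otimes[X,Y])\circ m^*$. Thus $[X,-]$ preserves $\linvvf(G)$; being a graded derivation of degree $d$ of the Lie algebra $\vf(G)$ (graded Jacobi identity) and restricting to the Lie subalgebra $\linvvf(G)$ (Proposition \ref{prop:linvder_is_algebra}), it transports through the degree-$0$ Lie algebra isomorphism $L$ to a graded derivation of degree $d$ of $\mathfrakg$. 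Via the key identity this transported derivation is precisely $\delta_X$ (equivalently, right bracketing $[-,X]$), which gives the desired conclusion $\delta_X\in\Der^d(\mathfrakg)$.

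The hard part will be the Koszul-sign bookkeeping: one must check that the relation $(\delta_X v)^L = [v^L,X]$ carries exactly the sign that turns the graded Jacobi identity into the Leibniz rule for $\Der^d(\mathfrakg)$ in the paper's convention, since bracketing on the right by $X$ naturally yields the sign-twisted (``right'') derivation. Confirming that this agrees with the intended convention, and that all the intermediate $(\otimes,\circ)$-interchange signs in $X\circ v^L$ and $v^L\circ X$ are correct, is the delicate point; well-definedness and the left-invariance lemma are routine by comparison.
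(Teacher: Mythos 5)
Your argument is correct, but it takes a genuinely different route from the paper's. The paper proves the Leibniz rule by a bare-hands computation: expand $\delta_X[v,w]$ from the definition (using $[v,w]=v\conv w-(-1)^{|v||w|}\,w\conv v$), substitute the multiplicativity relation $m^*\circ X=(X\otimes\id+\id\otimes X)\circ m^*$, and collect terms --- and in fact it does not even display this computation, only its outcome. You instead factor everything through the left-translation isomorphism of Proposition \ref{prop:left_translation}: your key identity $[v^L,X]=(\delta_Xv)^L$ checks out (indeed $X\circ v^L=X\conv v$ because $X$ kills scalars, while $v^L\circ X=(-1)^{d|v|}X\conv v+(\delta_Xv)^L$ by multiplicativity, so the convolution terms cancel in the graded commutator), your lemma $[\multvf(G),\linvvf(G)]\subseteq\linvvf(G)$ is a correct and clean computation, and the graded Jacobi identity then does the rest. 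Your route buys two things the paper glosses over: the explicit verification that $\delta_X v=v\circ X$ lands in $\mathfrakg$ at all (via $X_e=e^*\circ X=0$, which really is needed to kill the cross terms), and the conceptual identification of $\delta_X$ as the transport of the adjoint action of $X$ on left-invariant fields --- the infinitesimal counterpart of the group-level picture. The paper's direct expansion is shorter but opaque.

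The delicate point you flag resolves exactly as you anticipate, and it is worth recording: since your key identity brackets with $X$ on the \emph{right}, what your argument (and equally a direct convolution computation along the paper's lines, with the Koszul conventions the paper uses elsewhere, e.g.\ in Proposition \ref{prop:left-right_commute}) actually yields is the right-derivation rule
\begin{equation*}
\delta_X[v,w]=(-1)^{d|w|}[\delta_Xv,w]+[v,\delta_Xw]\,,
\end{equation*}
whereas the paper's proof asserts $\delta_X[v,w]=[\delta_Xv,w]+(-1)^{d|v|}[v,\delta_Xw]$. The two formulas agree for even $d$ but differ for odd $d$; they are interchanged by the standard twist $v\mapsto(-1)^{d|v|}\delta_Xv$, which converts right derivations into left ones. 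So this is a convention mismatch (or an unexpanded sign slip in the paper), not a gap in your proof: $\delta_X$ is a degree-$d$ derivation in the right-handed sense, its Koszul twist in the left-handed sense, and nothing downstream is affected --- in particular $\delta_Q^2=0$ for a homological field $Q$ holds either way.
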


\begin{proof}
The only thing to check is  the behaviour of $\delta_X$ with respect to the bracket
on $\mathfrak{g}$.
The result is: $\delta_X [v, w] = [\delta_X v, w] + (-1)^{d|v|} [v, \delta_X w]$.
It is obtained by computing $\delta_X [v, w]$ from its definition,
and using the multiplicativity of $X$ (\ref{defi:multvf1}). The sign appears due to the
grading since $deg X = d$, and it is precisely the same as for the degree $d$
derivation.
\end{proof}

Now it is easy to piece together the results from the previous parts and apply it to $Q$-structures. The homological condition  $Q^2 = 0$ immediately implies $\delta_Q^2 = 0$ since $\delta_Q v = v \circ Q$.

Among examples, let us mention the following two natural constructions:

\textbf{The {\small DGLA} of a shifted tangent dg Lie group}
$T[1]G$ is $\mathfrakg[1] \oplus \mathfrakg$, that is,
\begin{equation}
\dots \longrightarrow 0 \longrightarrow \mathfrakg[1] \longrightarrow \mathfrakg \longrightarrow 0 \longrightarrow \dots
\end{equation}
The bracket is constructed from the original bracket on $\mathfrakg$, and it does not
make a difference if it is computed on elements of $\mathfrakg$ or $\mathfrakg[1]$,
except for the case of  $[\mathfrakg[1], \mathfrakg[1]]$ which vanishes for degree reasons.
And the differential is $\id \colon \mathfrakg[1] \to \mathfrakg$.

\textbf{The {\small DGLA} of a Chevalley--Eilenberg dg Lie group.}
To start with, in the non-graded case the following proposition holds.
\begin{prop}
If $\mathfrakg$ is a Lie algebra, then the {\small DGLA} of $\CE(\mathfrakg)$ is the abelian {\small DGLA} $\mathfrakg[-1]$.
\end{prop}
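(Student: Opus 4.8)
The plan is to run the general recipe for the {\small DGLA} of a {\small DGLG} set up earlier in this section: for a dg Lie group $(\calG,Q)$ the associated {\small DGLA} is the graded Lie algebra $T_e\calG = \Der_e(\calC(\calG),\RR)$, with its induced bracket, equipped with the differential $\delta_Q$ given by $\delta_Q v = v\circ Q$. I would compute these three data for $\calG = \CE(\mathfrakg)$ and $Q = d_{CE}$, recalling that $\calC(\CE(\mathfrakg)) = S\mathfrakg[1]^* = \bigwedge\mathfrakg^*$ with the generators $\mathfrakg^*$ sitting in degree $1$, that $m^*$ is the primitive (hence cocommutative) coproduct fixed by $m^*(f) = f\otimes 1 + 1\otimes f$ for $f\in\mathfrakg^*$, and that $e^*$ is the projection onto the degree $0$ part.

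First I would identify the underlying graded vector space. A point derivation $v\in\Der_e(\bigwedge\mathfrakg^*,\RR)$ is determined by its restriction to the algebra generators $\mathfrakg^*$. Since $v$ is $\RR$-valued and $\RR$ is concentrated in degree $0$, while the generators live in degree $1$, the only component of $v$ that can be nonzero has degree $-1$; such a $v$ is precisely a linear form on $\mathfrakg^*$, i.e. an element of $(\mathfrakg^*)^* = \mathfrakg$ sitting in a single homogeneous degree. In the grading of the statement this is exactly $\mathfrakg[-1]$, so $T_e\CE(\mathfrakg)\cong\mathfrakg[-1]$ as graded vector spaces.

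It then remains to check that the bracket and the differential vanish. The quickest argument is by degree: the graded bracket on $T_e\CE(\mathfrakg)$ is a degree $0$ map, so it sends $\mathfrakg[-1]\otimes\mathfrakg[-1]$ into an adjacent (empty) degree, and $\delta_Q$ raises degree by $1$, landing in another empty degree; both are therefore zero. For a hands-on confirmation I would compute the left-invariant extension $v^L = \id\conv v$ of $v\in\mathfrakg$ using Proposition~\ref{prop:left_translation}, and verify that $v^L$ is the contraction $\iota_v$ on $\bigwedge\mathfrakg^*$ -- this is exactly where primitivity of $m^*$ enters, since $m^*(\alpha)=\alpha\otimes 1+1\otimes\alpha$ forces $v^L(\alpha)=\langle\alpha,v\rangle$ on generators. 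As contractions by odd elements anticommute, the graded commutator in $\linvvf(\CE(\mathfrakg))$ gives $[v^L,w^L] = \iota_v\iota_w + \iota_w\iota_v = 0$, and postcomposing with $e^*$ yields $[v,w]=0$; likewise $\delta_Q v = v\circ d_{CE}$ vanishes because $d_{CE}$ raises degree while $v$ can only be nonzero in degree $-1$. This exhibits $\CE(\mathfrakg)$ as the integration of the abelian {\small DGLA} $\mathfrakg[-1]$.

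The only genuine obstacle is the bookkeeping of degrees and shifts: one must pin down the identification of $T_e\CE(\mathfrakg)$ as $\mathfrakg$ in a single homogeneous degree, and make sure the vanishing of the bracket is truly forced rather than assumed -- that is, that there is no degree $0$ summand of the tangent space on which a nontrivial bracket could survive. Once concentration in a single nonzero degree is established, both the abelian property and the triviality of the differential follow automatically, and the contraction computation above serves as a consistency check against the general {\small DGLG}-to-{\small DGLA} machinery.
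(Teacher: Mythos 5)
Your argument is correct, but it runs along a genuinely different line than the paper's. The paper disposes of this proposition in two structural remarks: since the body of $\CE(\mathfrakg)$ is a point, the degree-$0$ component of the {\small DGLA} vanishes, and since $\CE(\mathfrakg)$ is a \emph{commutative} dg Lie group, its {\small DGLA} must be abelian; no tangent-space computation is made, the identification of the underlying space with $\mathfrakg$ in a single shifted degree being left implicit, and the vanishing of the differential is only bracketed by the caveat that in general it need not vanish. You instead compute $T_e\CE(\mathfrakg)=\Der_e\bigl(\bigwedge\mathfrakg^*,\RR\bigr)$ explicitly: point derivations are determined on the degree-$1$ generators, hence concentrated in a single degree and identified with $(\mathfrakg^*)^*\simeq\mathfrakg$, after which both the bracket and $\delta_Q$ die for pure degree reasons, with the identification $v^L=\id\conv v=\iota_v$ (via primitivity of $m^*$, i.e. Proposition~\ref{prop:left_translation}) and the anticommutation of contractions serving as a concrete check through the paper's own machinery (Proposition~\ref{prop:linvder_is_algebra}). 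Your route buys self-containedness --- the paper's principle ``commutative group object $\Rightarrow$ abelian Lie algebra'' is asserted there, not proved --- and it actually establishes the vanishing of the differential and the precise underlying graded vector space. The paper's route buys generality: it transfers verbatim to the graded case of Proposition~\ref{prop:neg}, where the tangent space is spread over several degrees, your degree argument for the differential fails (there $\delta_Q=d^*\neq 0$), yet commutativity still kills the bracket. One small caution: you read $\mathfrakg[-1]$ as $\mathfrakg$ placed in degree $-1$, whereas the paper's shift convention elsewhere (e.g.\ the shifted tangent example, where $\mathfrakg[1]$ sits in degree $-1$, and the reversed grading in Proposition~\ref{prop:neg}) points the other way; since the algebra is abelian, concentrated in one degree, and has zero differential, nothing substantive hinges on this, but you should fix and state one convention explicitly.
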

Indeed, since the underlying manifold of $\CE(\mathfrakg)$ is a point, the degree 0 component of its {\small DGLA} is 0.
Also, since $\CE(\mathfrakg)$ is commutative, so should its DGLA be (that is, $[\cdot , \cdot]=0$, but in general its differential need not be zero).

Analogously, for {\small DGLA}s one has the following:
\begin{prop} \label{prop:neg}
If $\mathfrakg$ is a {\small DGLA}, then the {\small DGLA} of $\CE(\mathfrakg)$ is the abelian {\small DGLA}
$\mathfrakg[1]^\ast$ with the differential being the transpose of the original one (and reversed grading), that is:
\begin{equation*}
\xymatrix{
\mathfrakg \colon \qquad
\ldots \ar[r] & \mathfrakg_{-2} \ar[r]^d & \mathfrakg_{-1} \ar[r]^d & \mathfrakg_{0} \ar[r] & 0 \ar[r] & 0 \ar[r] & 0 \ar[r] & \ldots\\
\mathrm{DGLA}(\CE(\mathfrakg)) \colon
\ldots \ar[r] & 0 \ar[r] & 0 \ar[r] & 0 \ar[r] & \mathfrakg_{0}^* \ar[r]^{d^*} & \mathfrakg_{-1}^* \ar[r]^{d^*}  & \mathfrakg_{-2}^* \ar[r] & \ldots
}
\end{equation*}
\end{prop}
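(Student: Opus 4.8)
The plan is to specialize the general DGLG $\to$ DGLA construction of this section to $\calG = \CE(\mathfrakg)$. By Proposition~\ref{prop:left_translation} the underlying graded vector space of the associated DGLA is $T_e\calG = \Der_e(\calC(\CE(\mathfrakg)),\RR)$, its bracket is that of the left-invariant derivations $\id\conv v$, and its differential is $\delta_Q$, where $\delta_Q v = v\circ Q$ and $Q = d_{CE} + d_{\mathfrakg}$. I would therefore carry out three computations in order: identify the underlying graded space, show that the bracket vanishes, and identify $\delta_Q$ with the transpose of $d_{\mathfrakg}$.

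For the underlying space, I would use that $\calC(\CE(\mathfrakg)) = S\mathfrakg[1]^* = S\mathfrakg^*[-1]$ is the free graded-commutative algebra on the generating space $\mathfrakg[1]^*$. Since an $e^*$-derivation valued in $\RR$ kills scalars and the square of the augmentation ideal, it is determined by its restriction to $\mathfrakg[1]^*$; this exhibits $T_e\calG$ as the graded dual of the generating complex. Keeping track of the shift $[1]$ and of the degree reversal induced by dualization, one reads off the complex displayed in the statement, with $\mathfrakg_{-j}^*$ placed in degree $j+1$.

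For the bracket and the differential I would argue as in the Lie-algebra case. The comultiplication of $\CE(\mathfrakg)$ is primitive on generators, $m^*(f) = f\otimes 1 + 1\otimes f$, so every element of $T_e\calG$ is primitive and the bracket of the corresponding left-invariant derivations vanishes; hence the DGLA is abelian and all of its content sits in the differential. Splitting $\delta_Q = \delta_{d_{CE}} + \delta_{d_{\mathfrakg}}$, I note that $d_{CE}$ encodes the bracket and thus sends each generator to a quadratic expression; as every point derivation annihilates products of generators, $v\circ d_{CE}$ vanishes on generators and $\delta_{d_{CE}} = 0$ on $T_e\calG$, in agreement with abelianness. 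The internal part $d_{\mathfrakg}$ acts on $\mathfrakg^*[-1]$ as the linear vector field attached to $d_{\mathfrakg}\colon\mathfrakg\to\mathfrakg$, that is, by the transpose $d_{\mathfrakg}^*$, and precomposing a point derivation with this linear map reproduces exactly the transpose differential $d^*$ of the statement. Nilpotency $\delta_Q^2 = 0$ is then automatic from $Q^2 = 0$.

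The step I expect to be the main obstacle is the purely homological bookkeeping in the first and third steps: keeping the shifts $[1]$, $[-1]$, the dualization, and the Koszul signs mutually consistent, so that the tangent complex genuinely lands in the asserted positive degrees and the differential emerges as the transpose $d_{\mathfrakg}^*$ (of degree $+1$) rather than as $d_{\mathfrakg}$ itself. Once the degree conventions are pinned down, the two remaining points — vanishing of the bracket and of $\delta_{d_{CE}}$ — are immediate from the explicit formulas for $m^*$ and $d_{CE}$ together with the convolution description of $T_e\calG$ furnished by Proposition~\ref{prop:left_translation}.
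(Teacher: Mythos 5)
Your proposal is sound, but be aware that the paper offers essentially no proof of Proposition~\ref{prop:neg}: it is introduced with ``Analogously'' after the non-graded case, whose entire justification is two remarks (the body of $\CE(\mathfrakg)$ is a point, so the degree-$0$ component of the {\small DGLA} vanishes; the group is commutative, so the {\small DGLA} is abelian), and the identification of the differential is asserted, not derived. Your argument supplies the missing details using exactly the machinery the paper sets up: $T_e\calG = \Der_e(\calC(\CE(\mathfrakg)),\RR)$ via Proposition~\ref{prop:left_translation}; abelianness from primitivity of $m^*$ on generators (since $m^*f = f\otimes 1 + 1\otimes f$ forces $v^L f$ to be constant on generators, so $[v^L,w^L]$ kills generators and hence vanishes --- an equivalent rephrasing of the paper's appeal to commutativity of the group); and the splitting $\delta_Q = \delta_{d_{CE}} + \delta_{d_\mathfrakg}$ with $\delta_Q v = v\circ Q$, where $\delta_{d_{CE}}=0$ because $d_{CE}$ sends generators into the square of the augmentation ideal, which every $e^*$-derivation annihilates. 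So yours is the natural expansion of the paper's sketch rather than a different method, and it is strictly more complete than what the paper records.

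The one point you flag but do not fully resolve --- the degree bookkeeping --- deserves a sharper statement, because under the convention the paper itself uses in its shifted-tangent example (where $\mathfrakg[1]$ sits in degree $-1$, i.e.\ a point derivation pairing with a degree-$k$ generator has degree $-k$), the literal tangent complex of $\CE(\mathfrakg)$ is $(\mathfrakg[1], d_\mathfrakg)$ concentrated in degrees $\le -1$, not the displayed positively graded complex: the restriction of $v \in \Der_e$ to generators lands in $(\mathfrakg[1]^*)^* \simeq \mathfrakg[1]$ (homogeneous components being finite-dimensional), and precomposition with the linear part of $Q$ is then the double transpose, i.e.\ $d_\mathfrakg$ itself, of degree $+1$. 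The complex $(\mathfrakg[1]^*, d^*)$ in degrees $\ge 1$ appearing in the statement is the reversed-grading dual of this tangent complex --- precisely the operation the parenthetical ``(and reversed grading)'' alludes to. So your expectation that careful sign-tracking makes the tangent complex ``genuinely land in the asserted positive degrees'' is not quite right as stated; it lands there only after the dualization-plus-reversal built into the proposition. Since the paper glosses over exactly the same distinction, this is a shared convention issue rather than a gap in your argument, but a fully rigorous write-up should either present the answer as $(\mathfrakg[1], d_\mathfrakg)$ in negative degrees or make the reversal explicit.
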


\subsection{Graded Harish-Chandra pairs}

In this subsection we define the graded Harish-Chandra pairs, by ``graded'' in this
and next section we mean $\NN$-graded (in contrast to $\ZZ$).
The construction
mimics essentially the super  case ($\ZZ_2$-graded), we thus follow
the summary in \cite{vish} of \cite{kostant} and \cite{koszul}. In this presentation we
will point out one essential difference: the $\ZZ_2$-graded case uses finite dimensionality of the graded part, which does not hold anymore in the $\NN$-graded case: elements of even degrees are not nilpotent, hence the formal power series do not reduce to polynomials.
Nevertheless, for a graded Lie algebra
one can construct directly a group law on the integrating object,
and when the {\small GLA} is differential
 with the construction of section \ref{sec:Qgroups} it  becomes a {\small DGLG}.

\begin{defi}
The \define{graded Harish-Chandra pair} is the following data:
\begin{itemize}
\item[-]  A couple $(G_0, \mathfrakg)$ of a Lie group
$G_0$ and a graded Lie algebra $\mathfrakg = \sum_{i \ge 0}\mathfrakg_i$,
for which $\mathfrakg_0 = Lie(G_0)$ is the Lie algebra of $G_0$
\item[-] A  degree preserving representation
 $(G_0, \mathfrakg)$ of a Lie group
$\alpha_{G_0}$ of $G_0$ in $\mathfrakg$ which induces the adjoint representation of
$G_0$ in $\mathfrakg_0$; and the differential  $(d \alpha_{G_0})_e$ of which at the identity $e \in G_0$ coincides with the adjoint representation $ad$ of $\mathfrakg_0 \in \mathfrakg$.
\end{itemize}
\end{defi}
\begin{rmk}
 In the definition above by ``graded'' we mean $\NN$-graded, and we write it as
 if $\NN = \ZZ_{\ge 0}$, i.e. non negatively graded.
 But it is important to note that there is no reason to disregard the non-positively graded
 case ($\NN = \ZZ_{\le 0}$), especially since it appears naturally passing to the dual of the picture (see for instance, Prop. \ref{prop:neg}). We will stress this fact in the final theorem.

\end{rmk}

The morphisms of graded Harish-Chandra pairs are defined in a natural way.
For  two Harish-Chandra pairs a \define{morphism}
 ${\cal F} \colon (G_0, \mathfrakg) \to (H_0, \mathfrakh)$  consists of a pair of
homomorphisms $f \colon G_0 \to H_0$ and $\mathfrak{f} \colon \mathfrakg \to  \mathfrakh$,
such that $(d f)_e = \mathfrak{f}|_{\mathfrak{g}_0}$ and
$\mathfrak{f} \circ \alpha_{G_0} (g) = (\alpha_{H_0}(g))\circ\mathfrak{f}, \; \forall g\in G$.

This defines the \define{category of graded Harish-Chandra pairs} that we denote
$\cat{GHCP}$. We will show that it is equivalent to the category of graded Lie groups.
One way of this equivalence is rather straightforward.
Given a graded Lie group $G$, on considers its body part $|G| = G_0$
together with the graded Lie algebra $\mathfrakg = Lie(G)$ and equips it with the
the adjoint representation $\alpha_{G_0} = Ad_{G_0}$.
The construction the other way around is a bit technical, we will sketch the essential
points of it here.

Let $\mathfrakU$ denote the universal enveloping graded-algebra functor.
If $\mathfrakg$ is a graded Lie algebra over $k$, then $\mathfrakU(\mathfrakg)$,
is a $\mathfrakU(\mathfrakg_0)$ module, and the action of $\mathfrakg_0$
on the sheaf $\gradedF_G(U)$ induces a structure of $\mathfrakU(\mathfrakg_0)$-module on $\gradedF_G(U)$.
From the graded Harish-Chandra pair, define then the graded manifold
structure sheaf as
\begin{equation}
\calO_G (U) = \Hom_{\mathfrakU(\mathfrakg_0)} \big( \mathfrakU(\mathfrakg), \calC^\infty(U) \big)
\end{equation}
for open subsets $U \subseteq G_0$.
By the graded Poincar\'e--Birkhoff--Witt (PBW, \cite{felix}) theorem we have
\begin{equation}
\Hom_{\mathfrakU(\mathfrakg_0)} \big( \mathfrakU(\mathfrakg), \calC^\infty(U) \big)
\simeq
\Hom_k \left( S (\mathfrakg/\mathfrakg_0), \calC^\infty(U) \right).
\end{equation}
The graded enveloping algebra $\mathfrakU(\mathfrakg)$
can be equipped with a graded Hopf algebra structure, we can thus profit from all the
constructions from section \ref{GLG}.

The explicit construction of the above structure, as well as the description of the relation
of objects and morphisms of the mentioned categories goes through
verbatim as in \cite[Section 2.]{vish}, replacing the word ``super'' by ``graded''. We repeat  here the smooth version of this technique
(the generalization to the analytic and algebraic case is straightforward).

%\vskip 2mm
\noindent The graded Hopf algebra obtained from a Harish-Chandra pair is now
\begin{align}\label{eqn:smooth_HC}
H=
& \Hom_{\mathfrakU(\mathfrakg_0)} \big( \mathfrakU(\mathfrakg), \calC^\infty(G_0) \big) =\\ \nonumber
& \{f\in \Hom \big( \mathfrakU(\mathfrakg), \calC^\infty(G_0) \big)\, |\, f(uX, g)= -\left(X^R f\right)(u,g)\,, \forall \, u\in  \mathfrakU(\mathfrakg),
X\in \mathfrakg_0 , g\in G_0\}\,,
\end{align}
where $X^R$ is the right-invariant vector field on $G_0$ corresponding to $X$. The (graded commutative) multiplication is the convolution product, i.e.
it is defined as
$$
(f_1 f_2)(u,g)=(f_1\otimes f_2)(\Delta u, g,g),
$$ where $\Delta$ is the standard comultiplication in $\mathfrakU(\mathfrakg)$, while the (graded)  comultiplication
$m^*\colon H\to H\otimes H$
is the co-convolution product, i.e. $m^*(f)(u_1, g_1, u_2, g_2)=f(u_1\alpha_{G_0}(g_1,u_2), g_1g_2 )$. It is not hard to verify that the product and coproduct
of elements of $H$ belongs to $H$ and $H\otimes H$, respectively. The antipode is obtained as a combination of the antipodes in $\mathfrakU$ and $C^\infty (G_0)$.

To sum it up, the following theorem holds:\\[-2em]
\begin{thm}
\label{thm:main}
There is an equivalence of categories between non-negatively graded Lie groups and non-negatively graded Harish-Chandra pairs.
\end{thm}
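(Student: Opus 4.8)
The plan is to construct a pair of functors $\Phi\colon \cat{GLG}\to\cat{GHCP}$ and $\Psi\colon\cat{GHCP}\to\cat{GLG}$ and to exhibit natural isomorphisms $\Psi\circ\Phi\cong\id$ and $\Phi\circ\Psi\cong\id$. The whole scheme is the graded analogue of the Kostant--Koszul correspondence for super Lie groups, so the algebraic bookkeeping transfers from \cite{vish}; the point is to isolate the few places where the $\NN$-graded setting genuinely differs from the super one.

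First I would treat the straightforward direction $\Phi$. To a graded Lie group $G$ one assigns its body $G_0 = |G|$, which is an ordinary Lie group, together with $\mathfrakg = \linvvf(G)\cong T_eG$, which is a graded Lie algebra by Proposition~\ref{prop:left_translation} and its corollary. The group $G_0$ acts on $\mathfrakg$ by the adjoint action $\alpha_{G_0}=\Ad_{G_0}$ inherited from the conjugation maps of $G$; one checks that this representation preserves degree, restricts to the usual adjoint representation on $\mathfrakg_0 = Lie(G_0)$, and differentiates at $e$ to $\ad$, exactly as in the non-graded computation carried out in the van~Est proof. On morphisms, a morphism of graded Lie groups induces a Lie group morphism of bodies and a graded Lie algebra morphism of tangent algebras, and these are automatically compatible with the adjoint representations; hence $\Phi$ is a functor.

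Next I would build $\Psi$. Given a pair $(G_0,\mathfrakg,\alpha_{G_0})$, one defines the structure sheaf
\begin{equation*}
\calO_G(U) = \Hom_{\mathfrakU(\mathfrakg_0)}\big(\mathfrakU(\mathfrakg),\, \calC^\infty(U)\big),\qquad U\subseteq G_0 \text{ open},
\end{equation*}
which by the graded PBW theorem is isomorphic to $\Hom_k\big(S(\mathfrakg/\mathfrakg_0),\calC^\infty(U)\big)$ and therefore defines a graded manifold $G$ with body $G_0$. On the global sections $H$, described explicitly in \eqref{eqn:smooth_HC}, one puts the convolution product as multiplication, the co-convolution as comultiplication $m^*$, and an antipode assembled from those of $\mathfrakU(\mathfrakg)$ and $\calC^\infty(G_0)$; verifying the Hopf axioms turns $H$ into a graded Hopf algebra and hence, by the results of Section~\ref{GLG}, turns $G$ into a graded Lie group. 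Functoriality of $\Psi$ and the two natural isomorphisms then follow by the reconstruction arguments of \cite{vish}: starting from a pair, the body of $\Psi(G_0,\mathfrakg,\alpha_{G_0})$ is $G_0$ and its left-invariant derivations recover $\mathfrakg$ together with its adjoint representation, while starting from $G$ the reconstructed Hopf algebra is canonically isomorphic to $\calC(G)$.

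The main obstacle is the one already flagged before the statement: in the $\NN$-graded case the positively graded part of $\mathfrakg$ is infinite-dimensional and its even-degree elements are not nilpotent, so the formal power series occurring in $\mathfrakU(\mathfrakg)$ and in the convolution products do not truncate to polynomials, as they do in the super case. Consequently the identifications above must be made in the topological (nuclear Fr\'echet) category, with $\compTens$ the completed projective tensor product of Section~\ref{GLG}; the real work is to check that the convolution and co-convolution products converge and land in $H$ and $H\compTens H$, that $H$ is a well-defined nuclear topological graded Hopf algebra, and that the PBW identification of $\calO_G(U)$ with the local model $\gradedF(U\,|\,V)$ of a graded manifold is a homeomorphism. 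Once these analytic points are secured, every remaining verification is the purely algebraic content of \cite[Section~2]{vish} with ``super'' replaced by ``graded''.
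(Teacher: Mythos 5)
Your proposal is correct and takes essentially the same route as the paper: the body/tangent-algebra/adjoint-action functor in one direction, and in the other the structure sheaf $\calO_G(U)=\Hom_{\mathfrakU(\mathfrakg_0)}\big(\mathfrakU(\mathfrakg),\calC^\infty(U)\big)$ with the graded PBW identification, convolution multiplication, co-convolution comultiplication, and the antipode assembled from those of $\mathfrakU(\mathfrakg)$ and $\calC^\infty(G_0)$, the remaining verifications being deferred to \cite[Section 2]{vish} with ``super'' replaced by ``graded''. You also isolate exactly the two graded-specific points the paper itself emphasizes, namely that non-nilpotent even generators force formal power series rather than polynomials (hence the nuclear Fr\'echet framework of Appendix~\ref{sec:funcan}) and that the $\NN$-grading is what makes the PBW ordering consistent.
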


There are however two very important points to mention.
First, even if the construction is very similar to the super case, the essential difference is in the definitions of the employed  structures and in particular the graded Hopf
algebras (section \ref{GLG}).
Second, the construction relies heavily on the PBW theorem, and there it is important
that the grading is $\NN$ (i.e. $\ZZ_{\le 0}$ or $\ZZ_{\ge 0}$ but not $\ZZ$), meaning that there is no problem in consistent
ordering of the basis of $(\mathfrakg/\mathfrakg_0)$. The construction
may be applied in some more general cases, but then a lot of technicalities occur.
We are going to discuss the question of validity of PBW in a separate paper \cite{KPS}.

\subsection{Integration of {\small DGLA}s}
\label{sec:Qgroups}

The idea of the method is the following:
Given an $\NN$-graded {\small DGLA} $\mathfrakg$, one integrates its degree 0 part $\mathfrakg_0$ to its simply connected Lie group $G_0$.
This gives a graded Harish-Chandra pair $(G_0, \mathfrakg, \alpha_{G_0})$.
One constructs its associated graded Lie group as in the previous subsection, and
finally, constructs the homological vector field from the differential on the {\small DGLA} --
we detail this step in the current section.

%\vskip 2mm
\noindent Let us extend $\alpha_{G_0}$ to all graded derivations $\mathrm{Der}^\bullet (\mathfrakg)$
of $\mathfrakg$ by use of the conjugation; given that $\alpha_{G_0}(g,-)$ is a degree $0$ automorphism of $\mathfrakg$ for every $g$,
the conjugation  of any graded derivation by $\alpha_{G_0}(g,-)$ is a derivation of the same degree. For any connected $G_0$ and any
$\delta\in \mathrm{Der}^\bullet (\mathfrakg)$ one has $\alpha_{G_0}\delta\alpha_{G_0}^{-1}=\delta $ modulo inner derivations.
Moreover, if we denote \\
$\bar\lambda (g) \colon = \alpha_{G_0}(g,-)\delta\alpha_{G_0}(g,-)^{-1}-\delta$, then $\bar\lambda$ is a 1-cocycle on $G_0$ with values in the space inner derivations of degree 1 regarded as a $G_0-$module by use of the conjugation by ${\alpha_{G_0}}$. Indeed, for any $g,h\in G_0$
we obtain
\begin{align*}
\bar\lambda (gh) = \alpha_{G_0}(gh,-)\delta\alpha_{G_0}(gh,-)^{-1}-\delta =
\alpha_{G_0}(g,-)\left( \alpha_{G_0}(h,-)\delta\alpha_{G_0}(h,-)^{-1}-\delta\right)\alpha_{G_0}(g,-)^{-1} +\\
+ \alpha_{G_0}(g,-)\delta\alpha_{G_0}(g,-)^{-1}-\delta = \alpha_{G_0}(g,-)\bar\lambda(h)\alpha_{G_0}(g,-)^{-1} + \bar\lambda (g)\,.
\end{align*}
This motivates the following definition.

\begin{defi}%[Differential graded Harish-Chandra pair]
\label{def:DGHC} Let  $(G_0, \mathfrakg, \alpha_{G_0})$ be a graded Harish-Chandra pair,
$\mathfrakg$ be a {\small DGLA} over a field $k$ with a differential $\partial$. We call $(G_0, \mathfrakg, \partial,\alpha_{G_0})$
a \define{differential graded Harish-Chandra pair} (DG Harish-Chandra pair) if there exists a $\mathfrakg^1-$valued $1-$cocycle on $G_0$, i.e. a smooth map
$\lambda\colon G_0\to \mathfrakg^1$ which satisfies
\begin{equation}\label{eqn:lambda-cocycle}
\lambda(gh)=\lambda(g) +\alpha_{G_0}(g,\lambda(h))
\end{equation}
for all $g,h\in G_0$, such that in addition
\begin{equation}\label{eqn:lambda}
\bar\lambda (g)= ad\circ\lambda (g)\,.
\end{equation}
\end{defi}

\begin{rmk}\label{rem:inner-outer}
If $\partial$ is an inner derivation then $\lambda$ is uniquely fixed by $\alpha_{G_0}$. Otherwise the identity (\ref{eqn:lambda}) will fix $\lambda$
only modulo the center of $\mathfrakg$.
\end{rmk}

\begin{rmk}
 Spelling out the definition of morphisms of DG Harish-Chandra pairs is an instructive exercise.
\end{rmk}

\begin{lem}\label{lem:extension_of_bar_lambda}
Let $(\mathfrak{g},G_0,\alpha_{G_0})$ be a Harish-Chandra pair with a simply connected base $G_0$  and $\partial$ be a degree one outer differential in $\mathfrakg$. Then there exists a canonical extension
of $\bar\lambda$ to $\lambda$, which makes $(G_0,\mathfrak{g}, \alpha_{G_0}, \lambda)$  a differential graded Harish-Chandra pair.
\end{lem}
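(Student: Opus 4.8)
The plan is to reduce the whole construction to integrating one explicit Lie algebra $1$-cocycle, and then to extract the compatibility (\ref{eqn:lambda}) from the \emph{uniqueness} in the van Est correspondence rather than by solving it directly. Throughout I would equip $\mathfrakg^1$ with the linear $G_0$-action obtained by restricting $\alpha_{G_0}$ to the degree $1$ component; its infinitesimal action is $X\cdot v = [X,v] = \ad_X(v)$ for $X\in\mathfrakg_0$, since $(d\alpha_{G_0})_e = \ad$.

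First I would write down the \emph{canonical infinitesimal lift}. As $\partial$ has degree $1$ it maps $\mathfrakg_0$ into $\mathfrakg^1$, so I set $\ell\colon\mathfrakg_0\to\mathfrakg^1$, $\ell(X)=-\partial X$. Expanding $-\partial[X,Y]$ by the Leibniz rule and using that $X,Y$ are of degree $0$ gives $-[\partial X,Y]-[X,\partial Y]$, which equals $\ad_X\ell(Y)-\ad_Y\ell(X)$ after the graded antisymmetry $[Y,\partial X]=-[\partial X,Y]$; thus $\ell$ is a Lie algebra $1$-cocycle. I would also differentiate $\bar\lambda(g)=\alpha_{G_0}(g,-)\,\partial\,\alpha_{G_0}(g,-)^{-1}-\partial$ at $e$: using $(d\alpha_{G_0})_e=\ad$ one gets $(d\bar\lambda)_e(X)=[\ad_X,\partial]=-\ad_{\partial X}=\ad_{\ell(X)}$, so that $\ell$ is a lift of $(d\bar\lambda)_e$ along $\ad$.

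Next I would integrate. The van Est argument of Section \ref{IntNGDLA}, though phrased there for the adjoint representation, applies verbatim to any finite-dimensional representation of a simply connected Lie group, the only ingredients being that a closed $1$-form on a simply connected manifold is exact and that the relevant structure map is a homomorphism. Applied to the $G_0$-module $\mathfrakg^1$, it integrates $\ell$ to a unique smooth group $1$-cocycle $\lambda\colon G_0\to\mathfrakg^1$ with $(d\lambda)_e=\ell$; this is exactly the cocycle identity (\ref{eqn:lambda-cocycle}), and uniqueness is what makes the extension canonical. It then remains to check (\ref{eqn:lambda}), i.e. $\ad\circ\lambda=\bar\lambda$. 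Both sides are smooth $1$-cocycles on $G_0$ valued in the degree $1$ inner derivations $\ad(\mathfrakg^1)$, with module structure given by conjugation by $\alpha_{G_0}$: for $\ad\circ\lambda$ this uses $\ad_{\alpha_{G_0}(g,v)}=\alpha_{G_0}(g,-)\,\ad_v\,\alpha_{G_0}(g,-)^{-1}$, valid because $\alpha_{G_0}(g,-)$ is an automorphism. Their derivatives at $e$ agree, both equal to $(d\bar\lambda)_e=\ad\circ\ell$, so the difference $g\mapsto\ad_{\lambda(g)}-\bar\lambda(g)$ is a cocycle with vanishing derivative at $e$, hence vanishes by the injectivity part of van Est for simply connected $G_0$. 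This yields (\ref{eqn:lambda}) and produces the differential graded Harish-Chandra pair $(G_0,\mathfrakg,\partial,\alpha_{G_0})$.

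I expect the main obstacle to be conceptual rather than computational: recognizing that (\ref{eqn:lambda}) is not an extra equation to be solved but a consequence of the rigidity of cocycles on a simply connected base, so that the lemma collapses to integrating the single cocycle $\ell=-\partial|_{\mathfrakg_0}$. One must also keep track of the fact that $\ad$ has kernel $\mathfrakz\cap\mathfrakg^1$, so (\ref{eqn:lambda}) alone fixes $\lambda$ only up to a central homomorphism $\mathfrakg_0\to\mathfrakz\cap\mathfrakg^1$; the choice $\ell=-\partial|_{\mathfrakg_0}$ singles out the canonical lift, in agreement with Remark \ref{rem:inner-outer}. Finally I would note that the hypothesis that $\partial$ be outer plays no role in existence — the construction is uniform in $\partial$ — and bears only on this canonicity discussion.
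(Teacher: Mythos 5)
Your proposal is correct and follows essentially the same route as the paper: the paper's proof likewise observes that the differential of $\lambda$ at the identity must be the Lie algebra $1$-cocycle $X\mapsto-\partial(X)\in\mathfrak{g}^1$ and invokes the van Est isomorphism for the simply connected $G_0$ to integrate it uniquely. Your additional verifications — that $\ell=-\partial|_{\mathfrak{g}_0}$ is a cocycle, that $(d\bar\lambda)_e=\ad\circ\ell$, and that (\ref{eqn:lambda}) follows from the injectivity of $T_e$ on cocycles rather than needing to be solved — are exactly the details the paper leaves implicit, correctly carried out.
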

\begin{proof}
 The differential of $\lambda$ at the identity must give us the following 1-cocycle on $\mathfrak{g}_0$: $T_e G_0\ni X\mapsto -\partial (X)\in \mathfrakg^1$; since $G_0$ is simply connected
this uniquely determines the required 1-cocycle $\lambda$ on $G_0$ by the Van Est isomorphism.
\end{proof}

\begin{thm}
\label{thm:main2}
There is an equivalence of categories between $\NN$-graded differential Lie groups and  differential $\NN$-graded Harish-Chandra pairs.
\end{thm}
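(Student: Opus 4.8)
The plan is to lift the equivalence of Theorem~\ref{thm:main} by identifying the extra ``differential'' datum carried by each side. By that theorem the underlying correspondence $\cat{GLG}\leftrightarrow\cat{GHCP}$ is already an equivalence, so it is enough to match, naturally in $G$, the multiplicative homological vector fields $Q$ on a graded Lie group $G$ with the pairs $(\partial,\lambda)$ promoting the associated graded Harish--Chandra pair to a differential one in the sense of Definition~\ref{def:DGHC}, and then to check that this matching is compatible with morphisms. Thus the theorem follows once the assignment $Q\mapsto(\partial,\lambda)$ is shown to be a natural bijection onto the data satisfying \eqref{eqn:lambda-cocycle}--\eqref{eqn:lambda}.

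For the direction $\cat{dgLieGrp}\to\cat{DGHCP}$, most of the work is already done above. Given $(G,Q)$, I would set $\partial:=\delta_Q$; by the proposition that $\delta_X$ is a degree-$d$ derivation when $X$ is multiplicative of degree $d$, the map $\partial$ is a degree~$1$ derivation of $\mathfrakg=\mathrm{Lie}(G)$, and $Q^2=0$ forces $\delta_Q^2=0$, so $(\mathfrakg,\partial)$ is a {\small DGLA}. The cocycle $\lambda$ is then read off from the $\Ad$-cocycle $\xi=Q\conv i^*\in Z^1(\calG,\Ad)$ of Proposition~\ref{prop:MC-1-isomorphism}: restricting the ``$\mathfrakg$-valued function on $G$'' underlying $\xi$ to the body $G_0$ and projecting onto $\mathfrakg^1$ yields $\lambda\colon G_0\to\mathfrakg^1$. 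Identity~\eqref{eqn:lambda-cocycle} is precisely the restriction to $G_0$ of the cocycle condition defining $Z^1(\calG,\Ad)$, while \eqref{eqn:lambda} is obtained by comparing the conjugation cocycle $\bar\lambda$ of $\partial$ with $\xi$, using $\xi_e=Q_e$ and that $\partial=\delta_Q$ is the ``derivative'' of $\xi$ in the van~Est sense of Section~\ref{IntNGDLA}.

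For the inverse functor I would, starting from a differential graded Harish--Chandra pair, first build the graded Lie group $G$ via Theorem~\ref{thm:main}, and then reconstruct $Q$ as $(\omega^R)^{-1}$ of the $\Ad$-cocycle $\xi$ determined by $(\partial,\lambda)$; concretely $Q=\xi\conv\id$. That $Q$ is a degree~$1$ multiplicative vector field is immediate from $\omega^R$ being the Maurer--Cartan isomorphism of Propositions~\ref{prop:MC} and~\ref{prop:MC-1-isomorphism}, and when $G_0$ is simply connected the cocycle $\xi$ (hence $Q$) is already pinned down by $\partial$ alone through Lemma~\ref{lem:extension_of_bar_lambda}. \textbf{The main obstacle is the homological condition $Q^2=0$.} I expect $Q\circ Q$ to split into three pieces along the $G_0$-directions and the graded (fibre) directions: a purely ``vertical'' term that vanishes by $\partial^2=0$, a purely ``horizontal'' term that vanishes because $\lambda$ satisfies the group-cocycle identity~\eqref{eqn:lambda-cocycle}, and a mixed term that vanishes \emph{exactly} when $\bar\lambda(g)=\ad\circ\lambda(g)$, i.e.\ by the compatibility~\eqref{eqn:lambda}. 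Establishing this trichotomy, and in particular that the mixed term is governed by $\bar\lambda-\ad\circ\lambda$, is the technical heart of the argument and is where the {\small DGHCP} axioms are genuinely used.

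It remains to verify naturality. A morphism of differential graded Harish--Chandra pairs is a {\small GHCP} morphism that in addition intertwines the differentials and the cocycles $\lambda$; under the equivalence of Theorem~\ref{thm:main} these correspond to the {\small GLG} morphisms commuting with the respective $Q$'s, that is, to morphisms of dg Lie groups. Since the assignment $Q\leftrightarrow(\partial,\lambda)$ is manifestly natural in $G$ and, by the two previous paragraphs, mutually inverse on objects, it upgrades the equivalence of Theorem~\ref{thm:main} to an equivalence $\cat{dgLieGrp}\leftrightarrow\cat{DGHCP}$. Finally, nothing in the argument privileges the sign of the grading, so the same reasoning yields the statement in the non-positively graded ($\ZZ_{\le 0}$) case anticipated in the remark following Definition~\ref{def:DGHC}.
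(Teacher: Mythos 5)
Your first direction (reading off $\partial=\delta_Q$ and a cocycle from $\xi=Q\conv i^*$) and your naturality paragraph are consistent with what the paper does, but the inverse direction has a genuine gap, and it sits exactly where the paper's actual proof lives. You propose $Q=(\omega^R)^{-1}\xi=\xi\conv\id$ ``for the cocycle $\xi$ determined by $(\partial,\lambda)$'', but the {\small DGHCP} data only provide a map $\lambda\colon G_0\to\mathfrak{g}^1$ on the \emph{body} together with the infinitesimal derivation $\partial$; to apply Proposition~\ref{prop:MC-1-isomorphism} you need a cocycle $\xi\in Z^1(\calG,\Ad)\subset\Der_e(\calC_G,\calC_G)$ on the full graded group, and no graded van~Est theorem producing such a $\xi$ from $(\partial,\lambda)$ is available (the van~Est isomorphism of Section~\ref{IntNGDLA}, invoked in Lemma~\ref{lem:extension_of_bar_lambda}, is for ordinary Lie groups and only yields $\lambda$ on $G_0$). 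Constructing the global object from the pair data is precisely the content of the theorem, and the paper does it by a different, explicit route: in the Harish--Chandra model $H=\Hom_{\mathfrakU(\mathfrakg_0)}\bigl(\mathfrakU(\mathfrakg),\calC^\infty(G_0)\bigr)$, when $\partial$ is inner one sets $Q=\partial^L-\partial^R$, which is multiplicative by Proposition~\ref{prop:exact} and is computed to be formula~\eqref{eqn:inner_multiplicative}; when $\partial$ is outer, one does not invert a Maurer--Cartan map but passes to the extended pair $\tilde{\mathfrak{g}}=\mathfrak{g}\oplus k\partial$ of Lemma~\ref{lem:extended_HC}, where $\partial$ becomes inner, integrates there, and restricts to $G$ (Lemma~\ref{lem:extension_to_DGHC}), which justifies replacing $[\partial,u]$ by $\partial(u)$ in~\eqref{eqn:outer_multiplicative}.

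Your proposed trichotomy for $Q^2=0$ is both left unproven and, as stated, not the right bookkeeping. Computing directly with~\eqref{eqn:outer_multiplicative} one finds
$(Q^2f)(u,g)=-f\bigl(u\,(\partial\lambda(g)+\tfrac{1}{2}[\lambda(g),\lambda(g)]),g\bigr)$,
so the whole obstruction is the Maurer--Cartan identity $\partial\lambda(g)+\tfrac{1}{2}[\lambda(g),\lambda(g)]=0$ --- and the axiom~\eqref{eqn:lambda} of Definition~\ref{def:DGHC} guarantees this only \emph{modulo the center} of $\mathfrakg$, since it constrains $\ad\circ\lambda$ rather than $\lambda$. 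So the mixed term is not ``governed by $\bar\lambda-\ad\circ\lambda$'' alone, and your argument does not close without a further step: either verify the Maurer--Cartan identity for the canonical $\lambda$ of Lemma~\ref{lem:extension_of_bar_lambda} (the central-valued defect $c(g)=\partial\lambda(g)+\tfrac12[\lambda(g),\lambda(g)]$ satisfies the cocycle identity $c(gh)=c(g)+\alpha_{G_0}(g,c(h))$, vanishes at $e$, and has zero derivative there by $\partial^2=0$, hence vanishes on simply connected $G_0$), or adopt the paper's device, where the issue never arises: for $Q=\partial^L-\partial^R$ with $\partial$ inner and $[\partial,\partial]=0$, the cross terms cancel by the supercommutation of left and right translations (Proposition~\ref{prop:left-right_commute}) and $Q^2=0$ is automatic, the outer case being reduced to this via the extended pair. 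In short: your skeleton (lift Theorem~\ref{thm:main} by matching $Q$ with $(\partial,\lambda)$) agrees with the paper's, but the two steps you defer --- building $Q$ from $(\partial,\lambda)$ and proving $Q^2=0$ --- are the theorem, and the mechanism you sketch for the second is insufficient as stated.
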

\begin{proof}
Let $(\mathfrak{g},G_0,\alpha_{G_0}, \partial, \lambda)$ be a DG Harish-Chandra pair. If $\partial$ is an inner derivation corresponding to a degree $1$ element of $\mathfrakg^1$ which we denote by the same letter (by Remark \ref{rem:inner-outer}
$\lambda$ is uniquely fixed by $\alpha_{G_0}$), then we define a multiplicative structure as the difference between left- and right- translations of $\partial$.
By use of Prop. \ref{prop:exact} this is a multiplicative vector field; it is easy to see that this vector field will give us back the differential $\partial$
in $\mathfrakg$.

\noindent More precisely, the multiplicative vector field $Q=\partial^L-\partial^R$ acts on an arbitrary smooth function $f$ on $G$ as follows:
\begin{align*}
(Qf)(u,g) &=(\partial^L f)(u,g)-(\partial^R f)(u,g)=(-1)^{\mathrm{deg}(u)}f(u{\alpha}_{G_0}(g,\partial),g)-f(\partial u,g)
\end{align*}
for any $u\in \mathfrakU({\mathfrakg})$, $g\in G_0$. On the other hand,
\begin{equation}\label{eqn:inner_multiplicative}
(Qf)(u,g) = (-1)^{\mathrm{deg}(u)}f(u\lambda(g),g)- f([\partial, u],g)\,,
\end{equation}
where $\lambda(g)={\alpha}_{G_0}(g,\partial)-\partial$.
Indeed,
\begin{align*}
(Qf)(u,g) &= (-1)^{\mathrm{deg}(u)}f(u\tilde{\alpha}_{G_0}(g,\partial),g)-f(\partial u,g) \\
              &= (-1)^{\mathrm{deg}(u)}f(u{\alpha}_{G_0}(g,\partial),g) - f([\partial, u],g)-(-1)^{\mathrm{deg}(u)}f(u\partial ,g) \\
              &= (-1)^{\mathrm{deg}(u)}f(u({\alpha}_{G_0}(g,\partial)-\partial),g)- f([\partial, u],g) \\
              &= (-1)^{\mathrm{deg}(u)}f(u\lambda(g),g)- f([\partial, u],g)\,,
\end{align*}

%\vskip 2mm
\noindent Now we use formula (\ref{eqn:inner_multiplicative}) to extend the integration procedure to the more genaral case as follows.
Let $\partial$ be an outer derivation; we apply Lemma \ref{lem:extension_of_bar_lambda}
to obtain a 1-cocycle $\lambda$ and thus the structure of a DG Harish-Chandra pair (see Definition \ref{def:DGHC}). By replacing of $[\partial, u]$ with $\partial(u)$
in (\ref{eqn:inner_multiplicative}), we obtain the formula for the multiplicative vector field $G$ on $G$:
\begin{equation}\label{eqn:outer_multiplicative}
(Qf)(u,g) = (-1)^{\mathrm{deg}(u)}f(u\lambda(g),g)- f(\partial (u),g)
\end{equation}
for all $u\in \mathfrakU({\mathfrakg})$, $g\in G_0$. The rest of the proof including the morphism property is straightforward.
\end{proof}

\subsubsection*{Extended Harish-Chandra pairs}

\begin{lem}\label{lem:extended_HC}
 Let $\mathfrak{g}$ be a {\small DGLA} over a field $k$ with an outer differential $\partial$. Then
\begin{itemize}[itemsep=-0.3em, leftmargin=-0.3em]
\item
$\tilde{\mathfrak{g}}=\mathfrak{g}\oplus k\partial$  admits a canonical structure of a {\small DGLA}, such that $\mathfrak{g}$
is a graded Lie subalgebra, $\partial^2=0$ and $[\partial, X]=\partial (X)$ for every $X\in\mathfrak{g}$. The differential
in $\tilde{\mathfrak{g}}$ is given by the adjoint action of $\partial$;
\item a $1-$cocycle $\lambda$ from Def. \ref{def:DGHC} is in one-to-one correspondence with an extension $\tilde{\alpha}_{G_0}$ of
 $\alpha_{G_0}$ to $\tilde{\mathfrak{g}}$;
\item if $G_0$ is simply connected then there exists a canonical extension
$\tilde{\alpha}_{G_0}$ of
$\alpha_{G_0}$ to $\tilde{\mathfrakg}$, which makes $(\tilde{\mathfrak{g}},G_0,\tilde{\alpha}_{G_0})$ into a Harish-Chandra pair.
\end{itemize}
\end{lem}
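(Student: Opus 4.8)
The plan is to establish the three bullets in sequence, since the second relies on the algebra built in the first and the third combines the second with Lemma~\ref{lem:extension_of_bar_lambda}. For the first bullet I would place $\partial$ in degree $1$ and define the bracket on $\tilde{\mathfrak{g}}=\mathfrak{g}\oplus k\partial$ by retaining the bracket of $\mathfrak{g}$, setting $[\partial,X]:=\partial(X)$ for $X\in\mathfrak{g}$, and $[\partial,\partial]:=0$. Graded antisymmetry is immediate, and the only real work is the graded Jacobi identity, checked on homogeneous generators: with all three arguments in $\mathfrak{g}$ it is the Jacobi identity of $\mathfrak{g}$; with exactly one argument equal to $\partial$ it is exactly the assertion that $\partial$ is a degree~$1$ derivation of the bracket of $\mathfrak{g}$; and with two arguments equal to $\partial$ every term collapses by $\partial^2=0$. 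The differential on $\tilde{\mathfrak{g}}$ is then $[\partial,-]$ by construction, and $[\partial,[\partial,-]]=0$ follows from Jacobi together with $[\partial,\partial]=0$.

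For the second bullet, note that any degree-$0$ representation $\tilde\alpha_{G_0}$ of $G_0$ on $\tilde{\mathfrak{g}}$ restricting to $\alpha_{G_0}$ on $\mathfrak{g}$ is determined by its value on the single new generator, which must take the form $\tilde\alpha_{G_0}(g,\partial)=\partial+\lambda(g)$ for a unique $\lambda\colon G_0\to\mathfrak{g}^1$. I would then match the two defining properties of a representation against the two conditions in Definition~\ref{def:DGHC}. Expanding $\tilde\alpha_{G_0}(gh,\partial)=\tilde\alpha_{G_0}(g,\tilde\alpha_{G_0}(h,\partial))$, and using that $\tilde\alpha_{G_0}$ agrees with $\alpha_{G_0}$ on $\mathfrak{g}^1$, reproduces exactly the cocycle identity~(\ref{eqn:lambda-cocycle}). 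Requiring each $\tilde\alpha_{G_0}(g,-)$ to be a Lie algebra automorphism and applying it to $[\partial,X]$ gives, after the substitution $Y=\alpha_{G_0}(g,X)$ and recalling that $\bar\lambda(g)=\alpha_{G_0}(g,-)\,\partial\,\alpha_{G_0}(g,-)^{-1}-\partial$, the identity $\bar\lambda(g)(Y)=[\lambda(g),Y]$ for all $Y$, which is precisely~(\ref{eqn:lambda}). Reading these computations backwards shows that any $\lambda$ satisfying both conditions yields such a representation, so the correspondence is bijective.

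For the third bullet I would invoke Lemma~\ref{lem:extension_of_bar_lambda}: as $G_0$ is simply connected and $\partial$ is outer, the van~Est isomorphism furnishes a canonical $1$-cocycle $\lambda$ with $(d\lambda)_e(X)=-\partial(X)$ and satisfying~(\ref{eqn:lambda}). Feeding this $\lambda$ into the correspondence of the second bullet produces the extension $\tilde\alpha_{G_0}$. It remains to verify the Harish-Chandra axioms: the degree-$0$ part is unchanged, so $\tilde{\mathfrak{g}}_0=\mathfrak{g}_0=\mathrm{Lie}(G_0)$ and $\tilde\alpha_{G_0}$ still induces $\mathrm{Ad}$ there; and for $(d\tilde\alpha_{G_0})_e=\mathrm{ad}$ I only need to test on $\partial$, where differentiating $\tilde\alpha_{G_0}(g,\partial)=\partial+\lambda(g)$ along $X\in\mathfrak{g}_0$ gives $(d\lambda)_e(X)=-\partial(X)$, matching $\mathrm{ad}_X(\partial)=[X,\partial]=-[\partial,X]=-\partial(X)$.

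The main obstacle is the bijection in the second bullet, specifically recognizing that the automorphism requirement on $\tilde\alpha_{G_0}(g,-)$, once specialized to brackets involving $\partial$, is not an additional constraint but is exactly the $\mathrm{ad}$-compatibility~(\ref{eqn:lambda}) already demanded of a differential graded Harish-Chandra pair. Making the conjugation in the definition of $\bar\lambda$ line up with $[\lambda(g),-]$ requires attentive bookkeeping of the substitution $Y=\alpha_{G_0}(g,X)$ and of the Koszul signs, but nothing here is genuinely deep.
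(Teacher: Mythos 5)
Your proposal is correct and follows essentially the same route as the paper: the paper's entire proof consists of declaring the first two bullets ``a straightforward computation'' and deducing the third from the second bullet combined with Lemma~\ref{lem:extension_of_bar_lambda}, which is exactly the structure of your argument. Your write-up simply supplies the computations the paper leaves implicit --- the case-check of the graded Jacobi identity, the identification $\tilde{\alpha}_{G_0}(g,\partial)=\partial+\lambda(g)$ with the cocycle condition (\ref{eqn:lambda-cocycle}) arising from the homomorphism property, and the matching of the automorphism requirement on $[\partial,X]$ with condition (\ref{eqn:lambda}).
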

\begin{proof} While the first two statements are resulting from a straightforward computation, the third one follows from the second statement combined with  Lemma \ref{lem:extension_of_bar_lambda}.
\end{proof}

\begin{defi}\label{def:extended_HC}
 We shall call $(\tilde{\mathfrak{g}},G_0,\tilde{\alpha}_{G_0})$ an \define{extended Harish-Chandra pair}.\footnote{The idea to interpret the integration of DGLA with an outer derivation in
terms of such an extended pair was suggested to us by C.~Laurent-Gengoux.}
\end{defi}

\noindent By construction, the extended Harish-Chandra pair $(\tilde{\mathfrak{g}},G_0,\tilde{\alpha}_{G_0})$ integrates the (extended) graded Lie algebra $\tilde{\mathfrakg}$
to a graded Lie group $\tilde{G}$ with a graded subgroup $G$, which corresponds to the initial Harish-Chandra pair $({\mathfrak{g}},G_0,{\alpha}_{G_0})$.
Taking into account that $\partial$ is now an inner derivation of $\tilde{\mathfrakg}$, we can integrate it to a multiplicative vector field $\tilde{Q}$ on $\tilde{G}$ by use of formula (\ref{eqn:inner_multiplicative}).

\begin{lem}\label{lem:extension_to_DGHC} $G$ is a differential graded Lie subgroup of $\tilde{G}$, such that the induced DGLG structure on $G$ coincides with the one given by
formula (\ref{eqn:outer_multiplicative}).
\end{lem}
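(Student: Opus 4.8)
The plan is to realize the inclusion $G \hookrightarrow \tilde{G}$ at the level of the function Hopf algebras and then to show that the homological vector field $\tilde{Q}$ constructed in the proof of Theorem~\ref{thm:main2} via formula~\eqref{eqn:inner_multiplicative} is tangent to $G$, with restriction exactly~\eqref{eqn:outer_multiplicative}. First I would record that, by Lemma~\ref{lem:extended_HC}, $\mathfrakg$ is a $\tilde{\alpha}_{G_0}$-invariant graded Lie subalgebra of $\tilde{\mathfrakg} = \mathfrakg \oplus k\partial$, so that $(\mathfrakg, G_0, \alpha_{G_0})$ is a sub-Harish--Chandra pair of $(\tilde{\mathfrakg}, G_0, \tilde{\alpha}_{G_0})$; by functoriality of the equivalence of Theorem~\ref{thm:main} this is precisely the assertion that $G$ is a graded Lie subgroup of $\tilde{G}$. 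Concretely, the inclusion $\mathfrakU(\mathfrakg) \hookrightarrow \mathfrakU(\tilde{\mathfrakg})$ dualizes to the restriction map $\iota^* \colon \calC(\tilde{G}) \to \calC(G)$, $f \mapsto f|_{\mathfrakU(\mathfrakg)}$, a surjective morphism of graded Hopf algebras whose kernel $I$ is the Hopf ideal cutting out $G$.

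The heart of the matter is a single computation. Let $u \in \mathfrakU(\mathfrakg)$, $g \in G_0$, and apply~\eqref{eqn:inner_multiplicative} to $f \in \calC(\tilde{G})$, giving $(\tilde{Q}f)(u,g) = (-1)^{\deg(u)} f(u\lambda(g),g) - f([\partial,u],g)$. Two observations make the right-hand side factor through $\iota^* f$. On the one hand $\lambda(g) \in \mathfrakg^1 \subseteq \mathfrakg$ by Definition~\ref{def:DGHC}, so $u\lambda(g) \in \mathfrakU(\mathfrakg)$. On the other hand, since $[\partial, X] = \partial(X)$ for $X \in \mathfrakg$ by Lemma~\ref{lem:extended_HC} and $[\partial, -]$ is a derivation of $\mathfrakU(\tilde{\mathfrakg})$, one gets $[\partial, u] = \partial(u) \in \mathfrakU(\mathfrakg)$ for every $u \in \mathfrakU(\mathfrakg)$, where $\partial$ now denotes the canonical extension of the differential to the enveloping algebra. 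Hence
\[
(\tilde{Q}f)(u,g) = (-1)^{\deg(u)} (\iota^* f)(u\lambda(g),g) - (\iota^* f)(\partial(u),g),
\]
which is exactly $\bigl(Q(\iota^* f)\bigr)(u,g)$ for $Q$ defined by~\eqref{eqn:outer_multiplicative}. This is nothing but the replacement of $[\partial,u]$ by $\partial(u)$ already performed in the proof of Theorem~\ref{thm:main2}, now read as a statement about the inclusion $\iota$, namely $\iota^* \circ \tilde{Q} = Q \circ \iota^*$.

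This relatedness settles everything. If $f \in I = \ker \iota^*$ then $\iota^*(\tilde{Q}f) = Q(\iota^* f) = 0$, so $\tilde{Q}(I) \subseteq I$ and $\tilde{Q}$ is tangent to $G$; thus $\tilde{Q}$ descends to a homological vector field $Q$ on $G$, making $G$ a dg manifold. That $Q$ squares to zero follows from $Q^2 \circ \iota^* = \iota^* \circ \tilde{Q}^2 = 0$ together with the surjectivity of $\iota^*$, and that $Q$ is multiplicative follows by the analogous diagram chase from the multiplicativity of $\tilde{Q}$ and the fact that $\iota^*$, hence $\iota^* \otimes \iota^*$, is a surjective morphism of coalgebras. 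Therefore $G$ is a differential graded Lie subgroup of $\tilde{G}$, and by the computation above its induced DGLG structure coincides with~\eqref{eqn:outer_multiplicative}. The only genuinely delicate points are the two algebraic facts feeding the central computation — that $\lambda$ takes values in $\mathfrakg^1$ rather than in all of $\tilde{\mathfrakg}$, and that the inner commutator $[\partial, u]$ in $\mathfrakU(\tilde{\mathfrakg})$ restricts, with the correct Koszul signs, to the outer action $\partial(u)$ on $\mathfrakU(\mathfrakg)$ — both of which are guaranteed by the structure of $\tilde{\mathfrakg}$ established in Lemma~\ref{lem:extended_HC}.
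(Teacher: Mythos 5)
Your proposal is correct and takes essentially the same route as the paper's own proof: both identify the ideal of functions on $\tilde{G}$ cutting out $G$ with the kernel of restriction to $\mathfrakU(\mathfrakg)$, and use the two facts that $u\lambda(g)\in\mathfrakU(\mathfrakg)$ and $[\partial,u]=\partial(u)\in\mathfrakU(\mathfrakg)$ for $u\in\mathfrakU(\mathfrakg)$ to conclude that $\tilde{Q}$ preserves this ideal and restricts to the vector field given by formula (\ref{eqn:outer_multiplicative}). Your extra verifications (the intertwining relation $\iota^*\circ\tilde{Q}=Q\circ\iota^*$, together with $Q^2=0$ and multiplicativity deduced from surjectivity of $\iota^*$) simply make explicit what the paper's terser argument leaves implicit.
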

\begin{proof}
  Notice that
 the ideal of $G$ in the graded algebra of smooth functions on $\tilde{G}$, i.e. the ideal of functions vanishing on $G$ is
\begin{align}
 I_G=\{f\in \Hom_{\mathfrakU(\mathfrakg_0)} \big( \mathfrakU(\tilde{\mathfrakg}), \calC^\infty(G_0) \big)\, | \,
f\left( \mathfrakU({\mathfrakg}) =0 \right) \,.
\end{align}
If $u\in \mathfrakU({\mathfrakg})\subset \mathfrakU(\tilde{\mathfrakg})$ then $u\lambda(g)$ and $[\partial, u]\equiv\partial (u)$
also belong to $\mathfrakU({\mathfrakg})$, therefore for any $f\in I_G$ one has $(\tilde{Q}f)(u,g)=0$ and thus $\tilde{Q}f\in I_G$. Finally the restriction of $\tilde{Q}$
onto $G$ defines the multiplicative
structure $Q$ on $G$ which gives back $\partial$ in $\mathfrakg$ and the formula for $Q$ coincides with (\ref{eqn:outer_multiplicative}).
\end{proof}

\subsubsection*{Examples and exercises}
This construction reverses the procedure described above of ``differentiating''
of a {\small DGLG} to a {\small DGLA}.
 It can for instance be applied for the  \textbf{examples} from the  previous section, namely recover:  the shifted tangent bundle to a Lie group; the Chevalley--Eilenberg Lie group in the graded case.
A motivated reader may also consider simpler examples (i.e. specifications) like:
  the dg Lie group of an abelian {\small DGLA}; the dg Lie group of a {\small DGLA} concentrated in degree $d$.

\bigskip

\newpage
\section{Conclusion / Discussions}
In this paper we addressed the question of integrating differential graded Lie algebras to differential graded Lie groups.
As mentioned in the introduction, this is a part of a big project of a systematic study of the integration problem on the categorical level:
it should include among others some $\infty$ structures and generalized geometry, with potentially non-trivial links between them.

Let us stress again, even if initially the strategy of this paper meant to repeat essentially the approach of \cite{vish} in the case of super
{\small DLG}s and {\small DLA}s (i.e. $\ZZ/2\ZZ$-graded) and add ``by hand'' a $Q$-structure to it, the question turned out to be more intricate: working with $\ZZ$- and even $\NN$- graded objects presents  conceptual challenges. So the resemblance of the final construction
for the $\NN$-graded case
to the super case is misleading: it relies on the results that are \emph{not}  straightforward generalizations, and therefore had to be explicitly explained.

Two points are worth  mentioning here: \\
First. The main result concerns equivalence of categories, and there graded Harish-Chandra pairs play the key role. The concept of differential graded Harish-Chandra pairs
that we introduced, is an important step -- those seem to have higher analogues and actually give a possible way to generalize the result
to Lie algebroids and possibly other structures. \\
Second. As we have understood from the section \ref{HC}, the construction works as long as one can safely apply the
Poincar\'e--Birkhoff--Witt theorem.
But the tricky point is before that, already at the level of definition of the functional spaces on graded algebras/groups.
Namely natural elements are now formal power series in graded variables, not polynomials -- one thus loses some intuition about their
behaviour (see appendix \ref{sec:funcan} to get some flavour).
We thought of it as an auxiliary technical issue, but again in the $\ZZ$-graded case it turned out to be more interesting.
We realized that
 careful description of the functional space, the universal envelopping algebra with its properties, as well as the Hopf algebra related
questions, is a problem worth being detailed by itself. Thus, not to overload the presentation here, we are going to devote a separate paper (\cite{KPS}) exclusively to this topic.

\textbf{Acknowledgments.}

The research of A.K. was supported by grant no. 18-00496S of the Czech Science Foundation.
V.S. started working on this project in the University of Luxembourg, his research at that time was supported by the Fonds National de la Recherche,  project F1R-MTH-AFR.\\
V.S. is also thankful to the La Rochelle University for the
Young Researcher's Grant (``Action Incitative Jeune Chercheur''), that permitted to arrange several meetings of him and A.K. at the final stages of this work.

\newpage
\appendix
\setlength{\parskip}{0mm}

\section{Lie group and Lie algebra cohomologies}\label{AppCoho}

\textbf{Lie group cohomology.}

Let $G$ be a Lie group and let $A$ be a smooth $G$-module, i.e. an Abelian Lie group endowed with a smooth $G$-action $\zr:G\times A\to A$. For $g\in G$, we write $\zr(g,-)=g\cdot -$, and, for $a,a'\in A$, we have $g\cdot(a+a')=g\cdot a+g\cdot a'$.

The cochain complex for the smooth cohomology of the Lie group $G$ `represented' on $A$ by $\zr$ is defined by
\begin{equation}
C_{\op{sm}}^n(G,\zr) = \smoothF(G^{\times n},A),\;n\in\N\;,
\end{equation}
where $C_{\op{sm}}^0(G,\zr) := A$.
The coboundary map $d$ for smooth group cohomology is the same as for ordinary group cohomology,
\begin{equation}
d\xi(g_0,\ldots, g_n) = g_0 \cdot \xi(g_1,\ldots, g_n) + \sum_{i=1}^n (-1)^i \xi( \ldots, g_{i-1}g_{i}, \ldots) + (-1)^{n+1} \xi(g_0,\ldots g_{n-1})\;.
\end{equation}

In particular, %$$ d\xi(g_0) = g_0 \cdot \xi - \xi\;,$$ so $H^0(G,\zr) = Z^0(G,\zr) = A^G$, with obvious notation, and
%\begin{equation}
$ d\xi(g_0,g_1) = g_0 \cdot \xi(g_1) - \xi(g_0g_1) + \xi(g_0). $
%\end{equation}

Hence, if \be\Ad:G\ni g\mapsto \Ad_g=T_{g^{-1}}L_g\circ T_eR_{g^{-1}}=T_gR_{g^{-1}} \circ T_eL_g\in \op{Aut}(\mathfrak{g})\ee is the adjoint representation of $G$ on its Lie algebra $\mathfrak g$, a 1-cocycle $\xi\in Z_{\op{sm}}^1(G,\Ad)$, is a map $\xi \in \smoothF(G,\mathfrakg)$ that satisfies the equation
\begin{equation}
\label{eq:cocycle}
\xi(gh) = \Ad_g (\xi(h))  + \xi(g)\;,
\end{equation}
for any $g, h \in G$.

\textbf{Lie algebra cohomology.}

Let $\mathfrak g$ be a Lie algebra (with bracket $[-,-]$) and let $\zr:\mathfrak{g}\to \op{End}(V)$ be a representation of $\mathfrak g$ on a vector space $V$.

The cochain complex for the Chevalley-Eilenberg cohomology of the Lie algebra $\mathfrak g$ represented on $V$ by $\rho$ is defined by
\begin{equation}
\CCE^n(\mathfrakg,\rho) = {\cal A}(\mathfrakg^{\times n}, V),\;n\in\N\;,\end{equation} where the {\small RHS} is the space of $n$-linear antisymmetric maps from $\mathfrak g$ to $V$ and where $\CCE^0(\mathfrakg,\rho) := V$.
The coboundary map $d$ is given by

\begin{equation}
d\omega(X_0, \ldots, X_n) = \sum_{i=0}^n (-1)^i \rho(X_i)\left( \zw(X_0,\dots \hat{\imath} \dots, X_n)\right)
\end{equation}
$$+\sum_{i<j} (-1)^{i+j} \zw([X_i,X_j],X_0, \dots \hat{\imath}\dots \hat{\jmath} \dots ,X_n)\;,$$ with standard notation.

In particular, %$$d\zw(X_0) = \zr(X_0)(\zw)\;,$$ so $H^0(\mathfrakg,\rho) = Z^0(\mathfrakg,\rho) = V^\mathfrakg$, and
for the adjoint representation $$\op{ad}:\mathfrak{g}\ni X\mapsto \op{ad}_X=[X,-]\in\op{Der}(\mathfrak{g})\;,$$ we have $$d\zw(X,Y) = [X,\zw(Y)] - [Y,\zw(X)] - \zw([X,Y])\;,$$ so $Z_{\op{CE}}^1(\mathfrakg, \ad) = \Der(\mathfrakg)$.

\newpage

\setlength{\parskip}{3mm}

\section{Graded manifolds} \label{sec:gr-man-app}

In this appendix we recall (or introduce) some definitions related to graded manifolds.
The approach is rather similar to that of~\cite{ccf}, which treats the $\ZZ/2\ZZ$-graded (``super'') case, the main point is to make some ``folkloric'' statements
explicit and fix the notations for the current paper to make it self-consistent.

\subsection{Graded manifolds -- definition}

Let $\Gamma$ be a commutative monoid and $\varepsilon \colon \Gamma^2 \to \RR^\times$ is a commutation factor (see~\cite[III.46]{bou}) and $V$ is a $\Gamma$-graded vector space, we define its \define{graded symmetric algebra}
\begin{equation}
\grsym V = \otimes V \big/ \left\langle v \otimes w - \varepsilon(v,w) \, w \otimes v \mid v, w \in V^{\text{hom}} \right\rangle
\end{equation}
where we write $\varepsilon(v,w)$ for $\varepsilon(|v|,|w|)$, and $|\bullet|$ denotes the degree of $\bullet$.
Since the ideal by which we quotient is homogeneous (for the $\Gamma$-grading),
 the graded symmetric algebra is a $\Gamma$-graded $\varepsilon$-commutative unital\footnote{By definition, a graded algebra is \define{unital} if it has a multiplicative unit which is homogeneous of degree 0.
} algebra.
The ``super'' case corresponds to $\Gamma = \ZZ_2 \equiv \ZZ/2\ZZ$ and $\varepsilon(\gamma_1,\gamma_2) = (-1)^{|\gamma_1| |\gamma_2|}$.

To define the degree of graded linear maps,
we need $\Gamma$ to be cancellative, which is equivalent to being embeddable in a commutative group.
In the following, ``graded'' will mean ``$\Gamma$-graded'' for some fixed commutative monoid $\Gamma$, and ``commutative'' will mean ``$\varepsilon$-commutative'' for an $\varepsilon$ usually left implicit.
In most of this article, $\Gamma = \ZZ/2\ZZ$, $\NN$ (meaning $\ZZ_{\ge 0}$), or $\ZZ$, that is the ``degree zero'' actually
corresponds to the element $0 \in \Gamma$,
and the commutation factor will be the one given above.

If $U$ is an open subset of  $\RR^n$ and $V$ is a graded vector space, we define the unital graded $\RR$-algebra
\begin{equation}
\gradedF(U|V) = \smoothF(U) \otimes \grsym V
\end{equation}
and we call it an \define{algebraic model}.
It is $\varepsilon$-commutative.
Quotienting it by the ideal generated by the homogeneous elements of nonzero degree, we obtain the unital graded $\RR$-algebra isomorphism $\gradedF(U|V) \big/ \gradedF(U|V)^{\neq 0} \simeq \gradedF(U|V_0)$.
If $V_0 = \{0\}$, the quotient map
$\gradedF(U|V) \twoheadrightarrow \gradedF(U|V) \big/ \gradedF(U|V)^{\neq 0} \simeq \smoothF(U)$ is denoted by $f \mapsto \widetilde{f} = f_\varnothing$.

By abuse of notation, we also denote by $\gradedF(U|V)$ the unital-graded-algebra-ed space $\left( U, \gradedF(-|V) \right)$ it naturally defines, and we call it a \define{local model}.
A \define{``something''-ed\footnote{The etymology comes from ``ring'' -- ``ringed'' often appearing in the literature.}
 space} is a topological space with a sheaf of ``something''s on it called its \define{structure sheaf}.

A \define{morphism} of these is a pair $\phi = (\widetilde{\phi}, \phi^\sharp)$ where $\widetilde{\phi}$ is a continuous map between the underlying spaces and $\phi^\sharp$ is a sheaf morphism from the pullback by
$\widetilde{\phi}$  of the target sheaf to the  source sheaf:
$$
     \phi \colon \gradedF(U|V) \to \gradedF(U'|V') \quad \Leftrightarrow \quad
     \tilde  \phi \colon U \to U' \text{ and }
     \phi^\sharp \colon \tilde \phi^*({\gradedF(U'|V')}) \to \gradedF(U|V)
$$
Providing the data of $\phi = (\tilde \phi, \phi^\sharp)$ is equivalent to
the ``dual'' construction
$\psi = (\tilde \phi, \phi_\sharp)$, where
$\phi_\sharp \colon {\gradedF(U'|V')} \to \tilde \phi_*{\gradedF(U|V)}$ (\cite{shafar}).

For brevity, we will write ``algebra-ed'' for ``unital-graded-algebra-ed''.

\begin{lem}
\label{lem:local}
If $\gradedF(U|V) \simeq \gradedF(U'|V')$ either as algebra-ed spaces, or as graded unital algebras with $V_0=V'_0=\{0\}$,
then $U \simeq U'$ and $V \simeq V'$.
\end{lem}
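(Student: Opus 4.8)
The content of the lemma is that the open set $U$ (as a smooth manifold) and the graded vector space $V$ are invariants of the local model $\gradedF(U|V) = \smoothF(U) \otimes \grsym V$; I would recover the two separately. Under either hypothesis one first extracts a degree-$0$ (grading-preserving) isomorphism $\Phi$ of the graded algebras of global sections, and in the algebra-ed space case also a homeomorphism $\tilde\phi \colon U \to U'$ of the underlying spaces.

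\emph{Recovering the body.} Being grading-preserving, $\Phi$ maps the ideal $\gradedF(U|V)^{\neq 0}$ generated by the homogeneous elements of nonzero degree onto $\gradedF(U'|V')^{\neq 0}$, so it descends to an isomorphism of the quotients. As recalled just before the statement, this quotient is $\gradedF(U|V_0)$ in general and equals $\smoothF(U)$ exactly when $V_0 = \{0\}$; thus under the second hypothesis $\Phi$ induces an $\RR$-algebra isomorphism $\smoothF(U) \cong \smoothF(U')$. The classical reconstruction theorem — a smooth manifold, with its smooth structure, is determined by its $\RR$-algebra of global functions, every such isomorphism being the pullback along a unique diffeomorphism — then yields $U \cong U'$, in particular $\dim U = \dim U'$. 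Under the first hypothesis the body is handed to us directly by $\tilde\phi$, so no reconstruction theorem is needed: when $V_0 = \{0\}$ the reduced structure sheaf $\gradedF(-|V)/\gradedF(-|V)^{\neq 0}$ is $\smoothF$, and a sheaf isomorphism of structure sheaves over $\tilde\phi$ is precisely the datum of a diffeomorphism.

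\emph{Recovering $V$.} Fix $x \in U$ and pass to the fibre at $x$, i.e. tensor the $\smoothF(U)$-module $\gradedF(U|V)$ down along evaluation $\mathrm{ev}_x \colon \smoothF(U) \to \RR$:
\begin{equation*}
\gradedF(U|V) \otimes_{\smoothF(U),\,\mathrm{ev}_x} \RR \;\cong\; \grsym V .
\end{equation*}
Since $V_0 = \{0\}$, any graded algebra map $\grsym V \to \RR$ kills every generator (these have nonzero degree), hence kills all of $\grsym^{\geq 1} V$; thus there is a unique graded augmentation, with ideal $\mathfrak{m} = \grsym^{\geq 1} V$, and
\begin{equation*}
\mathfrak{m}/\mathfrak{m}^2 \;\cong\; \grsym^1 V \;=\; V
\end{equation*}
as graded vector spaces. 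All these operations are natural; since $\Phi$ matches the body characters $\mathrm{ev}_x$ through the diffeomorphism found above, it induces a graded linear isomorphism $V \cong V'$. Equivalently, one may use $\gradedF(U|V)^{\neq 0}/(\gradedF(U|V)^{\neq 0})^2 \cong \smoothF(U) \otimes V$ as graded $\smoothF(U)$-modules and then take its fibre at $x$.

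The main obstacle is the first step, extracting the smooth body. For the bare-algebra hypothesis it rests on the reconstruction of a manifold from its function algebra, and on $V_0 = \{0\}$: this is exactly what makes $\gradedF(U|V)/\gradedF(U|V)^{\neq 0}$ equal to $\smoothF(U)$ rather than the larger algebra $\gradedF(U|V_0)$, in which the degree-$0$ coordinates on $V_0$ cannot be separated from the base coordinates by the grading alone — whence the need for that hypothesis in the second disjunct. In the algebra-ed space disjunct the topology supplies the body for free and the rest is the routine grading bookkeeping of the second step; the only genuinely delicate point there is, when $V_0 \neq \{0\}$, to isolate the smooth functions on the base inside the reduced stalks $\smoothF_x \otimes \grsym(V_0)$, for which one leans on the fact that $\tilde\phi$ already fixes the topological body.
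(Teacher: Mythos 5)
Your proof is correct, and its first half (recovering the body) coincides with the paper's: both pass to the quotient by the ideal $\gradedF(U|V)^{\neq 0}$, reduce to an algebra isomorphism $\smoothF(U) \simeq \smoothF(U')$, and invoke the classical reconstruction of a manifold from its function algebra, the algebra-ed-space case being immediate from the underlying homeomorphism. Where you genuinely diverge is in recovering $V$. The paper argues degree by degree: for each $\gamma$ it considers the subalgebra generated by the elements of degree $\gamma$, identifies it with $\smoothF(U) \otimes \grsym V_\gamma$, and compares the ranks of the modules of derivations, $\dim U + \dim V_\gamma$ versus $\dim U + \dim V'_\gamma$, concluding $V_\gamma \simeq V'_\gamma$ by equality of (finite) dimensions. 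You instead recover all of $V$ at once, as $\mathfrak{m}/\mathfrak{m}^2$ for the unique graded augmentation of the fibre $\grsym V$ at a point. Your route buys three things: it produces a canonical graded-linear isomorphism $V \simeq V'$ rather than a mere dimension count in each degree; it does not use finite-dimensionality of the homogeneous components; and it quietly sidesteps an imprecision in the paper's step, since the subalgebra generated by the degree-$\gamma$ elements is in general strictly larger than $\smoothF(U) \otimes \grsym V_\gamma$ (products of lower-degree generators can also land in degree $\gamma$, e.g.\ $\gamma = 1+1$), so the paper's ``isomorphism invariant'' really needs an inductive correction that your fibre argument renders unnecessary; conversely, the paper's count is shorter and needs no discussion of naturality. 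One point you should tighten: the tensor product $\otimes_{\smoothF(U),\,\mathrm{ev}_x}$ is taken over $\smoothF(U)$, and an abstract graded isomorphism $\Phi$ preserves this subalgebra only because, with $V_0 = \{0\}$ and $\NN$-grading, it is exactly the degree-$0$ component — for $\ZZ$-gradings with generators of both signs the degree-$0$ component of $\gradedF(U|V)$ is strictly larger than $\smoothF(U)$. Your parenthetical variant via $\gradedF(U|V)^{\neq 0}\big/\bigl(\gradedF(U|V)^{\neq 0}\bigr)^2 \simeq \smoothF(U) \otimes V$, or equivalently taking $\mathfrak{m}_x = \ker \chi_x$ for the graded character $\chi_x$ and extracting the nonzero-degree part of $\mathfrak{m}_x/\mathfrak{m}_x^2$, is manifestly isomorphism-invariant and settles this, so it is the formulation to prefer.
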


\begin{proof}
If they are isomorphic as spaces, then by definition $U \simeq U'$, else by the above $\gradedF(U|V_0) \simeq \gradedF(U'|V'_0)$ as quotients, and from the hypothesis, $\smoothF(U) \simeq \smoothF(U')$ and it follows by a classical result that $U \simeq U'$.
Now, if one considers the subalgebra generated by elements of a given degree $\gamma$, which is an isomorphism invariant, one obtains $\smoothF(U) \otimes \grsym V_\gamma \simeq \smoothF(U) \otimes \grsym V'_\gamma$. The ranks of the modules of derivations of these algebras are respectively $\dim U + \dim V_\gamma$ and $\dim U + \dim V'_\gamma$, so $V_\gamma$ and $V'_\gamma$ have the same dimension, so are isomorphic, and $V \simeq V'$.
\end{proof}

\begin{defi}
A \define{graded manifold} is a paracompact Hausdorff unital-graded-algebra-ed space, locally modelled as
$\gradedF(U|V)$, where $U$ is an open subset of an $\RR^n$ and $V$ is a graded vector space with $V_0 = \{0\}$.
\end{defi}

\begin{rmk}
The Lemma \ref{lem:local} guarantees that a graded manifold is well defined, and sometimes in literature it is not
proven but included in the definition
of a ``graded manifold of body-dimension $n$ and modelled on $V$''.
\end{rmk}

A \define{morphism} of graded manifolds is a morphism of algebra-ed spaces.
In other words, the category $\cat{GMan}$ of graded manifolds is a full subcategory of the category of algebra-ed spaces.
In particular, morphisms are of degree 0.

We denote by $\calO_\calM$ the structure sheaf of the graded manifold $\calM$.
If  $\calM$ is a graded manifold, the topological space $|\calM|$, which is covered by
open sets $U$ from couples $(U|V)$,
inherits the structure of a (non-graded) manifold since we saw that $\smoothF(U)$ can be recovered naturally from $\gradedF(U|V)$.
It is called the \define{body} of $\calM$ and is sometimes also denoted by $\calM_0$, $\widetilde{\calM}$ or $\bar{\calM}$.
This gives a functor $\cat{GMan} \to \cat{Man}$ which is a retraction (hence full and surjective).
Any (smooth) manifold is considered as trivially graded --- that is the functor is  not faithful,  what makes graded manifolds interesting.

\begin{rmk} \label{rmk:completion}
Defined like this, the notion of a graded manifold is enough for the purpose of this paper, namely for the $\NN$-graded case. For the general $\ZZ$-graded
situation one may need a suitable completion of the algebra of graded polynomials discussed above. For instance, in the category of $\Z_2^n$-graded manifolds, which was introduced and studied, together with the corresponding $\Z_2^n$-Berezinian and (low-dimensional) $\Z_2^n$-integration-theory, in \cite{NP4, NP5, NP1}, formal power series are unavoidable. In fact most of the constructions that will follow in this section and section \ref{DGLG} remain valid in that case as well. The subtleties occur for the construction of the graded Harish-Chandra pairs (cf. section \ref{HC}) --
we are going to address this question in a separate paper (\cite{KPS}).

\end{rmk}

The category of graded manifolds is a (full) subcategory of the category of locally algebra-ed spaces.\footnote{In the real case we are considering the construction is similar to locally ringed spaces. In the complex case apparently there may be subtleties, but they appear already for the base manifold, so this is not an issue specific to grading. The way out in the complex case is to consistently use the sheaf-theoretic terminology, like in \cite{vish}.}
A consequence of locality is the following.
\begin{prop}
If $\phi = (\widetilde{\phi},\phi^\sharp) \colon \calM \to \calN$ is a morphism of graded manifolds, then for all open subsets $U \subseteq |\calN|$ and functions $f \in \calO_\calN(U)$, one has
$
\widetilde{\phi^\sharp(f)}
= \widetilde{\phi}^* \left( \widetilde{f} \right)
$
viewed
as functions in $\smoothF \left( \widetilde{\phi}^{-1}(U) \right)$.
\end{prop}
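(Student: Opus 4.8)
The plan is to reduce the sheaf-theoretic identity to a pointwise statement about bodies, and then to recognise both sides as the unique real character of a local stalk. First I would observe that $\widetilde{\phi^\sharp(f)}$ and $\tilde\phi^*(\widetilde f)$ are both ordinary smooth functions on $\tilde\phi^{-1}(U) \subseteq |\calM|$, so it suffices to check that they agree at every point $x \in \tilde\phi^{-1}(U)$; write $y = \tilde\phi(x)$. Passing to stalks, $\phi^\sharp$ induces an $\RR$-algebra homomorphism $\phi^\sharp_x \colon \calO_{\calN,y} \to \calO_{\calM,x}$ carrying the germ $f_y$ to the germ $(\phi^\sharp f)_x$. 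Using the body-evaluation maps $\eval_z \colon g \mapsto \widetilde g(z)$, the two numbers I must compare are $\widetilde{\phi^\sharp(f)}(x) = (\eval_x \circ \phi^\sharp_x)(f_y)$ and $(\tilde\phi^*\widetilde f)(x) = \widetilde f(y) = \eval_y(f_y)$, so the proposition reduces to the equality of characters $\eval_x \circ \phi^\sharp_x = \eval_y$ on $\calO_{\calN,y}$.

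The single non-formal ingredient, and the reason the statement is labelled ``a consequence of locality'', is that each stalk $\calO_{\calM,x}$ is a local $\RR$-algebra whose residue field is $\RR$, with residue map exactly $\eval_x$. Concretely, a germ $g$ is a unit if and only if $\widetilde g(x) \neq 0$: if $\widetilde g(x) \neq 0$ one inverts $g = \widetilde g\,(1 + \widetilde g^{-1} n)$, where $n$ is the positive-degree (augmentation-ideal) part of $g$, by the geometric series $\sum_{k \ge 0}(-\widetilde g^{-1} n)^k$, which converges in the Fréchet structure sheaf; conversely a unit has non-vanishing body. Hence the non-units form the maximal ideal $\mathfrak{m}_x = \ker \eval_x$ and $\calO_{\calM,x}/\mathfrak{m}_x \cong \RR$. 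This is precisely the assertion, recorded earlier, that $\cat{GMan}$ sits inside the category of locally algebra-ed spaces, and I would simply invoke it here (for the source stalk $\calO_{\calN,y}$).

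With this in hand the conclusion is the standard fact that a local $\RR$-algebra with residue field $\RR$ admits exactly one $\RR$-algebra character to $\RR$. Indeed $\eval_x \circ \phi^\sharp_x \colon \calO_{\calN,y} \to \RR$ is a nonzero $\RR$-algebra homomorphism (it sends $1$ to $1$), so its image is the field $\RR$ and its kernel is a maximal ideal; since $\calO_{\calN,y}$ is local this kernel must be the unique maximal ideal $\mathfrak{m}_y$. Being $\RR$-linear and fixing the constants, the homomorphism then coincides with the quotient $\calO_{\calN,y} \to \calO_{\calN,y}/\mathfrak{m}_y \cong \RR$, i.e. with $\eval_y$. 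Therefore $(\eval_x \circ \phi^\sharp_x)(f_y) = \eval_y(f_y)$ for every germ, which is the desired pointwise equality; letting $x$ range over $\tilde\phi^{-1}(U)$ yields $\widetilde{\phi^\sharp(f)} = \tilde\phi^*(\widetilde f)$.

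The only delicate point is the locality input of the second paragraph: in the $\NN$-graded (let alone $\ZZ$-graded) setting the augmentation ideal is not nilpotent, so invertibility of $1 + \widetilde g^{-1} n$ genuinely requires working in the completed, Fréchet structure sheaf rather than with graded polynomials, which is exactly the completion subtlety flagged in Remark \ref{rmk:completion}. Everything else is formal: once the stalks are known to be local with residue field $\RR$, the argument is purely the uniqueness of the real point of such an algebra and uses nothing about $\phi^\sharp_x$ beyond its being a unital $\RR$-algebra map — in particular one need not separately verify that $\phi^\sharp_x$ is a local homomorphism.
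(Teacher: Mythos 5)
Your argument is correct and is exactly the proof the paper intends: the paper states this proposition without proof, merely as ``a consequence of locality,'' and your reduction to stalks, the identification of the maximal ideal with $\ker \eval_x$ via the geometric-series inversion, and the uniqueness of the real character of a local $\RR$-algebra with residue field $\RR$ is the standard elaboration of that one-line justification. Your closing caveat is also well placed: once there are even generators of nonzero degree the augmentation ideal is not nilpotent, so the stalks are local only for the completed (Fr\'echet, formal power series) structure sheaf of Remark \ref{rmk:completion} and Appendix \ref{sec:funcan}, not for the naive polynomial model.
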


\textbf{Functor to supermanifolds.}
If the commutation factor is  trivial, then a graded manifold (with $V$ finite-dimensional) can be seen as a usual manifold up to completion.
Indeed, we first forget the grading, and then we complete the algebra of functions, using the fact that polynomials are dense in the usual topology.
Graded morphisms are then mapped to smooth morphisms (this is possible since the sheaf component of a graded morphism can be defined by specifying only its restriction to $\smoothF(U)$ and the finite dimensional space $V$).
For instance, $\gradedF(U|\RR^n[\gamma]) \mapsto \smoothF(U) \otimes \grsym(\RR^n[0]) \mapsto \smoothF(U \times \RR^n)$.
This functor is different from the ``body'' functor, which is a subfunctor of that one.

If $\Gamma = \NN$ or $\ZZ$ and the commutation factor is the standard nontrivial one, then there is a functor from the category of  graded manifolds (with $V$ finite-dimensional) to the category of supermanifolds.
It is obtained by the map $\NN$ or $\ZZ \to \ZZ/2\ZZ$, and then completing the algebra of functions of degree 0, for instance, $\gradedF(U | \RR^n[2k]) \mapsto \smoothF(U) \compTens \grsym(\RR^n[0]) \mapsto \smoothF(U \times \RR^n)$.

\subsection{Products of graded manifolds}

We now turn to the question of products of graded manifolds.
The binary coproduct of the algebraic models in the category of unital graded commutative algebras is the tensor product $\gradedF(U_1|V_1) \otimes \gradedF(U_2|V_2)$ with canonical inclusions, and the initial object is $\gradedF(\ast|0) \simeq \RR$.
The coproduct of two algebraic models is in general not an algebraic model anymore.

However we already know what answer is reasonable, and we set it as a definition.
If $\calM_i$ are two graded manifolds, then we define $\calM_1 \times \calM_2$ to be the topological space $|\calM_1| \times |\calM_2|$ with structure sheaf defined by
\begin{equation}
\calO_{\calM_1 \times \calM_2} (U_1 \times U_2) =  \gradedF(U_1 \times U_2|V_1 \oplus V_2)
%\gradedF(U_1|V_1) \check{\otimes} \gradedF(U_2|V_2)
\end{equation}
if $U_i$ is an open subset of $|\calM_i|$ such that $\calO_{\calM_i}(U_i) \simeq \gradedF(U_i|V_i)$ as algebra-ed spaces (recall that it is sufficient to define a sheaf on a basis of the topology) and obvious restriction maps.
The product is well-defined because of Lemma~\ref{lem:local}. For details about topology, tensor products, completions, etc. see the appendix
\ref{sec:funcan}.

With a bit more effort one can show that the product defined is a categorical product (see \cite{NP3}):

\begin{prop}
The category of graded manifolds is cartesian monoidal, with terminal object $(\{0\},\RR)$, and the ``body'' functor preserves finite products.
\end{prop}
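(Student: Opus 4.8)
The plan is to take the object $\calM_1 \times \calM_2$ already constructed, equip it with the evident projections, and verify that it satisfies the universal property of a categorical product; then to check that $(\{0\},\RR)$ is terminal; and finally to observe that the body functor carries these to the corresponding structures in $\cat{Man}$. Since $\cat{GMan}$ is a full subcategory of locally algebra-ed spaces, every construction may be performed on the basis of product opens $U_1 \times U_2$ and then glued, so throughout I work locally and invoke locality only at the assembly stage.

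For the terminal object, a morphism $\calM \to (\{0\},\RR)$ consists of the (unique) constant map $\widetilde{\phi} \colon |\calM| \to \{0\}$ together with a unital graded algebra morphism $\RR \to \calO_\calM(|\calM|)$. As $\RR$ is the initial object among unital graded commutative $\RR$-algebras (its structure map being the unit), this algebra morphism is forced to be the unit, hence unique; so $(\{0\},\RR)$ is terminal.

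For the product, the first step is to rewrite the defining local model. Using the standard isomorphisms $\grsym(V_1 \oplus V_2) \simeq \grsym V_1 \otimes \grsym V_2$ and $\smoothF(U_1 \times U_2) \simeq \smoothF(U_1) \compTens \smoothF(U_2)$ (see Appendix \ref{sec:funcan}), one identifies
\begin{equation*}
\calO_{\calM_1 \times \calM_2}(U_1 \times U_2) \simeq \gradedF(U_1|V_1) \compTens \gradedF(U_2|V_2),
\end{equation*}
the completed tensor product of the two local models. The projections $\proj_i \colon \calM_1 \times \calM_2 \to \calM_i$ are then given on bodies by the topological projections and on algebras by the canonical inclusions $f \mapsto f \compTens 1$ and $f \mapsto 1 \compTens f$. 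To verify the universal property, let $f_i = (\widetilde{f}_i, f_{i\sharp}) \colon \calN \to \calM_i$ be morphisms. On bodies, the universal property of products of manifolds yields a unique smooth $\widetilde{\Phi} = (\widetilde{f}_1, \widetilde{f}_2)$. On algebras, the completed tensor product is the coproduct in the pertinent category of unital graded commutative Fr\'echet algebras, so the pair $f_{1\sharp}, f_{2\sharp}$ with common target $\calO_\calN$ induces the unique algebra morphism $a \compTens b \mapsto f_{1\sharp}(a)\, f_{2\sharp}(b)$; here $\varepsilon$-commutativity of the target is exactly what guarantees this is an algebra map, and it manifestly satisfies $\proj_i \circ \Phi = f_i$ and is the only such. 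Gluing these local morphisms over the basis of product opens, using locality, produces the required unique $\Phi$.

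The step I expect to be the main obstacle is analytic rather than formal: one must confirm that the coproduct universal property, elementary for the algebraic tensor product, survives completion, i.e. that $a \otimes b \mapsto f_{1\sharp}(a)\, f_{2\sharp}(b)$ extends continuously to $\compTens$. This rests on nuclearity of the Fr\'echet structure sheaves and the behaviour of the projective tensor product recorded in Appendix \ref{sec:funcan}, and it is also where the identification $\smoothF(U_1 \times U_2) \simeq \smoothF(U_1) \compTens \smoothF(U_2)$ is genuinely needed. Finally, the body functor preserves these products essentially by construction: the underlying space of $\calM_1 \times \calM_2$ is $|\calM_1| \times |\calM_2|$, and since $(V_1 \oplus V_2)_0 = \{0\}$ the degree-zero reduction $f \mapsto \widetilde{f}$ commutes with $\compTens$, yielding $\widetilde{\gradedF(U_1|V_1) \compTens \gradedF(U_2|V_2)} \simeq \smoothF(U_1 \times U_2)$; together with $|(\{0\},\RR)| = \{0\}$ being terminal in $\cat{Man}$, this shows the body functor sends finite products to finite products.
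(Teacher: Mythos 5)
Your proposal cannot be matched line-by-line against the paper's own argument for a simple reason: the paper gives no proof of this proposition. It defines $\calM_1 \times \calM_2$ on the basis of product opens, remarks that ``with a bit more effort one can show that the product defined is a categorical product,'' and delegates the verification to \cite{NP3}. Your outline is exactly the expected filling-in of that gap, and most of it is sound: the terminal-object check is correct ($\RR$ is initial among unital graded commutative $\RR$-algebras, so the sheaf component of a morphism to $(\{0\},\RR)$ is forced); the identification $\calO_{\calM_1\times\calM_2}(U_1\times U_2) \simeq \gradedF(U_1|V_1)\compTens\gradedF(U_2|V_2)$ via $\grsym(V_1\oplus V_2)\simeq \grsym V_1\otimes \grsym V_2$ and the fundamental isomorphism $\smoothF(U_1\times U_2)\simeq\smoothF(U_1)\compTens\smoothF(U_2)$ of Appendix~\ref{sec:funcan} is correct and is precisely where nuclearity enters; and the body statement does follow by construction, since $(V_1\oplus V_2)_0=\{0\}$ makes the degree-zero reduction of the product model equal to $\smoothF(U_1\times U_2)$.

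There is, however, one genuine soft spot, and it sits exactly in the step you flagged only as ``analytic.'' The paper's definition of a morphism of graded manifolds requires $\phi^\sharp$ to be a sheaf morphism of unital graded algebras, with \emph{no continuity hypothesis}. Your uniqueness argument identifies $\compTens$ as the coproduct in unital graded commutative \emph{Fr\'echet} algebras and extends $a\otimes b \mapsto f_{1\sharp}(a)\,f_{2\sharp}(b)$ by continuity from the dense algebraic tensor product; this pins down $\Phi^\sharp$ only among \emph{continuous} algebra morphisms, so a priori it does not rule out a competing discontinuous sheaf morphism satisfying $\proj_i\circ\Phi=f_i$. To close this you need one of two things: (i) an automatic-continuity result for unital algebra morphisms between these nuclear Fr\'echet local models, or (ii) the standard coordinate-based argument (a graded Hadamard/Taylor lemma, as in \cite{ccf} for supermanifolds and in \cite{NP3} for the $\Z_2^n$ case) showing that any morphism out of $\gradedF(U_1\times U_2|V_1\oplus V_2)$ is already determined by the pullbacks of the coordinate functions, hence by its restriction to the algebraic tensor product. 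Route (ii) is what the cited reference actually carries out, and it also handles the gluing step, which likewise rests on local uniqueness. Finally, note that in the $\ZZ$-graded situation of Remark~\ref{rmk:completion}, where the model involves formal power series, density of the algebraic tensor product and the determination-by-coordinates statement both require extra care; in the polynomial $\NN$-graded setting used in the body of the paper your argument, patched as above, goes through.
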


%\newpage

\section{A note on functional spaces for graded manifolds }
\label{sec:funcan}

\subsection{Motivation}
  Some important work has been done in functional analysis to establish the (weakest possible)
  properties  of functional spaces that still permit to do ``reasonable'' analysis.
  Roughly speaking, the subject is how general one can be in relaxing the hypothesis on the
  considered space of functions and its supporting object, still being able to make sense
  of the usual operations coming from differentiable functions on, say, $\R^n$. This resulted in a series
  of publications/books (starting probably from the fifties), with keywords like \emph{Hilbert}, \emph{Banach}, \emph{Fr\'echet},  \emph{nuclear} spaces...

  The purpose of this appendix is to study the situation for graded manifolds and fit it to the well-established functional analytic framework,
  in order to be able to work in a local charts not bothering about various convergence issues.
  More precisely, we are considering the \emph{local model} for a sheaf of functions on a graded manifold:
  ${\cal C}({\cal U}) = C^{\infty}(U) \compTens S(V)$, where $U \subset \R^n$ is an open set and $V$ is a $\Z$-graded vector space,
  $S(\cdot)$ denotes the sheaf of (graded!) commutative algebras freely generated by $V$.
  We write $\compTens$
  to stress the fact that we consider formal power series (not just polynomials)
  with coefficients in smooth functions on $U$;
  the monomials in these series
 depend on variables defined by $V$, satisfying appropriate commutation relations given by the grading -- all this will be detailed in the sequel.
    We will discuss the topology on this space and show that it behaves nicely with respect to
  usual operations. %, like tensor products.

  The intuition behind is related to several known concepts from classical (non-graded) functional analysis:
  \begin{itemize}[itemsep=-0.1em]
   \item Topology of $C^\infty(M)$ -- smooth functions on a (compact smooth) manifold, or $C^\infty(U)$ -- smooth functions on
   an open subset of $\R^n$
   \item   Fourier analysis, where one constructs the basis on a (segment in a) real line $\R^1$
  but recovers the whole functional space on $R^n$ by completing the tensor product.
  \item Topology of $\R^\infty$ (sometimes denoted by $\R^\omega$ or $\R^{\mathbb N}$) -- the space of real-valued sequences.
  \end{itemize}

 We will also say some words about ``globalizing'' the
  result, i.e. promoting the properties of the sheaf of functions from a local chart to the whole graded manifold.

%  \subsection{Properties  of functions on $\Z$-graded manifolds.}
  \subsection{Local model for $\Z$-graded manifolds}

  Consider a graded manifold ${\cal M} = (M, {\cal O}(M))$,\footnote{Curly letters will usually be related to graded objects, while
  straight letters denote either smooth (non-graded) objects or ingredients of the graded ones.} let us describe locally the sheaf of functions.
  Fix an open chart of $M$: $U \in \R^n$ and decompose the graded vector space $V$ in the following way:
  \begin{equation} \label{sumV}
   V = V^{d_{-l}}_{-l} \oplus V^{d_{-l+1}}_{-l+1}\oplus \dots \oplus V^{d_{-1}}_{-1} \oplus \{0\} \oplus V^{d_{1}}_{1} \oplus \dots \oplus V^{d_{k}}_{k}
  \end{equation}
  We assume the graded manifold to be of \emph{finite degree},  i.e. the maximal/minimal degree of generating elements is bounded and
  this decomposition indeed stops in both directions after a finite number of terms. The subscripts $i$ or $j$ of $V_{\bullet}^{\bullet}$ denote the degree of elements of the respective subspace, and
  the superscript $d_i$ or $d_j$ the dimension of $V_{\bullet}^{\bullet}$. We have two families of indices to distinguish between odd ($i$) and even ($j$)
  degrees, since only the parity of the element (not the degree) plays a role in commutation relations and will make an important difference while describing the elements of ${\cal F}({\cal U})$. Denote  for convenience
  $$ D_1 = d_{-1} + d_{-3} + \dots + d_1 + d_3 + \dots = \sum d_i, \quad
  D_2 = d_{-2} + d_{-4} + \dots + d_2 + d_4 + \dots = \sum d_j $$
   respectively ``odd'' and ``even'' rank of ${\cal M}$. The conceptual difference is that the odd variables ($\xi$'s) are self-anticommuting, and thus square to zero,
   while the even ones ($\eta$'s) are self-commuting and can be raised to arbitrary power.  In this way a function $f \in {\cal F}({\cal U})$ expands
   as a formal power series
   \begin{equation} \label{series}
    f = \sum\limits_{\begin{array}{c}
                     i_1, \dots, i_{D_1} \in \Z_2 \\
                     j_1, \dots, j_{D_2} \in \Z_{\ge 0}
                    \end{array}} f_{i_1\dots i_{D_1}j_1\dots j_{D_2}}(\bm x) \xi_1^{i_1}\dots \xi_{D_1}^{i_{D_1}} \eta_1^{j_1}\dots \eta_{D_2}^{j_{D_2}},
   \end{equation}
   where each coefficient $f_{....}(\bm x)$ is a smooth function of $\bm x\in U \subset \R^n$.
   And the whole functional space ${\cal C}({\cal U})$ morally is
    `` $  \left( C^\infty(U) \right)^{2^{D_1} \cdot |\Z_{\ge 0}|^{D_2}}$ '', that is an infinite (but obviously countable!\footnote{One shows that it is countable by the usual Cantor's diagonal procedure, like  countability of $\mathbb{Q}$.})
    line of smooth functions that are ordered lexicographically by $i_1\dots i_{D_1}j_1\dots j_{D_2}$.
    It is important to note that fixing the expansion (\ref{sumV}), % and is trivial at degree $0$,
     guarantees the uniqueness of (\ref{series}) for any $f \in {\cal C}({\cal U})$.

  \subsection{Topology of $C^\infty(U)$, $\R^{\infty}$ and ${\cal C}$}

  \textbf{1. Fr\'echet.}
Let us recall the usual construction of topology on the space (sheaf) of smooth functions on an open set $C^\infty(U)$ (or on a smooth manifold $M$).
$C^{\infty}(U)$ is an $\R$-linear locally convex topological vector space\footnote{\textbf{Def.} A \emph{topological vector space} %$E$
is a vector space s.t. the linear operations are continuous w.r.t. the chosen topology. It is \emph{locally convex} if any non-empty open set contains a convex open subset.},
with the topology that we are going to define.
Because of the linearity it is sufficient to check all the properties around zero.

For any $f \in C^{\infty}(U)$ define
$
p_{\alpha, K} = \sup\limits_{x\in K} \left| \frac{\displaystyle \partial^{|\alpha|}}{\displaystyle\partial^{\alpha} x} f(x) \right|$,
where $K$ is a compact set, \newline
$\alpha = (\alpha_1, \dots, \alpha_n)$ -- a multi-index to encode partial derivatives.
If $K$ is running over a countable  set of compacts covering $U$, the family $\{p_{\alpha, K}\}$ is a countable (say, indexed by $N \in \mathbb{N}$) family of seminorms\footnote{\textbf{Def.} A seminorm on a vector space
is a  real-valued non-negative functional, s.t. $p(g+h) \le p(g) + p(h)$, $p(ag) = |a|p(g)$ (no non-degeneracy assumed).}.
Those seminorms separate points in $C^{\infty}(U)$, i.e. if $g \neq 0$ there is a at least one $p_{\alpha, K}(g) \neq 0$.
Thus, they define a translation-invariant metric
$$
\rho(g, h) := \sum\limits_{N=1}^{\infty} 2^{-N} \frac{p_N(g - h)}{ 1 + p_N(g-h)}.
$$
$\rho(g, h)$, in turn, defines the topology on $C^{\infty}(U)$, that is $C^{\infty}(U)$ is a \textbf{Fr\'echet space}\footnote{\textbf{Def.}  A Fr\'echet space is a locally convex topological vector space, whose topology is induced by some complete translation invariant metric}. (See for example \cite{KF} for details.)

Moreover, it is a \textbf{Fr\'echet algebra}.\footnote{\textbf{Def.} A Fr\'echet algebra is a Fr\'echet space, s.t. it's topology can be defined by a countable family of
(sub)multiplicative seminorms: $p_N(gh) \le p_N(g)p_N(h)$.}
To show that, we consider a family of seminorms $\tilde p_{i,K} = 2^i \sup \limits_{|\alpha| \le i} p_{\alpha, K}$, which due to rescaling by $2^i$
and the product rule for the derivative become submultiplicative.

We can perform a similar (even simpler) construction for $\R^{\infty}$ -- the space of all real-valued sequences.
For a sequence $s \in \R^{\infty}$, the semi-norm $q_M(s) = \max\limits_{i \le M} |s_i|$,
the topology defined in this way corresponds to element-wise
convergence. One can equivalently take a the sum of absolute values, or for finite families just the absolute value of the $M$-th term.
This is actually an example of a class of spaces called \emph{FK} (\emph{Fr\'echet coordinate}) \emph{spaces}.
This is also a  Fr\'echet algebra -- the simplest one from those described in \cite{dales}: it is automatically closed with respect to
multiplication (formal multiplication of power series), so one needs only to check the submultiplicative property of seminorms.
That is trivially satisfied: $q_M$ does not see the powers greater than $M$, and multiplication increases the power.
In \cite{helemsky} such objects are called \emph{polynormed algebras}.

{Just as a side remark, both of these spaces are not Banach\footnote{\textbf{Def.} A Banach space is a complete normed vector space.}:
the given metrics are not defined from norms.
% For example,  a sequence $s_M$ which has an entry of $1$ at the $M$-th slot and zeros elsewhere,
% does not converge to a null sequence, while $\rho(s_M, null) \to 0$.
But both  are limits of Banach spaces, hence are Fr\'echet.}

We can now consider the space of all functional sequences ${\cal F} = $``$(C^{\infty}(U))^{\infty}$''
(or equivalently formal power series with coefficients in smooth functions)
with the seminorms \newline
$p_{N,M} := \max\limits_{m \le M} p_{N,m}$, where $p_{N,m}$ is $p_N$ as above, applied to the functions in the $m$-th slot of the sequence.
This is again a countable family of seminorms, hence, with the metric $\rho(g, h) := \sum\limits_{N=1}^{\infty}\sum\limits_{M=1}^{\infty} 2^{-N}2^{-M} \frac{p_{N,M}(g - h)}{ 1 + p_{N,M}(g-h)}$, we
prove that ${\cal F}$ is a Fr\'echet space.

With the same reasoning it is a Fr\'echet algebra: the semi-norms are submultiplicative in each term
like for $C^\infty(U)$, and when one has non-zero terms in different slots
they behave like above for $\R^\infty$.

Remark: To be on a safe side from the point of view of functional analysis,
for this whole section we need to assume the \emph{Axiom of countable choice}, to be able to
apply the triangular enumeration for countable number of countable sets.

\textbf{2. Nuclear.}
Let us now consider smooth functions on a product of two open sets $C^{\infty}(U_1\times U_2)$,
clearly this is not the same as $C^{\infty}(U_1)\otimes C^{\infty}(U_2)$ (a function of two variables is not necessarily a product of two functions of one variable).
But the completed tensor product $C^{\infty}(U_1)\compTens C^{\infty}(U_2)$ is actually isomorphic to $C^{\infty}(U_1\times U_2)$.
This property (called \emph{fundamental isomorphism}) can be used as a definition of \emph{nuclear spaces} (\cite{Grothendieck}), and
$C^{\infty}(U)$ is nuclear (as well as $C^{\infty}(M)$).

%\begin{center}
 %\rule{7cm}{0.4pt}
%\end{center}
For the sake of `completeness' let us recall here these topological definitions.
The subtlety is related to the possibility of defining a-priori different topologies on the tensor products
(\cite{Schaefer}). Consider  a vector space $E$ and a family of (locally convex topological) vector spaces $\{E_a, \tau_a\}_{a\in A}$ with linear maps $f_a \colon E \to E_a$
and $g_a \colon E_a \to E$.

\emph{Projective topology $\tau_\pi$} on $E$ is the weakest (coarsest), s.t. all
$f_a$ are continuous. For the base of $\tau_\pi$ around $x\in E$ one takes $\bigcap\limits_{a \in H} f^{-1}_{a}(U_a)$, where $U_a$ are the neighborhoods of
the images $x_a=f_a(x)$, $H$ -- finite subset of $A$.
If $A$ is equipped with a (reflexive, transitive, antisymmetric) relation ``$\le$'' -- partial order -- this permits to
define \emph{projective limits}.\newline
Let $g_{ab} \colon E_b \to E_a$ be continuous linear mappings; $E$ -- subspace of $\prod\limits_a E_a$,
consisting of $x$, s.t. $x_a := f_a(x)$ satisfy $x_a = g_{ab} x_b$ for $a \le b$. $E$ is a \emph{projective limit} of $E_a$, denoted by
$\lim\limits_{\leftarrow}g_{ab}E_b$.

       \emph{Inductive topology $\tau_\iota$}
 is the strongest (finest) one, s.t. all $g_a$ are continuous. In a similar way, the base of this topology is given by all
 (radial, convex, rounded\footnote{Let us not go into details defining those.}) subsets $U\subset E$, s.t. $g_a^{-1}(U)$ are
 neighborhoods of zero in $E_a$. Let, like above, ``$\le$'' be a partial order of indeces, and $h_{ba} \colon E_a \to E_b$  -- continuous linear mappings.
Denote  $F := \bigoplus\limits_{a}E_a$ with $g_a$ -- canonical embeddings of $E_a$ into F,
and $H$ -- a subspace spanned by the images of $E_a$ by $g_a - g_b \circ h_{ba}$, $a \le b$.
If $H/F$ is Hausdorff then it is an \emph{inductive limit} of $\{E_a\}_{a\in A}$ with respect to the mappings $h_{ab}$, denoted by $\lim\limits_{\rightarrow}h_{ab}E_b$. The inductive limit is called \emph{strict} if $\tau_a$ induces $\tau_b$ for $b\le a$.

Facts (from \cite{Schaefer}): \begin{itemize}[itemsep = -0.1em]
        \item A projective limit of a family of locally convex complete vector spaces is a locally convex complete space.
        \item Any complete locally convex vector space $E$ is isomorphic to a projective limit of a family of Banach spaces. One can choose this family
        to be of the same cardinality as a given base of neighborhoods of zero in $E$.
        \item (Corollary) Any Fr\'echet space is isomorphic to a projective limit of Banach spaces; any locally convex space is isomorphic to a subspace of a product of Banach spaces.

    \item  A locally convex direct sum of a family of locally convex spaces is complete iff each of them is complete.

    \item A strict inductive limit of a sequence of complete locally convex spaces is a complete locally convex space.
\end{itemize}

A generic topology that one would define  is somewhere between the projective and the inductive ones.
But in good cases (e.g. for \textbf{nuclear spaces}) there is no ambiguity, since completions with respect to both topologies produce
isomorphic results. There are several ways to define nuclear spaces, establishing isomorphisms between
topologies (like in \cite{Schaefer}); or alternatively (equivalently), one can just ask for the fundamental isomorphism to hold (\cite{Grothendieck}).
Other ways include \cite{gelfand-shilov} -- working with variation bounded functionals, \cite{pietsch} -- with less attention to topological tensor products though, and the list is certainly not exhaustive.

%\begin{center}
 %\rule{7cm}{0.4pt}
%\end{center}

Regardless of the choice (of equivalent) definitions one uses the following facts (\cite{Schaefer})
about nuclear spaces hold true: \\[-2.5em]
\begin{enumerate}[itemsep = -0.1em] %, leftmargin=-0.1em]
 \item Any complete nuclear space is isomorphic to a projective limit of some family of Hilbert\footnote{\textbf{Def.} A Hilbert space is a Banach space
 the norm on which is defined by some positive definite scalar product.} spaces.
 A Fr\'echet space is nuclear iff it can be represented as a projective limit of Hilbert spaces $E  = \lim\limits_{\leftarrow} g_{mn} H_n$,
 s.t. $g_{mn}$ are nuclear maps\footnote{\label{nuclmap}The axiomatic definition of a nuclear map between two linear spaces $E$ and $F$
 is a bit technical (see again \cite{Schaefer}), but it amounts to the following description: A linear map $u \colon E\to F$ is nuclear iff it is of the form  %\newline
 $u(x) = \sum\limits_{n=1}^\infty \lambda_b f_n(x) y_n = \sum\limits_{n=1}^\infty \lambda_b f_n \otimes y_n$, where  $\sum\lambda_n$ is an absolutely converging series, $f_n$ is an equicontinuous sequence in
 $E^*$, $y_n$ is a sequence contained in a convex rounded and bounded subset $B \subset U$, s.t. $F_B$ is complete.
 ($F_B := \bigcup\limits_{n = 1}^\infty nB$, with the Minkowski functional as a norm)}  for $m<n$.
 \item (Theorem) Any subspace and any separated quotient space of a nuclear space is nuclear.  A product of any family of nuclear spaces is nuclear,
 a locally convex direct sum of a countable family of nuclear spaces is nuclear.
 \item (Corollary) Projective limit of any family of nuclear spaces is nuclear.
 \item (Corollary) Inductive limit of a countable family of nuclear spaces is nuclear.
 \end{enumerate}

These properties (especially 2.) are already more than sufficient to say that the space of functional sequences -- ${\cal F}$ from above is nuclear, since it is
a limit of a countable family $\{ (C^{\infty}(U))^N \}_{N\in\mathbb{N}}$ with obvious embeddings of $(C^{\infty}(U))^N \subset (C^{\infty}(U))^{N+1}$,
or a product of a family of a countable number of copies of $C^{\infty}(U)$.

Alternatively, one can do it ``by hand'': $C^{\infty}(U_1)\hat\otimes C^{\infty}(U_2) \cong C^{\infty}(U_1\times U_2)$ in each term of the sequence,
and the terms do not interact, i.e. this is true for the whole space of sequences.
This is roughly speaking the idea of the proof of a part of item 2: one uses the form of the nuclear map
given in the footnote \ref{nuclmap}, then introduces a second index responsible for the number of the term of a sequence and checks
that the desired properties of this sequence are satisfied.

 %\newpage

\subsection{Application to $\Z$-graded manifolds.}
As described above the local model of the space of functions on a graded manifold after fixing the structure of the graded vector space reduces
to a sequence of smooth functions on an open set, which in view of the previous section is a Fr\'echet nuclear space.
It is even a Fr\'echet algebra for the same argument as in \cite{dales}: as soon as the (lexicographical)
order is fixed for the monomials in the series, the multiplication follows the same logic as for ordinary
power series.
% \VS{Is there any subtlety with negative gradings in  (\ref{sumV})?}

And all this is again visible ``by hand''.
For instance concerning nuclearity, consider the product of two graded manifolds: ${\cal M}\times\tilde{\cal M}$, with
$(M, V)$, $(\tilde M, \tilde V)$ as before. For the explicit expression of $f\in {\cal F}({\cal U} \times \tilde{\cal U})$
one fixes again some order of powers of elements in  $V\times \tilde V$, that produces strings like
$i_1\dots i_{D_1+\tilde D_1}j_1\dots j_{D_2+\tilde D_2}$.
Since there is no need to make it canonically, one can fix an appropriate basis of $V\times \tilde V$, induced by the bases of
$V$ and $\tilde V$. Hence those strings can be naturally decoupled to
$i_1\dots i_{D_1}\tilde i_1\dots \tilde i_{\tilde D_1}j_1\dots j_{D_2}\tilde j_1\dots \tilde j_{\tilde D_2}$, giving
$i_1\dots i_{D_1}j_1\dots j_{D_2}$
and $\tilde i_1\dots \tilde i_{\tilde D_1}\tilde j_1\dots \tilde j_{\tilde D_2}$. This reduces the fundamental isomorphism problem  to
(countably many) independent $C^{\infty}(U)\compTens C^{\infty}(\tilde U) \cong C^{\infty}(U\times \tilde U)$.

\begin{rmk}
Degression to $\Z_2^n$-graded manifolds:
a careful treatment of the above issues in the case of $\Z_2^n$-graded manifolds can be found in \cite{NP2}.
\end{rmk}

%The same logic applies to the case of $\Z_2^m$-graded manifolds: one needs to remove negatively graded subspaces in the expansion (\ref{sumV}) of $V$,  and
%replace positively graded by multigraded. After computing the parity appearing in the commutation relations one obtains literally the same expression
%as (\ref{series}), modulo an obvious remark on Fr\'echet algebras.

%\subsection{Globalization}
%To go from local picture $({\cal U}, {\cal O}_{\cal U})$ to the whole graded manifold $({\cal M}, {\cal O}_{\cal M})$, there is
%apparently no problem. But to make things clean
%one may want to go to some sheaf-theoretic arguments. Relevant references here are  \cite{schapira_lec} and \cite{cat_sh}
%chapter 19: about stacks and statements like local representability for good assumptions implies global representability.

\newpage
%%%%%%%%%%%%%%%%%%%%%%%%%%%%%%%%%%%%%%%%%%%%%%%%%%%%%%%%%%%%%
%%%%%%%%%%%%%%%%%%%%%%%%%%%%%%%%%%%%%%%%%%%%%%%%%%%%%%%%%%%%%
%%%%%%%%%%%%%%%%%%%%%%%%%%%%%%%%%%%%%%%%%%%%%%%%%%%%%%%%%%%%%
\begin{bibdiv}
\begin{biblist}

\bibitem[CF01]{catFel} {Alberto S. Cattaneo and Giovanni Felder}, {\it Poisson sigma models and symplectic groupoids}, {Quantization of singular symplectic quotients}, {Progr. Math.} {\bf 198} (2001), {61-93}, {Birkh\"auser}, {Basel}.
\bibitem[Cov10]{covez} {S. Covez}, {\it The local integration of {L}eibniz algebras}, preprint {\tt arXiv: {1011.4112}}.
\bibitem[CF03]{craiFer} {Marius Crainic and Rui Loja Fernandes}, {\it Integrability of {L}ie brackets}, {Ann. of Math. (2)} {\bf 157 (2)} ({2003}), {575-620}.

\bibitem[EK64]{vanKort} {W. T. van Est and Th. J. Korthagen}, {\it Non-enlargible {L}ie algebras}, {Indag. Math.} {\bf 26} (1964), {15-31}.
\bibitem[Get09]{Get09} E. {Getzler}, {\it Lie theory for nilpotent {$L_\infty$}-algebras}, Ann. of Math. (2) {\bf 170 (1)} (2009), {271-301}.
\bibitem[Hen08]{henriques} {Andr{\'e} Henriques}, {\it Integrating {$L_\infty$}-algebras}, {Compos. Math.} {\bf 144 (4)} ({2008}), {1017-1045}.
\bibitem[\v{S}ev05]{severa} {Pavol {\v{S}}evera}, {\it Some title containing the words ``homotopy'' and ``symplectic'', e.g. this one}, Trav. Math. {\bf XVI} (2005), {121-137}.
\bibitem[SZ12]{shengZhu} {Yunhe Sheng and Chenchang Zhu}, {\it Integration of {L}ie 2-algebras and their morphisms}, {Lett. Math. Phys.} {\bf 102 (2)} (2012), {223-244}.

\bibitem[TZ06]{tsengZhu} {Hsian-Hua Tseng and Chenchang Zhu}, {\it Integrating {L}ie algebroids via stacks}, {Compos. Math.} {\bf 142 (1)} ({2006}), {251-270}.
\bibitem[Wei04]{wein} {Alan D. Weinstein}, {\it Integrating the nonintegrable}, {Feuilletages et quantification g\'eom\'etrique: textes des journe\'ees d'\'etude des 16 et 17 octobre 2003}, {Documents de travail (Equipe F2DS)}, {Fondation Maison des Sciences de l'Homme}, {Paris, France}, ({2004}).
\bibitem[WZ12]{wockZhu} {C. Wockel and C. Zhu}, {\it Integrating central extensions of {L}ie algebras via {L}ie 2-groups}, preprint {\tt arXiv: 1204.5583}.

\bib{bou}{book}{
   author={Bourbaki, N.},
   title={\'El\'ements de math\'ematique. Alg\`ebre. Chapitres 1 \`a 3},
   language={French},
   publisher={Hermann},
   place={Paris},
   date={1970},
   pages={xiii+635 pp. (not consecutively paged)},
   review={\MR{0274237 (43 \#2)}},
}

\bib{evt}{book}{
   author={Bourbaki, Nicolas},
   title={Espaces vectoriels topologiques. Chapitres 1 \`a 5},
   language={French},
   edition={New edition},
   note={\'El\'ements de math\'ematique. [Elements of mathematics]},
   publisher={Masson},
   place={Paris},
   date={1981},
   pages={vii+368},
   isbn={2-225-68410-3},
   review={\MR{633754 (83k:46003)}},
}

\bib{evt-english}{book}{
   author={Bourbaki, N.},
   title={Topological vector spaces. Chapters 1--5},
   series={Elements of Mathematics (Berlin)},
   note={Translated from the French by H. G. Eggleston and S. Madan},
   publisher={Springer-Verlag},
   place={Berlin},
   date={1987},
   pages={viii+364},
   isbn={3-540-13627-4},
   review={\MR{910295 (88g:46002)}},
}

\bib{ccf}{book}{
   author={Carmeli, Claudio},
   author={Caston, Lauren},
   author={Fioresi, Rita},
   title={Mathematical foundations of supersymmetry},
   series={EMS Series of Lectures in Mathematics},
   publisher={European Mathematical Society (EMS), Z\"urich},
   date={2011},
   pages={xiv+287},
   isbn={978-3-03719-097-5},
   review={\MR{2840967 (2012h:58010)}},
   doi={10.4171/097},
}

\bib{cattschatz}{article}{
   author={Cattaneo, Alberto S.},
   author={Sch{\"a}tz, Florian},
   title={Introduction to supergeometry},
   journal={Rev. Math. Phys.},
   volume={23},
   date={2011},
   number={6},
   pages={669--690},
   issn={0129-055X},
   review={\MR{2819233 (2012k:58006)}},
   doi={10.1142/S0129055X11004400},
}

\bib{chari}{book}{
   author={Chari, Vyjayanthi},
   author={Pressley, Andrew},
   title={A guide to quantum groups},
   note={Corrected reprint of the 1994 original},
   publisher={Cambridge University Press},
   place={Cambridge},
   date={1995},
   pages={xvi+651},
   isbn={0-521-55884-0},
   review={\MR{1358358 (96h:17014)}},
}

\bib{resume}{article}{
   author={Grothendieck, A.},
   title={R\'esum\'e des r\'esultats essentiels dans la th\'eorie des
   produits tensoriels topologiques et des espaces nucl\'eaires},
   language={French},
   journal={Ann. Inst. Fourier Grenoble},
   volume={4},
   date={1952},
   pages={73--112 (1954)},
   issn={0373-0956},
   review={\MR{0061754 (15,879b)}},
}

\bib{kass}{book}{
   author={Kassel, Christian},
   title={Quantum groups},
   series={Graduate Texts in Mathematics},
   volume={155},
   publisher={Springer-Verlag},
   place={New York},
   date={1995},
   pages={xii+531},
   isbn={0-387-94370-6},
   review={\MR{1321145 (96e:17041)}},
   doi={10.1007/978-1-4612-0783-2},
}

\bib{NP6}{article}{
author={Bonavolont\`a, Giuseppe}, author={Poncin, Norbert},
title={On the category of Lie $n$-algebroids},
journal={J. Geo. and Phys.}
volume={73}
date={2013},
pages={70--90},
}

\bib{KPQ14}{article}{
AUTHOR = {Khudaverdyan, David}, author={Poncin, Norbert}, author={Qiu, Jian},
     TITLE = {On the infinity category of homotopy {L}eibniz algebras},
   JOURNAL = {Theory Appl. Categ.},
    VOLUME = {29},
      YEAR = {2014},
     PAGES = {No. 12, 332--370}
}

\bib{kostant}{article}{
   author={Kostant, Bertram},
   title={Graded manifolds, graded Lie theory, and prequantization},
   conference={
      title={Differential geometrical methods in mathematical physics},
      address={Proc. Sympos., Univ. Bonn, Bonn},
      date={1975},
   },
   book={
      publisher={Springer, Berlin},
   },
   date={1977},
   pages={177--306. Lecture Notes in Math., Vol. 570},
   review={\MR{0580292 (58 \#28326)}},
}

\bib{koszul}{article}{
   author={Koszul, J.-L.},
   title={Graded manifolds and graded Lie algebras},
   conference={
      title={Proceedings of the international meeting on geometry and
      physics },
      address={Florence},
      date={1982},
   },
   book={
      publisher={Pitagora, Bologna},
   },
   date={1983},
   pages={71--84},
   review={\MR{760837 (85m:58019)}},
}

\bib{treves}{book}{
   author={Tr{\`e}ves, Fran{\c{c}}ois},
   title={Topological vector spaces, distributions and kernels},
   publisher={Academic Press},
   place={New York},
   date={1967},
   pages={xvi+624},
   review={\MR{0225131 (37 \#726)}},
}

\bib{vish}{article}{
   author={Vishnyakova, E. G.},
   title={On complex Lie supergroups and split homogeneous supermanifolds},
   journal={Transform. Groups},
   volume={16},
   date={2011},
   number={1},
   pages={265--285},
   issn={1083-4362},
   review={\MR{2785503 (2012b:58010)}},
   doi={10.1007/s00031-010-9114-5},
}

\bib{NP2}{article}{
author={Bruce, Andrew J.}, author={Poncin, Norbert},
title={Functional analytic issues in $\Z_2^n$-Geometry},
journal={Revista de la Uni\'on Matem\'atica Argentina},
date={2019}
}

\bib{NP3}{article}{
author={Bruce, Andrew J.}, author={Poncin, Norbert},
title={Products in the category of $\Z_2^n$-manifolds},
journal={J. Nonlin. Math. Phys.},
volume={26(3)},
date={2019},
}

\bib{KF}{book}
{
  author={Kolmogorov, A.N.},
  author={Fomin, S.V.},
  title={Elementy teorii funktsiy i funktsional'nogo analiza / Elements of the Theory of Functions and Functional Analysis},
  publisher={Moscow, Nauka/ Dover Books on Mathematics},
  date={1976/1999},
}

\bib{rudin}{book}
{
  author={Rudin, W.},
  title={Functional analysis},
  publisher={ McGraw-Hill Education},
  date={1973},
}

\bib{Grothendieck}{article}
{
  author={Grothendieck, A.},
  title={Produits tensoriels topologiques et espaces nucl\'eaires},
  journal={Mem. Amer. Math. Soc., no. 16A / S\'eminaire Bourbaki, D\'ec 1952},
  date={1955/1952},
}

\bib{Schaefer}{book}
{
  author={Schaefer, H.H.},
  title={Topological Vector Spaces},
  publisher={Macmillan},
  date={1966},
}

\bib{dales}{article}
{
    author={Dales, H.G.},
    author={Patel, S.R.},
    author={Read, C.J.},
    title={Fr\'echet algebras of power series},
    journal={Banach Center Publications, Institute of Mathematics, Polish Academy of Sciences, Warsaw, 91},
    date={2010},
}

\bib{helemsky}{book}
{
  author={Helemskii, A.Ya.},
  title={Banach and Locally Convex Algebras},
  publisher={Oxford},
  date={1993},
}

\bib{gelfand-shilov}{book}
{
   author={Gelfand, I.M.},
   author={Shilov, G.E.},
    title={Some questions of the theory of differential equations -- part 3 -- Distributions},
    publisher={FizMatLit},
    date={1958},
}

\bib{pietsch}{book}
{
  author={Pietsch, A.},
  title={Nukleare Lokalkonvexe Raume},
  publisher={Berlin},
  date={1965},
}

\bib{schapira_lec}{article}
{
  author={Schapira, P.},
  title={Algebra and Topology, lecture notes},   %Categories and homological algebra, lecture notes,
  url={http://www.math.jussieu.fr/$\sim$schapira/lectnotes}
}

\bib{cat_sh}{book}
{
  author={Kashivara, M.},
  author={Schapira, P.},
  title={Categories and Sheaves},
  publisher={Springer},
  date={2005},
}

%%%%%%%%%%%%%%%%%  new bibliography %%%%%%%

\bib{KS07}{article}{
   author={Kotov, A.},
   author={Strobl, T.},
   title={Characteristic classes associated to Q-bundles},
   journal={International Journal of Geometric Methods in Modern Physics},
   volume={12},
   date={2015},
   number={1},
   pages={1550006, 26},
   issn={0219-8878},
  % review={\MR{2785503 (2012b:58010)}},
   doi = {10.1142/S0219887815500061},
   URL = {https://doi.org/10.1142/S0219887815500061},
}

\bib{Kotov2018}{article}{
   author={Kotov et al, A.},
   title={DG Lie groups and characteristic classes},
   journal={In preparation},
   year={2019}
}

\bib{Salerno2010}{report}{
      author={Kotov, A.},
      title={Superconnections and Characteristic classes},
      date={2010},
      organization={XI Current Geometry},
      place={Vietri sul Mare (Salerno), Italy}
}

\bib{FHT2001}{book}{
   author={Felix, Yves},
   author={Halperin, Steve},
   author={Thomas, Jean-Claude},
   title={Rational Homotopy Theory},
   series={Graduate Texts in Mathematics},
   volume={205},
   publisher={Springer-Verlag},
   place={New York},
   date={2001},
   pages={XXXIII, 539},
   doi={10.1007/978-1-4613-0105-9},
   }

\bib{KSS14}{article}
{
    AUTHOR = {Kotov, Alexei },
        AUTHOR = {Salnikov, Vladimir},
            AUTHOR = {Strobl, Thomas },
     TITLE = {2d gauge theories and generalized geometry},
   JOURNAL = {J. High Energy Phys.},
      YEAR = {2014},
    NUMBER = {8},
     PAGES = {021, front matter+21},
      ISSN = {1126-6708},
}
		
\bib{SalStr13}{article}
{
 AUTHOR = {Salnikov, Vladimir},
 AUTHOR= {Strobl, Thomas},
     TITLE = {Dirac Sigma Models from Gauging},
   JOURNAL = {J. High Energy Phys.},
      YEAR = {2013},
    NUMBER = {11},
    }

\bib{Sal15}{article}
{
    AUTHOR = {Salnikov, Vladimir},
     TITLE = {Graded geometry in gauge theories and beyond},
   JOURNAL = {J. Geom. Phys.},
    VOLUME = {87},
      YEAR = {2015},
     PAGES = {422--431},
      ISSN = {0393-0440},
       DOI = {10.1016/j.geomphys.2014.07.001},
       URL = {https://doi.org/10.1016/j.geomphys.2014.07.001},
}

\bib{YKSbig}{article}
{
  	 AUTHOR = {Kosmann-Schwarzbach, Yvette},
     TITLE = {Grand crochet, crochets de {S}chouten et cohomologies d'alg\`ebres de {L}ie},
   JOURNAL = {C. R. Acad. Sci. Paris S\'er. I Math.},
    VOLUME = {312},
      YEAR = {1991},
    NUMBER = {1},
     PAGES = {123--126},
      ISSN = {0764-4442},
}

\bib{KPS}{article}
{
    AUTHOR = {Kotov, Alexei },
           AUTHOR = {Poncin, Norbert},
        AUTHOR = {Salnikov, Vladimir},
     TITLE = {On the structure of graded Lie groups},
   JOURNAL = {In preparation},
      YEAR = {2019},
}

\bib{shafar}{book}
{   AUTHOR = {Shafarevich, Igor R.},
     TITLE = {Basic algebraic geometry. 1},
   EDITION = {Third},
   EDITION = {Russian},
      NOTE = {Varieties in projective space},
 PUBLISHER = {Springer, Heidelberg},
      YEAR = {2013},
 }

\bib{NP1}{article}{
author={Poncin, Norbert},
title={Towards integration on colored supermanifolds},
journal={Banach Center Publ.},
volume={110},
date={2016},
pages={201--217},
}

\bib{NP4}{article}{
author={Covolo, Tiffany}, author={Grabowski, Janusz}, author={Poncin, Norbert},
title={The category of $\Z_{2}^{n}$-supermanifolds},
journal={J. Math. Phys.},
volume={57(7)},
date={2016},
}

\bib{NP5}{article}{
author={Covolo, Tiffany}, author={Ovsienko, Valentin}, author={Poncin, Norbert},
title={Higher trace and Berezinian of matrices over a Clifford algebra},
journal={J. Geo. and Phys.}
volume={62(11)}
date={2012},
pages={2294--2319},
}

\bib{felix}{book}
{    AUTHOR = {F\'{e}lix, Yves},
     AUTHOR = {Halperin, Stephen},
     AUTHOR= {Thomas, Jean-Claude},
     TITLE = {Rational homotopy theory},
    SERIES = {Graduate Texts in Mathematics},
    VOLUME = {205},
 PUBLISHER = {Springer-Verlag, New York},
      YEAR = {2001},
       DOI = {10.1007/978-1-4613-0105-9},
 }

\end{biblist}
\end{bibdiv}
\end{document}